\pgfplotsset{compat=1.15}
\newcounter{test}
\newtheorem{thm}{Theorem}[test]
\newtheorem{theoreme}{Théorème}[section]
\newtheorem{theorem}[theoreme]{Theorem}
\newtheorem{prop-f}[theoreme]{Proposition}
\newtheorem{prop}[theoreme]{Proposition}
\newtheorem{lemma}[theoreme]{Lemma}
\newtheorem{definition}[theoreme]{Definition}
\newtheorem{remk}[theoreme]{Remark}
\newcommand{\E}{\mathbb{E}}
\newcommand{\N}{\mathbb{N}}
\renewcommand{\P}{\mathbb{P}}
\newcommand{\R}{\mathbb{R}}
\newcommand{\Z}{\mathbb{Z}}
\renewcommand{\l}{\ell}
\renewcommand{\lll}{\ell^\Lambda}
\renewcommand{\L}{\mathcal{L}}
\newcommand{\Lt}{\tilde{\mathcal{L}}}
\newcommand{\cA}{\mathcal{A}}
\newcommand{\cB}{\mathcal{B}}
\newcommand{\cC}{\mathcal{C}}
\newcommand{\cE}{\mathcal{E}}
\newcommand{\cF}{\mathcal{F}}
\newcommand{\cG}{\mathcal{G}}
\newcommand{\cI}{\mathcal{I}}
\newcommand{\cM}{\mathcal{M}}
\newcommand{\cN}{\mathcal{N}}
\newcommand{\cP}{\mathcal{P}}
\newcommand{\cS}{\mathcal{S}}
\newcommand{\cT}{\mathcal{T}}
\newcommand{\cV}{\mathcal{V}}
\newcommand{\OG}{\overline{\gamma}}
\newcommand{\gu}{\gamma^{*}}
\newcommand{\Tu}{T^{*}}
\newcommand{\ogu}{\overline{\gamma}^{*}}
\newcommand{\fpm}{\vec{\pi}}
\newcommand{\Emid}{E^{*}_{\text{mid}}}
\newcommand{\Einf}{E^{*}_\infty}
\newcommand{\Emod}{E^*_{\text{modif}}}
\newcommand{\Ep}{E^*_+}
\newcommand{\Em}{E^*_-}
\newcommand{\Bus}{B_{1,s,N}}
\newcommand{\Bds}{B_{2,s,N}}
\newcommand{\Bts}{B_{3,s,N}}
\newcommand{\Buz}{B_{1,0,N}}
\newcommand{\Bdz}{B_{2,0,N}}
\newcommand{\Btz}{B_{3,0,N}}
\newcommand{\ulm}{u^\Lambda}
\newcommand{\vlm}{v^\Lambda}
\renewcommand{\r}{t_{\min}}
\newcommand{\Kpar}{K}
\newcommand{\good}{\cG^k}
\newcommand{\tpi}{\tilde{\pi}}
\renewcommand{\1}{\mathds{1}}
\renewcommand{\1}{\mathds{1}}
\renewcommand{\epsilon}{\varepsilon}
\renewcommand{\phi}{\varphi}
\definecolor{qqwuqq}{rgb}{0,0.39215686274509803,0}
\definecolor{ccqqqq}{rgb}{0.8,0,0}
\numberwithin{equation}{section}
\newcounter{numeroexo}
\begin{document}
	
	\title{Geodesics cross any pattern in first-passage percolation without any moment assumption and with possibly infinite passage times}
	\author{Antonin Jacquet\footnote{Institut Denis Poisson, UMR-CNRS 7013, Université de Tours, antonin.jacquet@univ-tours.fr}}
	\date{}
	\maketitle
	\begin{abstract}
		In first-passage percolation, one places nonnegative i.i.d.\ random variables ($T (e)$) on the edges of $\Z^d$. A geodesic is an optimal path for the passage times $T(e)$. 
		Consider a local property of the time environment. We call it a pattern. We investigate the number of times a geodesic crosses a translate of this pattern.
		When we assume that the common distribution of the passage times satisfies a suitable moment assumption, it is shown in [Antonin Jacquet. Geodesics in first-passage percolation cross any pattern, arXiv:2204.02021, 2023] that, apart from an event with exponentially small probability, this number is linear in the distance between the extremities of the geodesic. 
		This paper completes this study by showing that this result remains true when we consider distributions with an unbounded support without any moment assumption or distributions with possibly infinite passage times.
		The techniques of proof differ from the preceding article and rely on a notion of penalized geodesic.
		%{\color{blue} ANCIENNE DEUXIEME PROPOSITION VOUEE A DISPARAITRE.
		%In first-passage percolation, one places nonnegative i.i.d.\ random variables ($T (e)$) on the edges of $\Z^d$, taking values in $[0,\infty]$. In particular, we do not exclude the case where the common distribution of the passage times $T(e)$ has an infinite atom. In this model, a geodesic is an optimal path for these passage times $T(e)$. The aim of this paper is to complete the study of \cite{Jacq} {\color{blue} ou [Antonin Jacquet. Goedesics in first-passage percolation cross any pattern, 2023] à la manière de Nakajima mais c'est bizarre de mettre mon nom, je ne connais pas l'usage}. In this paper {\color{blue} ou in \cite{Jacq}, j'utilise deux fois "in this paper" en ne parlant pas du même article}, it is assumed that the common distribution has a bounded support or has an unbounded support with a moment assumption. In particular, the common distribution does not have an infinite atom. Consider a local property of the time environment. We call it a pattern. It is shown that, apart from an event with exponentially small probability, the number of times a geodesic crosses a translate of this pattern is linear in the distance between the extremities of the geodesic. Here, we prove that this result remains true when we consider distributions with an unbounded support without any moment assumption or distributions with an infinite atom.}
	\end{abstract}
	
	%\tableofcontents
	
	%\newpage

	\section{Introduction}
	
	\subsection{Settings}\label{Sous-section Settings.}
	
	Fix an integer $d \ge 2$. In this paper, we consider first passage percolation on the hypercubic lattice $\Z^d$. We denote by $0$ the origin of $\Z^d$ and by $\cE$ the set of edges in this lattice. The edges in $\cE$ are those connecting two vertices $x$ and $y$ such that $\|x-y\|_1=1$. The basic random object consists of a family $T=\{T(e) \, : \, e \in \cE\}$ of i.i.d.\ random variables taking values in $[0,\infty]$ and defined on a probability space $(\Omega,\cF,\P)$. The random variable $T(e)$ represents the passage time of the edge $e$. Their common distribution is denoted by $\L$.
	
	A finite path $\pi=(x_0,\dots,x_k)$ is a sequence of adjacent vertices of $\Z^d$, i.e.\ for all $i=0,\dots,k-1$, $\|x_{i+1}-x_i\|_1=1$. We say that $\pi$ goes from $x_0$ to $x_k$. Sometimes we identify a path with the sequence of edges it visits, writing $\pi=(e_1,...,e_k)$ where for $i=1,\dots,k$, $e_i=\{x_{i-1},x_i\}$. We say that $k$ is the length of $\pi$ and we denote $|\pi|=k$. The passage time $T(\pi)$ of a path $\pi=(e_1,\dots,e_k)$ is the sum of the variables $T(e_i)$ for $i=1,\dots,k$. 
	
	We do not exclude the case $\L(\infty) > 0$.
	In this case, there are vertices between which all paths have an infinite passage time.
	Thus, we define the following random set:
	\[\mathfrak{C} = \{(x,y) \in \Z^d \times \Z^d \, : \, \exists \text{ a path $\pi$ from $x$ to $y$ such that $T(\pi)<\infty$}\}.\]
	Throughout the article, we assume that 
	\begin{equation}
		\L([0,\infty))> p_c, \label{eq: hypothèse percolation surcritique des temps finis.}
	\end{equation}
	where $p_c$ denotes the critical probability for Bernoulli bond percolation model on $\Z^d$.
	We refer to \cite{Grimmett} for background on percolation. Say that an edge $e$ is open if its passage time $T(e)$ is finite and closed otherwise. Thanks to \eqref{eq: hypothèse percolation surcritique des temps finis.}, this percolation model is supercritical. Therefore there exists a unique infinite component which we denote by $\cC_\infty$.
	When $\L(\infty)=0$, note that every couple of vertices belongs to $\mathfrak{C}$ and that $\cC_\infty$ is equal to the whole graph.
	
	Now, for two vertices $x$ and $y$, we define the geodesic time
	\begin{equation}
		t(x,y)= \inf \{T(\pi) \, : \, \pi \mbox{ is a path from $x$ to $y$} \}. \label{Définition geodesic time.}
	\end{equation}
	Note that $\mathfrak{C} = \{(x,y) \in \Z^d \times \Z^d \, : \, \text{$t(x,y)$ is finite}\}$.
	%Note that the geodesic time between every pair of vertices belonging to $\mathfrak{C}$ is finite when $\L(\infty)>0$.
	A self-avoiding path $\gamma$ from $x$ to $y$ such that $T(\gamma)=t(x,y)$ is called a geodesic between $x$ and $y$.
	
	For the following and for the existence of geodesics, we need some assumptions on $\L$. Let $\r$ denote the minimum of the support of $\L$. We extend a definition introduced in \cite{VdBK}. A distribution $\L$ with support in $[0,\infty]$ is called useful if the following holds: 
	\begin{equation}
		\begin{split}
		\L(\r) < p_c & \mbox{ when } \r=0, \\
		\L(\r) < \overrightarrow{p_c} & \mbox{ when } \r>0,
		\end{split}\label{eq: loi useful.}
	\end{equation}
	where $p_c$ has been introduced above and where $\overrightarrow{p_c}$ is the critical probability for oriented Bernoulli bond percolation on $\Z^d$ (see Section 12.8 in \cite{Grimmett}). Throughout the article, we also assume that $\L$ is useful. 
	
	%In the whole article, we assume that $\L$ has support in $[0,\infty]$, is useful, and that one of the following two conditions is satisfied:
	%\begin{equation}
	%	\begin{split}
	%		& \L(\infty) > 0 \text{ and } \L([0,\infty))>p_c, \\
	%		\text{or } & \L(\infty) = 0 \text{ and the support of $\L$ is unbounded.}
	%	\end{split}\label{eq: Hypothèses sur la loi.}
	%\end{equation}
	Geodesics between any pair of vertices belonging to $\mathfrak{C}$ exist with probability one. This is Proposition 4.4 in \cite{50years} when $\L(\infty)=0$ and Proposition \ref{prop: Proposition annexe existence des géodésiques.} in Appendix \ref{Annexe sur l'existence des géodésiques avec des arêtes infinies.} when $\L(\infty)>0$. Thus, geodesics between any pair of vertices belonging to $\cC_\infty$ exist with probability one.
	
	\subsection{Patterns}
	
	For a set $B$ of vertices, we denote by $\partial B$ its boundary, this is the set of vertices which are in $B$ and which are linked by an edge to a vertex which is not in $B$. We make an abuse of notation by saying that an edge $e=\{u,v\}$ belongs to a set of vertices if $u$ and $v$ are in this set.
	
	Let $L_1,\dots,L_d$ be non-negative integers. To avoid trivialities we assume that at least one of them is positive. We fix $\displaystyle \Lambda=\prod_{i=1}^d \{0,\dots,L_i\}$ and two distinct vertices $\ulm$ and $\vlm$ on the boundary of $\Lambda$. These points $\ulm$ and $\vlm$ are called endpoints. Then we fix an event $\cA^\Lambda$, with positive probability, only depending on the passage time of the edges of $\Lambda$. We say that $\mathfrak{P}=(\Lambda,u^\Lambda,v^\Lambda,\cA^\Lambda)$ is a pattern.
	Let $x \in \Z^d$. Define:
	\begin{itemize}
		\item for $y \in \Z^d$, $\theta_x y = y-x$,
		\item for $e=\{u,v\}$ an edge connecting two vertices $u$ and $v$, $\theta_x e = \{\theta_x u, \theta_x v\}$.
	\end{itemize}
	
	Similarly, if $\pi=(x_0,\dots,x_k)$ is a path, we define $\theta_x \pi=(\theta_x x_0,\dots, \theta_x x_k)$. Then $\theta_x T$ denotes the environment $T$ translated by $-x$, i.e. the family of random variables indexed by the edges of $\Z^d$ defined for all $e \in \cE$ by 
	\[
	\left(\theta_x T\right) (e) = T \left( \theta_{-x} e \right).
	\]
	
	Let $\pi$ be a self-avoiding path and $x \in \Z^d$. We say that $x$ satisfies the condition $(\pi ; \mathfrak{P})$ if these two conditions are satisfied:
	\begin{enumerate}
		\item $\theta_x \pi$ visits $u^\Lambda$ and $v^\Lambda$, and the subpath of $\theta_x \pi$ between $u^\Lambda$ and $v^\Lambda$ is entirely contained in $\Lambda$,
		\item $\theta_x T \in \cA^\Lambda$.
	\end{enumerate}
	Note that, if $x$ satisfies the condition $(\pi ; \mathfrak{P})$ when $\pi$ is a geodesic, then the subpath of $\theta_x \pi$ between $u^\Lambda$ and $v^\Lambda$ is one of the optimal paths from $u^\Lambda$ to $v^\Lambda$ entirely contained in $\Lambda$ in the environment $\theta_x T$.	
	When the pattern is given, we also say "$\pi$ takes the pattern in $\theta_{-x}\Lambda$" for "$x$ satisfies the condition $(\pi;\mathfrak{P})$".
	We denote:
	\begin{equation}
		N^\mathfrak{P}(\pi)=\sum_{x \in \Z^d} \1_{\{x \text{ satisfies the condition }(\pi ; \mathfrak{P})\}}. \label{Compteur nombre de motifs empruntés introduction.}	
	\end{equation}
	Note that the number of terms in this sum is actually bounded from above by the number of vertices in $\pi$. If $N^\mathfrak{P}(\pi) \ge 1$, we say that $\pi$ takes the pattern. The aim of the article is to investigate, under reasonable conditions on $\mathfrak{P}$, the behavior of $N^\mathfrak{P}(\gamma)$ for all geodesics $\gamma$ from $0$ to $x$ with $\| x \|_1$ large. The first step is to determine these reasonable conditions, that is why we define the notion of valid patterns. 
	
	\begin{definition}
		Denote by $\{\epsilon_1,\dots,\epsilon_d\}$ the vectors of the canonical basis. An external normal unit vector associated to a vertex $z$ of the boundary of $\Lambda$ is an element $\alpha$ of the set $\{\pm\epsilon_1,\dots,\pm\epsilon_d\}$ such that $z+\alpha$ does not belong to $\Lambda$.
	\end{definition}
	
	\begin{definition}\label{Définition motif valable.}
		We say that a pattern is valid if the following three conditions hold:
		\begin{itemize}
			\item $\cA^\Lambda$ has a positive probability,
			\item when $\cA^\Lambda$ occurs, there exists a path between the two endpoints, entirely contained in $\Lambda$, whose passage time is finite,
			\item one of the following two conditions holds:
			\begin{itemize}
				\item the support of $\L$ is unbounded,
				\item there exist two distinct external normal unit vectors, one associated with $\ulm$ and one associated with $\vlm$.
			\end{itemize}
		\end{itemize}
	\end{definition}

	\begin{remk}
		\,
		\begin{itemize}
			\item The second condition is always satisfied when $\L(\infty)=0$.
			\item The existence of the two distinct vectors in the third condition of Definition \ref{Définition motif valable.} is equivalent to the fact that the endpoints of the pattern belong to two different faces. As explained in Remark 1.3 in \cite{Jacq}, a real obstruction can appear when the support of $\L$ is bounded and this third condition is not satisfied.
		\end{itemize}
	\end{remk}
	
	\subsection{Main result and applications}\label{Sous-section résultat principal et applications.}

	Here is our main result. We assume that one of the following two conditions is satisfied:
	\begin{equation}
		\begin{split}
			& \L(\infty) > 0 \text{ and } \L([0,\infty))>p_c, \\
			\text{or } & \L(\infty) = 0 \text{ and the support of $\L$ is unbounded.}
		\end{split}\label{eq: Hypothèses sur la loi.}
	\end{equation}
	
	\begin{theorem}\label{Théorème à démontrer.}
		Let $\mathfrak{P}=(\Lambda,u^\Lambda,v^\Lambda,\cA^\Lambda)$ be a valid pattern, assume \eqref{eq: Hypothèses sur la loi.} and that $\L$ is useful. Then there exist $\alpha >0$, $\beta_1 >0$ and $\beta_2>0$ such that for all $x \in \Z^d$, \[ \P \left((0,x) \in \mathfrak{C} \text{ and } \exists \mbox{ a geodesic $\gamma$ from $0$ to $x$ such that } N^\mathfrak{P}(\gamma) < \alpha \|x\|_1 \right) \le \beta_1 \mathrm{e}^{-\beta_2 \|x\|_1}. \]
	\end{theorem}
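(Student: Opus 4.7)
The analogous result of \cite{Jacq} rests on two ingredients unavailable here: a moment assumption that forces the combinatorial length of a geodesic to be $O(\|x\|_1)$ (via the shape theorem), and tail estimates on $\L$ that control the cost of resampling a box of the environment in order to plant a pattern. The plan is to replace the true geodesic by a \emph{penalized geodesic} $\hat\gamma$, obtained as a minimizer of $\widehat T(\pi) = T(\pi)+\rho|\pi|$ for a small but fixed $\rho>0$, establish the analogue of the target bound for $\hat\gamma$, and then transfer the conclusion to true geodesics by comparing $T$ and $\widehat T$.

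\textbf{Linear length and pattern insertion for $\hat\gamma$.} The penalty yields the automatic bound $|\hat\gamma|\le \widehat T(\hat\gamma)/\rho$. To upgrade this to a linear estimate I would first exhibit, outside an event of probability $\le \beta_1\mathrm{e}^{-\beta_2\|x\|_1}$, a bypass path from $0$ to $x$ whose combinatorial length and total passage time are both $O(\|x\|_1)$: through $\cC_\infty$ via Antal--Pisztora-type estimates when $\L(\infty)>0$, and by a standard truncation/renormalization argument when $\L(\infty)=0$ with unbounded support. Comparing $\widehat T(\hat\gamma)$ with the bypass forces $|\hat\gamma|=O(\|x\|_1)$ and $T(\hat\gamma)=O(\|x\|_1)$. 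Once $|\hat\gamma|$ is linear, the modification scheme of \cite{Jacq} can be replayed for $\hat\gamma$: partition $\Z^d$ into translates of a cube slightly larger than $\Lambda$, and in each cube attempt to resample the environment so that the pattern appears and $\hat\gamma$ is forced to use it. The validity of $\mathfrak P$ gives a uniform lower bound on the per-cube success probability, while the penalty $\rho$ keeps the cost of the rewiring under control in $\widehat T$. A BK/Chernoff bound across the $\Theta(\|x\|_1)$ disjoint cubes then yields $N^\mathfrak P(\hat\gamma)\ge\alpha_0\|x\|_1$ outside an exponentially small event.

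\textbf{Transfer to true geodesics.} Combining penalized optimality $\widehat T(\hat\gamma)\le\widehat T(\gamma)$ with geodesic optimality $T(\gamma)\le T(\hat\gamma)$ gives $0\le T(\hat\gamma)-T(\gamma)\le\rho(|\gamma|-|\hat\gamma|)$, so any excursion of $\gamma$ away from $\hat\gamma$ must be paid for by a saving in passage time. I would use this to argue that on a constant fraction of cubes of the partition the available savings are too small to allow $\gamma$ to bypass the planted pattern; the pattern crossings of $\hat\gamma$ then promote to pattern crossings of $\gamma$, yielding $N^\mathfrak P(\gamma)\ge\alpha\|x\|_1$ simultaneously for every geodesic.

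\textbf{Main obstacle.} The delicate step is this final transfer. Without moment control, unusually cheap detours coming from heavy tails of $\L$ could in principle allow $\gamma$ to wander macroscopically far from $\hat\gamma$; the penalty $\rho$ must be tuned—in a way that depends on $\L$ through the bypass estimate—so that such detours remain rare on the relevant scales, yet small enough that $\widehat T$ still tracks $T$ faithfully. The validity clauses of $\mathfrak P$, in particular the face-disjointness of endpoints and the unbounded-support alternative, should be what prevents the planted pattern from being cheaply avoided by a local detour. In the regime $\L(\infty)>0$ there is the further constraint that each cube in which a pattern is planted must remain connected to $\cC_\infty$ at finite passage time, which I would secure by attaching it to a nearby highway in $\cC_\infty$ through short open paths produced by Antal--Pisztora estimates.
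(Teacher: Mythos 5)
Your proposal shares a keyword with the paper (``penalized geodesic'') but not the concept, and the step on which everything hinges --- transferring a pattern count from your auxiliary path $\hat\gamma$ to the true geodesics --- has no workable mechanism as sketched. In the paper, a penalized path is by definition a path that takes \emph{no} translate of the pattern, and $t_P(0,x)$ is the optimal passage time over such paths; the event that some geodesic avoids the pattern is then contained in $\{t_P(0,x)=t(0,x)\}$, and the whole proof consists in showing that, with high probability, a pattern-avoiding geodesic admits a strictly cheaper local ``shortcut'' that does take the pattern, which forces $t(0,x)<t_P(0,x)$ and hence forces \emph{every} geodesic to take the pattern. With your additive length penalty $\widehat T=T+\rho|\cdot|$, the minimizer $\hat\gamma$ is a genuinely different object, and knowing $N^{\mathfrak P}(\hat\gamma)\ge\alpha_0\|x\|_1$ says nothing about $N^{\mathfrak P}(\gamma)$: the inequality $0\le T(\hat\gamma)-T(\gamma)\le\rho(|\gamma|-|\hat\gamma|)$ you derive does not localize $\gamma$ near $\hat\gamma$ in any way (a geodesic can be disjoint from $\hat\gamma$ except at the endpoints while satisfying it), so the claim that ``the pattern crossings of $\hat\gamma$ promote to pattern crossings of $\gamma$'' is unsupported. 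This is precisely the difficulty the paper's pattern-avoidance penalization is designed to dissolve, since there one never needs to locate the geodesics at all.

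A second genuine gap is the resampling step itself. You propose to resample each of $\Theta(\|x\|_1)$ cubes so that the pattern appears and $\hat\gamma$ ``is forced to use it,'' and then apply a BK/Chernoff bound. But the set of cubes visited by $\hat\gamma$, and $\hat\gamma$ itself, are functions of the whole environment, so the per-cube success events are not independent and cannot be aggregated by Chernoff; moreover, resampling a cube changes $\hat\gamma$, so one must prove that the auxiliary path and its sequence of relevant boxes are \emph{stable} under the modification. The paper devotes most of Section~\ref{Section cas général.} to exactly this: the iteration over events $\cM^k(\ell)$ with the conditional resampling identity of Lemma~\ref{Lemme pour les inégalités avec les indicatrices dans la modification.}, the construction of forbidden zones, the uniqueness of the planted pattern in the resampled box, and the verification that the selected $k$-geodesic and its $S^k$-sequence are unchanged. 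None of this is addressed in your outline, and without it the argument does not close. Your preliminary step (a linear-length, linear-time bypass via Antal--Pisztora or truncation, giving $|\hat\gamma|,T(\hat\gamma)=O(\|x\|_1)$ off an exponentially small event) is sound, but it addresses a difficulty the paper's route avoids entirely rather than the ones that actually block the proof.
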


	In \cite{Jacq} we proved the following result.
	
	\begin{thm}[Theorem 1.4 in \cite{Jacq}]\label{Théorème du premier article.}
		Let $\mathfrak{P}=(\Lambda,u^\Lambda,v^\Lambda,\cA^\Lambda)$ be a valid pattern, assume that $\L$ is useful, $\L(\infty)=0$ and one of the following two conditions is satisfied:
		\begin{enumerate}[label=(\Roman*)]
			\item\label{cas : Premier cas traité dans l'article 1.} $\L$ has a bounded support,
			\item\label{cas : Deuxième cas traité dans l'article 1.} 
			$\L$ has an unbounded support and we have
			\begin{equation}
				\E \min \left[T^d_1,\dots,T^d_{2d}\right] < \infty, \label{eq: hypothèse de moment du premier article.}
			\end{equation}
			where $T^d_1,\dots,T^d_{2d}$ are independent with distribution $\L$.
		\end{enumerate}
		Then there exist $\alpha >0$, $\beta_1 >0$ and $\beta_2>0$ such that for all $x \in \Z^d$, \[ \P \left(\exists \mbox{ a geodesic $\gamma$ from $0$ to $x$ such that } N^\mathfrak{P}(\gamma) < \alpha \|x\|_1 \right) \le \beta_1 \mathrm{e}^{-\beta_2 \|x\|_1}. \]
	\end{thm}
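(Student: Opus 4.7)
The strategy is to reduce the general case to Theorem~B of~\cite{Jacq}, which covers distributions with bounded support (case~(I)), by truncating the passage times, and to transfer the resulting pattern count back to $T$-geodesics via what I will call \emph{penalized geodesics}.

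\textbf{Step 1 (truncation and modified pattern).} Under \eqref{eq: Hypothèses sur la loi.} and the usefulness of $\L$, I fix a threshold $M \in (0,\infty)$ large enough that the truncated distribution $\tilde\L$ of $\tilde T(e) := T(e) \wedge M$ remains useful (in particular $\P(T(e) \le M) > p_c$), and that the modified event $\tilde\cA^\Lambda := \cA^\Lambda \cap \{T(e) \le M \text{ for every edge } e \text{ of } \Lambda\}$ still has positive probability; this uses only that $\cA^\Lambda$ depends on finitely many edges and has positive probability. The pair $\tilde{\mathfrak P} := (\Lambda, \ulm, \vlm, \tilde\cA^\Lambda)$ is then a valid pattern (Definition~\ref{Définition motif valable.}) simultaneously for $\L$ and $\tilde\L$. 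Moreover, whenever $\tilde{\mathfrak P}$ occurs at some translate, $T$ and $\tilde T$ coincide on $\Lambda$ there, so $N^{\tilde{\mathfrak P}}(\pi) \le N^{\mathfrak P}(\pi)$ for every self-avoiding path~$\pi$.

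\textbf{Step 2 (applying Theorem~B to $\tilde T$).} Since $\tilde\L$ is useful with bounded support, Theorem~B case~(I) applies to the i.i.d.\ environment $\tilde T$ and to the pattern $\tilde{\mathfrak P}$. This yields constants $\alpha', \beta_1', \beta_2' > 0$ such that, outside an event of probability at most $\beta_1' e^{-\beta_2' \|x\|_1}$, every $\tilde T$-geodesic $\tilde\gamma$ from $0$ to $x$ satisfies $N^{\tilde{\mathfrak P}}(\tilde\gamma) \ge \alpha' \|x\|_1$, and hence $N^{\mathfrak P}(\tilde\gamma) \ge \alpha' \|x\|_1$ by Step~1.

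\textbf{Step 3 (penalized geodesics and transfer to $T$-geodesics).} The remaining task is the genuine novelty: passing from $\tilde T$-geodesics to $T$-geodesics, which is nontrivial because a $T$-geodesic may use edges with $T(e) > M$, on which $T$ and $\tilde T$ differ. I introduce the \emph{penalized geodesic} $\gamma^{\mathrm{pen}}$ from $0$ to $x$, defined as a minimizer of a functional of the form $T(\pi) + \delta \cdot K(\pi)$ with $\delta > 0$ small and $K(\pi) := \#\{e \in \pi : T(e) > M\}$. For $\delta$ small enough, $\gamma^{\mathrm{pen}}$ has $T$-time within $O(\delta \|x\|_1)$ of $t(0,x)$ yet uses only a bounded density of edges with $T > M$. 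I then renormalize at a scale $L$: call a box of side $L$ \emph{good} if it contains no edge with $T > M$, carries a copy of $\tilde{\mathfrak P}$, and, in the case $\L(\infty)>0$, meets $\cC_\infty$ regularly. For $M$ and $L$ large enough, good boxes form a supercritical site process on the coarse lattice (Liggett--Schonmann--Stacey), and both $\gamma^{\mathrm{pen}}$ and any $T$-geodesic $\gamma$ with $(0,x) \in \mathfrak C$ must traverse a linear-in-$\|x\|_1$ number of them. Inside each good box, $T = \tilde T$, so a local rerouting inserts the pattern at a bounded $T$-cost, forcing $\gamma$ to have already taken the pattern there on pain of not being $T$-optimal.

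\textbf{Main obstacle.} Step~3 is where I depart from~\cite{Jacq}, and the surgery inside a good box is the delicate point, because the entry/exit vertices of $\gamma$ on such a box are arbitrary. The remedy is to surround each good box with a buffer of edges of passage time $\le M$, so that cheap connections to the endpoints $\ulm, \vlm$ of $\tilde{\mathfrak P}$ always exist at cost $O(M L^{d-1})$. The case $\L(\infty) > 0$ requires additional care: one must ensure on a high-probability event both that the buffer exists and that the chemical distance from $0$ to $x$ inside $\cC_\infty$ is linear in $\|x\|_1$, via Antal--Pisztora-type estimates conditional on $(0,x) \in \mathfrak C$. Choosing $\alpha > 0$ small enough and combining the exponential decays from Step~2, from the density of good boxes, and from the chemical distance estimate delivers the required bound $\beta_1 e^{-\beta_2 \|x\|_1}$.
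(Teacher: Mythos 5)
This statement is not proved in the present paper at all: it is Theorem~1.4 of \cite{Jacq}, quoted verbatim, and the paper's ``proof'' consists of the citation. Your attempt therefore has to stand on its own, and as written it has genuine gaps. The central one is that Steps~1--2 produce information only about $\tilde T$-geodesics, and that information is never used afterwards: a $T$-geodesic need not be a $\tilde T$-geodesic (it may use edges with $T(e)>M$, precisely where the two environments disagree), so the bound $N^{\mathfrak P}(\tilde\gamma)\ge\alpha'\|x\|_1$ for $\tilde T$-geodesics says nothing about the $T$-geodesics in the statement. Step~3 then abandons the reduction and sketches an independent modification argument, but its crux --- ``a local rerouting inserts the pattern at a bounded $T$-cost, forcing $\gamma$ to have already taken the pattern there on pain of not being $T$-optimal'' --- is asserted, not proved. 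A rerouting at bounded cost does not produce a strictly shorter path; to derive a contradiction with optimality one needs the replaced segment of the geodesic to be strictly more expensive than the detour through the pattern, which is exactly the delicate content (typical boxes, lower bounds \`a la \eqref{Définition de delta}, shortcuts, and a van den Berg--Kesten resampling argument of the type carried out in Lemmas \ref{Lemme majoration de la probabilité de l'événement par lambda puissance Q.} and \ref{Lemme pour les inégalités avec les indicatrices dans la modification.}) that occupies the body of both \cite{Jacq} and this paper.

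Two further signs that the argument is not complete. First, the moment assumption \eqref{eq: hypothèse de moment du premier article.} of case \ref{cas : Deuxième cas traité dans l'article 1.} never enters your proof; but as explained in the last paragraph of Section \ref{Sous-section sketch of the proof.}, in the strategy of \cite{Jacq} that assumption is what controls the length of geodesic excursions from a box via the Cox--Durrett shape theorem, and removing it is precisely the achievement of Theorem \ref{Théorème à démontrer.}, which requires the (different) penalized-geodesic machinery of Section \ref{Section cas général.}. A sketch that silently dispenses with \eqref{eq: hypothèse de moment du premier article.} is claiming the stronger Theorem \ref{Théorème à démontrer.} and must supply that machinery. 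Second, your Step~3 and ``Main obstacle'' discuss $\L(\infty)>0$, the set $\mathfrak C$, $\cC_\infty$ and chemical-distance estimates; Theorem \ref{Théorème du premier article.} assumes $\L(\infty)=0$, so these considerations belong to Theorem \ref{Théorème à démontrer.}, not to the statement at hand, and indicate a conflation of the two results.
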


	Combining Theorems \ref{Théorème à démontrer.} and \ref{Théorème du premier article.} we immediately get:
	
	\begin{theorem}\label{Théorème général.}
		Let $\mathfrak{P}=(\Lambda,u^\Lambda,v^\Lambda,\cA^\Lambda)$ be a valid pattern, assume that $\L$ is useful and $\L([0,\infty)) > p_c$. Then there exist $\alpha >0$, $\beta_1 >0$ and $\beta_2>0$ such that for all $x \in \Z^d$, \[ \P \left((0,x) \in \mathfrak{C} \text{ and } \exists \mbox{ a geodesic $\gamma$ from $0$ to $x$ such that } N^\mathfrak{P}(\gamma) < \alpha \|x\|_1 \right) \le \beta_1 \mathrm{e}^{-\beta_2 \|x\|_1}. \]
	\end{theorem}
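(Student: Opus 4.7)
The plan is to perform a case analysis on the law $\L$ and invoke the appropriate existing theorem in each case; no new probabilistic argument is required, only a check of hypotheses. The three cases to distinguish are determined by whether $\L(\infty) > 0$ or $\L(\infty) = 0$, and in the latter situation by whether the support of $\L$ is bounded.

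First I would handle the case $\L(\infty) > 0$. Combined with the standing hypothesis $\L([0,\infty)) > p_c$, this is precisely the first alternative in \eqref{eq: Hypothèses sur la loi.}. Since $\L$ is useful and $\mathfrak{P}$ is valid by assumption, Theorem \ref{Théorème à démontrer.} applies directly and yields the claimed exponential tail bound with some constants $\alpha, \beta_1, \beta_2 > 0$.

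Next, suppose $\L(\infty) = 0$. Then every edge has a finite passage time, hence $\mathfrak{C} = \Z^d \times \Z^d$ and the event $\{(0,x) \in \mathfrak{C}\}$ is almost sure, so intersecting with it does not modify any probability. If the support of $\L$ is unbounded we fall into the second alternative of \eqref{eq: Hypothèses sur la loi.}, and Theorem \ref{Théorème à démontrer.} again delivers the conclusion. If the support of $\L$ is bounded, I would invoke Theorem \ref{Théorème du premier article.} in case \ref{cas : Premier cas traité dans l'article 1.}, whose hypotheses (useful distribution, $\L(\infty)=0$, bounded support, valid pattern) are all in force; its conclusion, combined with the remark that $(0,x) \in \mathfrak{C}$ is automatic here, is exactly what we want.

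Since these three subcases exhaust the possibilities allowed by the standing assumptions $\L$ useful and $\L([0,\infty)) > p_c$, Theorem \ref{Théorème général.} follows. There is no real obstacle, only bookkeeping: the one point to keep in mind is that Theorem \ref{Théorème du premier article.} is stated without the prefactor "$(0,x) \in \mathfrak{C}$", but this is inconsequential because its hypothesis $\L(\infty) = 0$ forces $\mathfrak{C}$ to be all of $\Z^d \times \Z^d$, so the two statements coincide in that regime.
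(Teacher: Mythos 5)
Your proof is correct and matches the paper's, which simply states that Theorem \ref{Théorème général.} follows immediately by combining Theorems \ref{Théorème à démontrer.} and \ref{Théorème du premier article.}; your three-case split (infinite times, finite unbounded, finite bounded) is exactly the implicit decomposition, and your remark that $(0,x)\in\mathfrak{C}$ is automatic when $\L(\infty)=0$ is the right bookkeeping point.
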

	
	Theorem \ref{Théorème général.} is a generalization of Theorem 2.3 in \cite{AndjelVares} (stated below as Theorem \ref{Théorème 2.3 de Andjel/Vares.}) since, to get this result, we can take the pattern (reduced to one edge) $\mathfrak{P}=(\{\ulm,\vlm\},\ulm,\vlm,\cA^\Lambda)$ where $\ulm=(0,\dots,0)$, $\vlm=(1,0,\dots,0)$ and $\cA^\Lambda$ is the event on which the passage time of the only edge of the pattern is greater than $M$. 
	
	\begin{thm}[Theorem 2.3 in \cite{AndjelVares}]\label{Théorème 2.3 de Andjel/Vares.}
		Let $\L$ be a useful distribution on $[0,+\infty)$ with unbounded support. Then, for each $M$ positive there exists $\epsilon=\epsilon(M)>0$ and $\alpha=\alpha(M)>0$ so that for all $x$, we have
		\begin{equation}
			\P \left(\exists \text{ geodesic $\pi$ from $0$ to $x$ such that } \sum_{e\in\pi} \1_{T(e) \ge M} \le \alpha \|x\|_1 \right) \le \mathrm{e}^{-\epsilon \|x\|_1}. \label{Théorème Andjel/Vares}
		\end{equation}		
	\end{thm}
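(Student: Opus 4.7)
My plan is to introduce penalized geodesics in order to compensate for the loss of the moment hypothesis used in \cite{Jacq}. For a parameter $\lambda>0$, I would define the penalized time
\[
t_\lambda(x,y) = \inf \bigl\{ T(\pi) + \lambda |\pi| : \pi \text{ is a path from } x \text{ to } y \bigr\},
\]
and call any minimizer a $\lambda$-penalized geodesic $\gamma_\lambda$. The decisive feature is the automatic length bound $|\gamma_\lambda| \le t_\lambda(0,x)/\lambda$. Using \eqref{eq: hypothèse percolation surcritique des temps finis.} and standard chemical-distance estimates inside the unique infinite cluster $\cC_\infty$, one constructs a reference path from $0$ to $x$ whose length and $T$-passage time are both linear in $\|x\|_1$ with exponential tails, which turns the above bound into $|\gamma_\lambda| \le C\|x\|_1$ off an exponentially small event. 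This restores the length control that the first article derived from the moment condition.

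With $|\gamma_\lambda|$ under control, I would rerun the local modification strategy of Theorem \ref{Théorème du premier article.} case \ref{cas : Premier cas traité dans l'article 1.} on $\gamma_\lambda$. Tile $\Z^d$ by disjoint translates of a large box containing $\Lambda$, and on each box that $\gamma_\lambda$ crosses attempt a surgery replacing the crossing piece by a path realizing the pattern $\mathfrak{P}$. Validity of $\mathfrak{P}$ supplies the replacement: when the support of $\L$ is unbounded, the matching at the box boundary is handled by inserting very large $T$-values on the discarded arc (whose extra cost is absorbed by the penalty budget), while when the endpoints of $\mathfrak{P}$ lie on two different faces, the freedom provided by distinct external normal vectors yields the geometric reconnection even with bounded support. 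Since the attempted modifications depend on disjoint edge sets, a Bernoulli-type concentration argument produces a linear lower bound on $N^\mathfrak{P}(\gamma_\lambda)$ off an exponentially small event.

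The final step is to transfer the conclusion from $\gamma_\lambda$ to a true geodesic. On the event $(0,x)\in\mathfrak{C}$, the idea is to tune $\lambda$ small enough — together with an appropriate tie-breaking rule (e.g., prefer shortest length among true geodesics) — so that $\gamma_\lambda$ is itself a true geodesic with high probability; an alternative route is to show that any true geodesic from $0$ to $x$ shares a linear fraction of its edges with some $\gamma_\lambda$ and inherits the pattern count. Either way one obtains the required linear bound on $N^\mathfrak{P}(\gamma)$.

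The hardest step will be this last transfer: without any moment control, true geodesics may be much longer than $\|x\|_1$, so a direct length comparison with $\gamma_\lambda$ is not available. A workable strategy is a convexity-in-$\lambda$ analysis of $t_\lambda(0,x)$ combined with a careful tie-breaking argument, showing that for sufficiently small $\lambda$ a penalized minimizer coincides with a true geodesic off an exponentially small event; dealing simultaneously with the presence of infinite passage times, which forces the comparison to take place inside $\cC_\infty$ rather than the full graph, is where the bulk of the work is likely to sit.
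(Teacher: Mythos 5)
The statement is not reproved in the paper: it is derived as an immediate corollary of Theorem~\ref{Théorème général.} by taking the single-edge pattern $\mathfrak{P}=(\{u^\Lambda,v^\Lambda\},u^\Lambda,v^\Lambda,\{T(e)\ge M\})$, and Theorem~\ref{Théorème général.} in the relevant regime (\ref{c: Case III.}) is the content of Section~\ref{Section cas général.}. So the honest comparison is between your plan and the paper's proof of the pattern theorem.

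Your notion of ``penalized geodesic'' is different from the paper's, and the difference is load-bearing. You penalize by \emph{length}, defining $t_\lambda(0,x)=\inf\{T(\pi)+\lambda|\pi|\}$, which indeed gives an a priori linear length bound on $\gamma_\lambda$. The paper penalizes by \emph{excluding paths that take the pattern}: $t_P(0,x)=\inf\{T(\pi):\pi\text{ avoids the pattern}\}$. This choice is not cosmetic. The paper never needs length control at all, because its logical skeleton is a dichotomy: if a pattern-avoiding optimizer has a shortcut that takes the pattern, then $t(0,x)<t_P(0,x)$, which automatically forces \emph{every} true geodesic to take the pattern. The modification argument is then run on $\gamma_P$ itself, and stability of the selected $k$-geodesic under resampling (Lemmas~\ref{Lemme pour dire qu'un chemin $k$-pénalié dans Tu est quand pénalisé dans T.}--\ref{Lemme pour dire que gamma est une k-géodésique dans l'environnment modifié.}) is what replaces the moment hypothesis. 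There is no ``transfer back to a true geodesic'' step because none is needed.

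This is precisely where your plan has a genuine gap. You correctly flag the transfer from $\gamma_\lambda$ to true geodesics as the hardest step, but the tools you suggest do not close it. For small $\lambda$ one has $t(0,x)\le t_\lambda(0,x)\le t(0,x)+\lambda\,|\gamma|$ for any true geodesic $\gamma$, and without a moment assumption $|\gamma|$ can be much larger than $\|x\|_1$; so $\gamma_\lambda$ is \emph{not} generically a true geodesic, it is a different object that trades passage time for length. Convexity of $\lambda\mapsto t_\lambda(0,x)$ tells you nothing about whether the minimizing path at small $\lambda$ coincides with, or shares a positive fraction of edges with, a true geodesic. Nor does a tie-breaking rule help: there simply may be no $\lambda>0$ for which the $\lambda$-minimizer is a geodesic. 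And since the target event concerns \emph{every} geodesic (``$\exists$ a geodesic $\pi$ such that\ldots''), a statement about one particular short path is not sufficient. In short, length-penalization restores length control on the wrong path. The paper's key idea, which your proposal misses, is that one should define the penalized object as the thing to be contradicted (the best pattern-avoiding competitor), so that ``penalized geodesic has a shortcut'' directly yields ``all true geodesics take the pattern'' without any geometric comparison between the two.
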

	
	The proof of Theorem \ref{Théorème à démontrer.} is given in Section \ref{Section cas général.}. It is partly inspired by the proof of Theorem 2.3 in \cite{AndjelVares}.
	The proofs of Theorems \ref{Théorème à démontrer.} and \ref{Théorème du premier article.} are independent and the only intersection between these two theorems is the case \ref{cas : Deuxième cas traité dans l'article 1.} above. Theorem \ref{Théorème général.} is an extension of Theorem 1.4 in \cite{Jacq}. We refer to \cite{Jacq} for an account of the history of such results and for applications. As an example of application we prove the following result, which is a generalization of the main result of \cite{VdBK}.
	
	\paragraph*{The van den Berg-Kesten comparison principle without any moment assumption.}
	
	Let $\L$ and $\Lt$ be two distributions taking values in $[0,\infty]$ such that:
	\begin{enumerate}[label=($\mathcal{H}$\arabic*)]
		\item\label{enum: VdB-K 4.} $\L$ is useful,
		\item\label{enum: VdB-K 1.} $\L([0,\infty)) > p_c$ and $\Lt([0,\infty)) > p_c$,
		\item\label{enum: VdB-K 2.} $\L \ne \Lt$,
		\item\label{enum: VdB-K 3.} there exists a couple of random variables $\tau$, $\tilde{\tau}$ on some probability space, with marginal distributions $\L$ and $\Lt$, respectively, and satisfying 
		\begin{equation}
			\E [\tilde{\tau} | \tau] \le \tau. \label{eq: extension de VdB-K-1.}
		\end{equation}
	\end{enumerate}
	
	We consider a family $T=\{T(e) \, : \, e \in \cE\}$ of i.i.d.\ random variables with distribution $\L$ and another family $\tilde{T}=\{\tilde{T}(e) \, : \, e \in \cE\}$ of i.i.d\ random variables with distribution $\Lt$. The geodesic time defined at \eqref{Définition geodesic time.} is denoted by $t$ in the environment $T$ and by $\tilde{t}$ in the environment $\tilde{T}$.
	%{\color{orange}
	%With theses assumptions, a time constant for each distribution can be defined. We refer to Theorem 1 and Theorem 4 in \cite{CerfTheret} for the definition and the existence of the time constant. In our case, for all $x \in \Z^d$, we denote by $\mu(x)$ the time constant in the direction $x$ for the distribution $\L$ and by $\tilde{\mu}(x)$ the one for the distribution $\Lt$. 
	%}
	With these assumptions, a time constant for each distribution can be defined thanks to \cite{CerfTheret}. We refer to \cite{CerfTheret} for an extensive account. Here we recall what we need for our purpose.
	By \ref{enum: VdB-K 1.}, there exists $M \in \R$ such that 
	\begin{equation}
		\L([0,M]) > p_c \text{ and } \Lt([0,M]) > p_c.\label{eq: extension de VdB-K-2.}
	\end{equation} Fix such a $M$. Let $\cC_M$ (resp. $\tilde{\cC}_M$) be the infinite cluster for the Bernoulli percolation $(\1_{\{T(e) \le M\}}, \, e \in \cE)$ (resp. $(\1_{\{\tilde{T}(e) \le M\}}, \, e \in \cE)$) which exists and is unique a.s. To any $x \in \R^d$, we associate a random point $\phi(x)$ (resp. $\tilde{\phi}(x)$) in $\cC_M$ (resp. in $\tilde{\cC}_M$) such that $\|x-\phi(x)\|_1$ (resp.  $\|x-\tilde{\phi}(x)\|_1$) is minimal, with a deterministic rule to break ties.

	Theorem 1 in \cite{CerfTheret} gives the existence of two deterministic functions $\mu \, : \, \R^d \to [0,\infty)$ and $\tilde{\mu} \, : \, \R^d \to [0,\infty)$ such that 
	\begin{equation}
		\forall x \in \Z^d, \, \lim\limits_{n \to \infty} \frac{t(\phi(0),\phi(nx))}{n} = \mu(x) \text{ a.s. and in $L^1$, and } \lim\limits_{n \to \infty} \frac{\tilde{t}(\tilde{\phi}(0),\tilde{\phi}(nx))}{n} = \tilde{\mu}(x) \text{ a.s. and in $L^1$}.\label{eq: extension de VdB-K-3.}
	\end{equation}	
	Theorem 4 in \cite{CerfTheret} ensures that the functions $\mu$ and $\tilde{\mu}$ do not depend on the choice of the constant $M$ satisfying \eqref{eq: extension de VdB-K-2.}.
	Furthermore, when  
	\begin{equation}
		\E \min[\tau_1,\dots,\tau_{2d}] < \infty,\label{eq: extension vdbk nouvelles corrections 1.}
	\end{equation}
	where $\tau_1,\dots,\tau_{2d}$ are i.i.d.\ copies of $\tau$, Theorem 4 in \cite{CerfTheret} also ensures that for all $x \in \R^d$, 
	\begin{equation}
		\lim\limits_{n \to \infty} \frac{t(0,\lfloor nx \rfloor)}{n} = \mu(x) \text{ a.s. and in $L^1$.}\label{eq: extension vdbk nouvelles corrections 2}
	\end{equation}
	This is the usual definition of the time constant. We refer to Theorem 2.18 in \cite{SaintFlourKesten} and Section 2.1 in \cite{50years} for more details on the result \eqref{eq: extension vdbk nouvelles corrections 2}. 
	The same holds for the environment $\tilde{T}$ if \eqref{eq: extension vdbk nouvelles corrections 1.} holds for $2d$ i.i.d.\ copies of $\tilde{\tau}$. 

	\begin{remk}
		%The notation $\tilde{\mu}$ is used in \cite{VdBK} and in \cite{CerfTheret} with two different meanings. In \cite{CerfTheret}, as stated in Remark 2, this notation is used to emphasize the fact that their limit is obtained with geodesic time between vertices of $\cC_M$ although when we do not use the cluster $\cC_M$ to get the time constant, we usually denote the time constant by $\mu$. This is the time constant defined at \eqref{eq: extension vdbk nouvelles corrections 2}. However, in \cite{VdBK}, we want to compare the time constant of two different distributions and one of them is denoted by $\tilde{F}$. Thus, its time constant is denoted by $\tilde{\mu}$. Here, since the aim is to extend the van den Berg-Kesten comparison principle, we introduce $\phi$ and $\tilde{\phi}$ to define the geodesic times depending on the clusters $\cC_M$ and $\tilde{\cC_M}$ and keep the notation $\tilde{\mu}$ for the time constant of the distribution $\tilde{\L}$.
		We warn the reader that notations $\tilde{T}$ and $\tilde{\mu}$ are used in \cite{CerfTheret} with a different meaning. We refer in particular to Remark 2 in \cite{CerfTheret} for explanations.
	\end{remk}

	We can now state the van den Berg-Kesten comparison principle for these time constants. 
	
	\begin{theorem}[Extension of the van den Berg-Kesten comparison principle]\label{Théorème VdB-K.}
		Let $\L$ and $\Lt$ be two distributions taking values in $[0,\infty]$ satisfying \ref{enum: VdB-K 4.}, \ref{enum: VdB-K 1.}, \ref{enum: VdB-K 2.} and \ref{enum: VdB-K 3.}. For all $x \in \Z^d$ such that $x \ne 0$, 
		\begin{equation}
			\tilde{\mu}(x) < \mu(x).\label{eq: nouvelles modifs vdbk intro}
		\end{equation}
	\end{theorem}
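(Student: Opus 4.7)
My plan is to exhibit a valid pattern $\mathfrak{P}$ whose presence on a $T$-geodesic allows strict beating of the $T$-time by the $\tilde T$-time, and then use Theorem~\ref{Théorème général.} to convert linearly many such occurrences into the strict inequality $\tilde\mu(x)<\mu(x)$. First, I will build $\mathfrak{P}$ from \ref{enum: VdB-K 2.} and \ref{enum: VdB-K 3.}. Since $\E[\tilde\tau\mid\tau]\le\tau$ and $\L\neq\Lt$, one has $\P(\tilde\tau<\tau)>0$: otherwise $\tilde\tau\ge\tau$ a.s., and combined with the coupling inequality this forces $\tilde\tau=\tau$ a.s., contradicting $\L\neq\Lt$. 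Partitioning this event by rationals yields $0\le b<a<a'<\infty$ and $q>0$ with $\P(T(e)\in[a,a'],\ \tilde T(e)\le b)\ge q$, and a measure-theoretic averaging gives a Borel $A\subseteq[a,a']$ with $\L(A)>0$ together with $q'>0$ such that $\P(\tilde\tau\le b\mid\tau=s)\ge q'$ for every $s\in A$. With $\delta:=a-b>0$ I take the single-edge pattern $\mathfrak{P}=(\{0,\epsilon_1\},0,\epsilon_1,\{T(\{0,\epsilon_1\})\in A\})$, which is valid because the two endpoints of a single edge admit distinct external unit normals and, under the pattern event, the unique edge has finite passage time.

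Assumptions \ref{enum: VdB-K 4.} and \ref{enum: VdB-K 1.} then allow invoking Theorem~\ref{Théorème général.} applied to $\mathfrak{P}$ in the environment $T$: there exist $\alpha,\beta_1,\beta_2>0$ such that, for every $y\in\Z^d$ with $(0,y)\in\mathfrak{C}$, some $T$-geodesic from $0$ to $y$ crosses $\mathfrak{P}$ at least $\alpha\|y\|_1$ times except on an event of probability $\le\beta_1\mathrm{e}^{-\beta_2\|y\|_1}$. Fixing $x\neq 0$ and applying this around the Cerf--Theret endpoints $\phi(0),\phi(nx)$, I obtain (with overwhelming probability for large $n$) a $T$-geodesic $\gamma_n$ between them with $N:=N^\mathfrak{P}(\gamma_n)\ge\alpha' n\|x\|_1$ for some $\alpha'>0$. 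Conditionally on $T$, the events $\{\tilde T(e_i)\le b\}$ indexed by the pattern edges $e_1,\ldots,e_N$ of $\gamma_n$ are independent Bernoullis with success probability at least $q'$; by Chernoff at least $q'N/2$ of them occur with overwhelming probability, each contributing a saving of at least $\delta$ to $T(\gamma_n)-\tilde T(\gamma_n)$.

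Combining this saving with the edgewise bound $\E[\tilde T(e)\mid T(e)]\le T(e)$ on the remaining edges, integration yields $\E[\tilde T(\gamma_n)]\le\E[T(\gamma_n)]-c\,n\|x\|_1$ for some $c>0$. The triangular inequality $\tilde t(\tilde\phi(0),\tilde\phi(nx))\le\tilde t(\tilde\phi(0),\phi(0))+\tilde T(\gamma_n)+\tilde t(\phi(nx),\tilde\phi(nx))$ combined with the $L^1$ limit \eqref{eq: extension de VdB-K-3.} (the two connection-cost terms being $O(1)$ in $L^1$) finally delivers $\tilde\mu(x)\le\mu(x)-c\|x\|_1<\mu(x)$. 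The hardest part will be the per-pattern-edge expected gain: while $\E[\tilde T(e)\mid T(e)=s]\le s$ always holds by \ref{enum: VdB-K 3.}, it may degenerate to equality when $\E[\tilde\tau\mid\tau]=\tau$ a.s., a situation still consistent with $\L\neq\Lt$; resolving this case forces enlarging $\Lambda$ into a multi-edge region carrying an alternative parallel subpath, so that the positive conditional variance of $\tilde\tau$, which is forced by $\L\neq\Lt$, produces a strict expected saving via the minimum of the two parallel $\tilde T$-times, without affecting the rest of the argument.
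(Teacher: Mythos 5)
Your overall framework matches the paper's: build a valid pattern whose occurrences on a $T$-geodesic yield a strict expected $\tilde T$-improvement, apply Theorem~\ref{Théorème général.} to get linearly many occurrences, and pass to the limit. However, there are two substantive gaps in your proposal.

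First, your reduction of \ref{enum: VdB-K 2.}--\ref{enum: VdB-K 3.} to ``finite $a<a'$ with $\P(\tau\in[a,a'],\,\tilde\tau\le b)>0$'' does not cover all cases. The correct observation $\P(\tilde\tau<\tau)>0$ splits as $\P(\tilde\tau<\tau,\ \tau<\infty)+\P(\tilde\tau<\infty,\ \tau=\infty)$, and it is possible (consistently with all the hypotheses) that the first term vanishes: $\tilde\tau=\tau$ a.s.\ on $\{\tau<\infty\}$ while $\P(\tilde\tau<\infty,\ \tau=\infty)>0$. In that case no finite $a<a'$ exists, your single-edge pattern cannot be constructed, and a pattern event concentrated on $\{T(e)=\infty\}$ is not valid (no finite-time path between the endpoints). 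The paper's first case handles this by a genuinely different pattern: a long thin rectangle where the $T$-optimal path is a long finite detour of cost $\gtrsim 2m\nu$ while, with positive conditional probability, the direct edge becomes finite and cheap under $\tilde T$; taking $m$ large forces a strict expected gain. This construction does not emerge from your argument and is not merely a fix to be bolted on later.

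Second, and more structurally, the Chernoff step does not deliver what you use it for. Conditionally on $T$, the pointwise savings on the $q'N/2$ ``good'' pattern edges are offset, in expectation over $\tilde T$, by the possible excess on the remaining pattern edges: what enters $\E[T(\gamma_n)-\tilde T(\gamma_n)]$ is the per-edge quantity $T(e)-\E[\tilde T(e)\mid T(e)]$, which may be exactly $0$ on your set $A$ even though $\P(\tilde T(e)\le b\mid T(e))\ge q'$. You correctly flag at the end that the case $\E[\tilde\tau\mid\tau]=\tau$ a.s.\ (with $\{\tau<\infty\}=\{\tilde\tau<\infty\}$) is ``the hardest part,'' but once that is admitted, the entire Chernoff paragraph is a red herring: what is needed is exactly the strict expected gain per pattern occurrence, namely $\E\bigl[\min_{\pi\in\Pi^{\mathfrak P}}\tilde T(\pi)\mid\cG\bigr]<\min_{\pi\in\Pi^{\mathfrak P}}T(\pi)-\eta$ on $\cA^\Lambda$, established for each of the three disjoint cases (paper's Lemma~\ref{l: 3 preuve vdbk lemme fondamental des moitfs}). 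Your sketch of the fix (a $2\times2$ block with two parallel two-edge paths, exploiting the conditional variance of $\tilde\tau$ via the minimum of the two sums) is the right idea and is what the paper does, but it is stated at the level of a plan, not a proof, and the claim that ``positive conditional variance is forced by $\L\neq\Lt$'' alone is imprecise — it requires the combination with $\E[\tilde\tau\mid\tau]=\tau$ a.s.\ and $\{\tau<\infty\}=\{\tilde\tau<\infty\}$ a.s., which is exactly the case delimitation you did not carry out.

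A smaller point: the assertion that the connection-cost terms $\tilde t(\tilde\phi(0),\phi(0))$ and $\tilde t(\phi(nx),\tilde\phi(nx))$ are $O(1)$ in $L^1$ is not justified (a priori these need not even be integrable). The paper avoids this by introducing the intersection cluster $\widehat{\mathcal C}$ and arguing via convergence in probability (Lemma~\ref{l: vdbk lemme tilde plus petit que overline}); this part of your argument can be repaired but as written the $L^1$ claim is a gap.
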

	
	The proof of Theorem \ref{Théorème VdB-K.} is given in Section \ref{Section preuve VdB-K.}.
	In \cite{VdBK}, van den Berg and Kesten prove the following theorem.
	%It is stated for two distributions, one more variable than the other. The definition of a distribution more variable than another is given at Definition 2.1 in \cite{VdBK}. 
	%When $\L$ and $\tilde{\L}$ are two distributions taking values in $[0,\infty)$, $\tilde{\L}$ is more variable than $\L$ if and only if \ref{enum: VdB-K 3.} holds (it is stated in Lemma 6.1 in \cite{MarchandStrictInqualities}).
	
	\begin{thm}[Theorem 2.9 in \cite{VdBK}]\label{Théorème orginal de VDBK.}
		Let $\L$ and $\tilde{\L}$ be two distributions taking values in $[0,\infty)$, having a finite first moment, satisfying\footnote{It is stated in \cite{VdBK} with the definition of a distribution more variable than another, but Theorem 2.6 in \cite{VdBK} ensures that the fact that $\tilde{\L}$ is more variable than $\L$ is equivalent to \ref{enum: VdB-K 3.} when $\L$ and $\tilde{\L}$ have a finite first moment.} \ref{enum: VdB-K 4.}, \ref{enum: VdB-K 2.} and \ref{enum: VdB-K 3.}. Then, \[\tilde{\mu}(\epsilon_1) < \mu(\epsilon_1).\]
	\end{thm}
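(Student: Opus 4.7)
The strategy is to apply Theorem~\ref{Théorème général.} with a carefully chosen ``swap pattern'' so that, under the edge-wise coupling of environments provided by hypothesis~\ref{enum: VdB-K 3.}, the $T$-geodesic meets many local configurations from which one can build a strictly cheaper $\tilde T$-detour, yielding a linear-in-$n$ improvement.

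I would first couple $T$ and $\tilde T$ edge by edge so that each pair $(T(e),\tilde T(e))$ has the joint law of $(\tau,\tilde\tau)$ from~\ref{enum: VdB-K 3.}, with independence across edges, so that $\E[\tilde T(e)\mid T(e)]\le T(e)$ a.s. I then construct a valid pattern $\mathfrak{P}=(\Lambda,u^\Lambda,v^\Lambda,\cA^\Lambda)$ where $\Lambda$ is a $2\times 2$ square in the first two coordinate directions with diagonally opposite corners $u^\Lambda$ and $v^\Lambda$, so that $\pi_1,\pi_2$ are the two length-two self-avoiding paths between these corners inside $\Lambda$. The event $\cA^\Lambda$ prescribes Borel conditions on the four $T$-values so that on $\cA^\Lambda$ one has (i) $T(\pi_1)<T(\pi_2)$ (forcing any geodesic whose subpath between $u^\Lambda$ and $v^\Lambda$ stays in $\Lambda$ to follow $\pi_1$) and (ii) $\P(\tilde T(\pi_2)\le\tilde T(\pi_1)-\delta\mid\cA^\Lambda)\ge q$ for fixed $\delta,q>0$. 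The existence of such a pattern follows from $\L\ne\Lt$ together with the coupling: if the ``mean-saving'' function $t\mapsto t-\E[\tilde\tau\mid\tau=t]$ is not $\L$-a.s.\ zero, arrange the $T$-values on $\pi_2$ in the strictly positive-saving region; in the degenerate case where this function vanishes $\L$-a.s., one has $\tilde\tau=\tau+Z$ with $Z$ independent of $\tau$, $\E Z=0$, $Z\not\equiv 0$, and one uses the positive conditional variance of $\tilde\tau\mid\tau$ together with independence on the four edges of $\Lambda$ to make $\tilde T(\pi_2)<\tilde T(\pi_1)-\delta$ occur with positive probability, taking $T(\pi_2)-T(\pi_1)$ sufficiently small. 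Validity of $\mathfrak P$ (positive probability, finite-$T$ paths between the endpoints, distinct external normals at $u^\Lambda$ and $v^\Lambda$) is then immediate.

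Applying Theorem~\ref{Théorème général.} to a $T$-geodesic $\gamma$ from $\phi(0)$ to $\phi(nx)$, both points lie in $\cC_\infty\subseteq\mathfrak C$, so $N^{\mathfrak P}(\gamma)\ge\alpha\|\phi(nx)-\phi(0)\|_1$ outside an exponentially rare event. I then build a modified path $\gamma^*$ by selecting a maximal set of non-overlapping pattern occurrences (whose number is still a constant fraction of $N^{\mathfrak P}(\gamma)$, since $\Lambda$ has bounded diameter) and replacing, at each such occurrence, the subpath $\pi_1$ by $\pi_2$ iff the favorable coupling event $\{\tilde T(\pi_2)\le\tilde T(\pi_1)-\delta\}$ holds. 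By edge-wise independence of the coupling, these favorable events are conditionally independent given $T$, each with conditional probability $\ge q$. Combining linearity of expectation with the inequality $\E\tilde T(\gamma)\le\E T(\gamma)$---which follows from $\E[\tilde T(e)\mid T(e)]\le T(e)$---I obtain
\[
\E\,\tilde t(\phi(0),\phi(nx))\le\E\,\tilde T(\gamma^*)\le\E\,t(\phi(0),\phi(nx))-c\,n\,\|x\|_1
\]
for some $c>0$ and $n$ large. Under the coupling $\{T(e)<\infty\}\subseteq\{\tilde T(e)<\infty\}$ a.s.\ (else $\E[\tilde T(e)\mid T(e)=t]=\infty$ for some finite $t$, contradicting~\ref{enum: VdB-K 3.}), so $\cC_\infty\subseteq\tilde\cC_\infty$ and the bridging terms $\tilde t(\tilde\phi(0),\phi(0))$ and $\tilde t(\phi(nx),\tilde\phi(nx))$ are a.s.\ finite with $O(1)$ expectation thanks to the exponential tails of $\|\phi-\tilde\phi\|_1$ and standard renormalization in supercritical percolation. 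Adding these and dividing by $n$ yields $\tilde\mu(x)\le\mu(x)-c\|x\|_1<\mu(x)$ for every $x\ne 0$.

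The main obstacle is the construction of $\cA^\Lambda$ satisfying both (i) and (ii), especially in the degenerate case where only the variance of $\tilde\tau\mid\tau$ is available: the detour's mean $\tilde T$-cost exceeds the direct path's by the positive quantity $T(\pi_2)-T(\pi_1)$, so one must tune $T(\pi_1),T(\pi_2)$ close enough that the random fluctuations of $\tilde T$ reverse the ordering with positive probability, while keeping $\cA^\Lambda$ of positive probability and compatible with the usefulness of $\L$. The bridging between $\phi$ and $\tilde\phi$ is a comparatively routine technicality relying on the embedding $\cC_\infty\subseteq\tilde\cC_\infty$ and standard supercritical percolation estimates.
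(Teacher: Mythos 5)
Your plan is, in essence, the paper's own route to its \emph{extension} of this theorem (Theorem~\ref{Théorème VdB-K.}, proved in Section~\ref{Section preuve VdB-K.}): couple $T$ and $\tilde T$ edge by edge, build a small valid pattern on which the conditional $\tilde T$-minimum strictly beats the $T$-minimum (the paper's Lemma~\ref{l: 3 preuve vdbk lemme fondamental des moitfs}), apply Theorem~\ref{Théorème général.} to get linearly many pattern crossings, patch the geodesic at each crossing, and pass to the limit via the subadditive ergodic theorem. For the specific finite-moment, $[0,\infty)$-valued case of Theorem~\ref{Théorème orginal de VDBK.}, you are also right that the $\phi/\tilde\phi$ bridging is dispensable: the paper itself notes that one may take $\phi(y)=\tilde\phi(y)=y$ there.

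There is, however, one genuine error in your ``degenerate'' case. You assert that $\E[\tilde\tau\mid\tau]=\tau$ a.s.\ implies $\tilde\tau=\tau+Z$ with $Z$ \emph{independent} of $\tau$, $\E Z=0$, $Z\not\equiv 0$. That is false: the martingale condition $\E[\tilde\tau-\tau\mid\tau]=0$ does not give independence of the increment from $\tau$ (think of $\tilde\tau=\tau(1+\epsilon)$ with $\epsilon$ a centered sign independent of $\tau$ — then $Z=\tau\epsilon$ is not independent of $\tau$). What you actually need, and what the paper establishes in Lemmas~\ref{l: lemme préliminaire VdB-K pour avoir ensuite le lemme avec l'espérance conditionnelle.}--\ref{l: lemme VdB-K avec le min des espérances conditionnelles.} via a transition kernel, is that on a positive-$\L$-measure set the conditional law of $\tilde\tau$ given $\tau$ is non-degenerate with mass strictly below $\tau$ and mass at or above $\tau$; restricting $\tau$ to a small interval $I$ then makes the detour's $\tilde T$-cost undercut the direct one with probability bounded below. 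Your intuition (``positive conditional variance plus shrinking $T(\pi_2)-T(\pi_1)$'') is the right heuristic and is exactly what this measure-theoretic argument formalizes, but the stated independence decomposition has to be discarded. A second, smaller point: requiring the strict inequality $T(\pi_1)<T(\pi_2)$ as part of $\cA^\Lambda$ can have probability zero when $\L$ has atoms; the paper sidesteps this by working symmetrically with $\min(\tau_1+\tau_2,\tau_3+\tau_4)$ on both sides of the conditional-expectation estimate, which is cleaner and should replace your condition (i).
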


	Theorem \ref{Théorème VdB-K.} is an extension of Theorem \ref{Théorème orginal de VDBK.}. With Theorem 1.2 in \cite{MarchandStrictInqualities}, Marchand extends Theorem \ref{Théorème orginal de VDBK.} in another direction.
	
	\begin{thm}[Theorem 1.2 in \cite{MarchandStrictInqualities}]\label{Théorème Régine Marchand.}
		Assume that $d=2$ and let $\L$ and $\tilde{\L}$ be two distributions taking values in $[0,\infty)$, such that $\L(0) < p_c$, satisfying\footnote{It is also stated in \cite{MarchandStrictInqualities} with the definition of a distribution more variable than another, but Lemma 6.1 in \cite{MarchandStrictInqualities} ensures that this definition is also equivalent to \ref{enum: VdB-K 3.} when $\L$ and $\tilde{\L}$ takes value in $[0,\infty)$.} \ref{enum: VdB-K 2.} and \ref{enum: VdB-K 3.}. Then, \[\tilde{\mu}(\epsilon_1) < \mu(\epsilon_1).\]
	\end{thm}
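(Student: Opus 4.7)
The plan is to adapt the classical van den Berg--Kesten comparison strategy, using the pattern theorem (Theorem~\ref{Théorème général.}) as the main new ingredient that dispenses with any moment assumption. By \ref{enum: VdB-K 3.}, work on a single probability space carrying an i.i.d.\ family $\bigl(T(e),\tilde{T}(e)\bigr)_{e\in\cE}$ with marginals $\L$ and $\Lt$ and satisfying $\E[\tilde{T}(e)\mid T(e)] \le T(e)$ a.s. The first step is to construct, from the hypothesis $\L \ne \Lt$, a valid pattern $\mathfrak{P}=(\Lambda,u^\Lambda,v^\Lambda,\cA^\Lambda)$ (valid for the law $\L$) and a constant $\delta>0$ such that, on $\cA^\Lambda$, there exists a self-avoiding path $\sigma$ from $u^\Lambda$ to $v^\Lambda$ inside $\Lambda$ with
\[
\E\bigl[\tilde{T}(\sigma) \,\big|\, T\bigr]\ \le\ \min_{\pi\subset\Lambda:\, u^\Lambda\to v^\Lambda} T(\pi)\ -\ \delta .
\]
When $\L$ has unbounded support, $\sigma$ is taken to be a single high-$T$-valued edge where the coupling produces a strictly positive mean gain; when $\L(\infty)>0$ and the support of $\L$ is bounded (so that validity forces $u^\Lambda,v^\Lambda$ to lie on distinct faces), $\Lambda$ is chosen with two disjoint competing routes, and $\cA^\Lambda$ is designed so that one route is the unique $T$-optimal path while the other route has strictly smaller $\tilde{T}$-expectation. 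Constructing this pattern is the main technical difficulty, since it requires isolating the discrepancy between $\L$ and $\Lt$ in a finite local event while guaranteeing the existence of a finite-$\tilde{T}$ alternative.

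Once $\mathfrak{P}$ is in hand, \ref{enum: VdB-K 4.} and \ref{enum: VdB-K 1.} make Theorem~\ref{Théorème général.} applicable to $\L$: there exist $\alpha,\beta_1,\beta_2>0$ such that, whenever $(0,y)\in\mathfrak{C}$, every $T$-geodesic $\gamma$ from $0$ to $y$ satisfies $N^\mathfrak{P}(\gamma) \ge \alpha \|y\|_1$ with probability at least $1-\beta_1\mathrm{e}^{-\beta_2\|y\|_1}$. By translation invariance and a Borel--Cantelli argument, almost surely for every large $n$, every $T$-geodesic $\gamma_n$ from $\phi(0)$ to $\phi(nx)$ satisfies $N^\mathfrak{P}(\gamma_n)\ge (\alpha/2)\, n\,\|x\|_1$.

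In the final step, given a $T$-geodesic $\gamma_n$ from $\phi(0)$ to $\phi(nx)$, build a competitor $\tilde{\gamma}_n$ in the $\tilde{T}$-environment by replacing, inside each of the $N^\mathfrak{P}(\gamma_n)$ pattern occurrences of $\gamma_n$, the $T$-optimal subpath by the appropriately translated copy of $\sigma$, leaving all other edges of $\gamma_n$ unchanged. The coupling inequality on edges outside the patterns and the $\delta$-gap inside each pattern give
\[
\E\bigl[\tilde{T}(\tilde{\gamma}_n) \,\big|\, T\bigr]\ \le\ t(\phi(0),\phi(nx))\ -\ \delta\, N^\mathfrak{P}(\gamma_n) .
\]
Adding in the $L^1$-bounded cost of connecting $\tilde{\phi}(0)$ to $\phi(0)$ and $\phi(nx)$ to $\tilde{\phi}(nx)$ via short paths in $\tilde{\cC}_M$ (of $o(n)$ expectation), one obtains
\[
\E\bigl[\tilde{t}(\tilde{\phi}(0),\tilde{\phi}(nx))\bigr]\ \le\ \E\bigl[t(\phi(0),\phi(nx))\bigr]\ -\ \tfrac{\alpha\delta}{2}\, n\,\|x\|_1\ +\ o(n) .
\]
Dividing by $n$ and invoking \eqref{eq: extension de VdB-K-3.} yields $\tilde{\mu}(x) \le \mu(x) - (\alpha\delta/2)\,\|x\|_1$, hence the required strict inequality $\tilde{\mu}(x)<\mu(x)$ whenever $x\ne 0$.
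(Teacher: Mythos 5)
This statement is Marchand's theorem, quoted from \cite{MarchandStrictInqualities}; the paper does not prove it but only cites it, precisely because the techniques developed here do not cover it in full generality. Your argument has a genuine gap at its very first step: you invoke hypothesis \ref{enum: VdB-K 4.} (usefulness of $\L$) in order to apply Theorem~\ref{Théorème général.}, but usefulness is \emph{not} among the hypotheses of Marchand's theorem. The statement only assumes $\L(0)<p_c$ in dimension $2$; when $\r>0$ it allows $\L(\r)\ge \overrightarrow{p_c}$, a regime explicitly excluded by the definition \eqref{eq: loi useful.} and hence by Theorem~\ref{Théorème général.} (and by Proposition~\ref{Gros théorème à démontrer, un seul motif.}, whose proof uses \eqref{Définition de delta} and the typical-box construction, both of which break down without usefulness). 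In that regime the geodesics to points outside the percolation cone are essentially directed paths of minimal-weight edges, and, as the discussion following Theorem~\ref{Théorème Régine Marchand.} in the introduction points out, handling $\epsilon_1$ there requires specific arguments — in particular large deviations for supercritical oriented percolation — which your proposal does not supply.

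What you have written is, in substance, a proof of Theorem~\ref{Théorème VdB-K.} restricted to $d=2$ and finite passage times (and it does track Section~\ref{Section preuve VdB-K.} reasonably faithfully: the pattern of Lemma~\ref{l: 3 preuve vdbk lemme fondamental des moitfs}, the application of Theorem~\ref{Théorème général.} as in Lemma~\ref{l: vdbk lemme nombre de motifs rencontrés assez grand.}, and the surgery of Lemma~\ref{l: vdbk lemme overline plus petit que sans rien.}). A secondary issue: your case analysis for constructing the pattern invokes ``$\L(\infty)>0$ with bounded support,'' which cannot occur here since $\L$ takes values in $[0,\infty)$; the relevant trichotomy is the one in Section~\ref{Sous-section VdB-K définition du motif valable.}, driven by whether $\P(\E[\tilde\tau\mid\tau]<\tau)>0$ or $\E[\tilde\tau\mid\tau]=\tau$ a.s. But the essential defect is the missing non-useful case, which is the whole point of Marchand's improvement over van den Berg--Kesten.
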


	In dimension 2, the result of Marchand is stronger than Theorem \ref{Théorème orginal de VDBK.} on two aspects: on the one hand, there is no moment assumption and on the other hand, the condition 
	\begin{equation}
		\L(\r) < \overrightarrow{p_c} \text{ when } \r>0 \label{eq: condition enlevée par R. Marchand.}
	\end{equation} 
	is removed.
	When $\r > 0$ and $\L(\r) > \overrightarrow{p_c}$, the problem involves oriented percolation, where the open edges correspond to those with the smallest time values. In this context, the largest part of each geodesic linking the origin to a distant point within the cone of percolation is a directed path made of minimal edges, highlighting a distinct behavior. 
	
	For a point $x$ inside this cone, $\mu(x) = \r \|x\|_1$. Moreover, when $\tilde{t}_{\min} = \r$ and the \ref{enum: VdB-K 1.} condition is met, $\tilde{\mu}(x) = \r \|x\|_1$ as well. Notably, \eqref{eq: nouvelles modifs vdbk intro} does not apply to such $x$ values. Establishing \eqref{eq: nouvelles modifs vdbk intro} for $x$ outside the cone, like $\epsilon_1$, requires specific arguments, in particular large deviations for supercritical oriented percolation. We have opted not to explore this case in this article.
	We refer to \cite{MarchandStrictInqualities}, and more specifically to Theorem 1.5 in \cite{MarchandStrictInqualities}, for further explanations.

	\subsection{Sketch of the proof}\label{Sous-section sketch of the proof.}
	
	In this section, we give an informal sketch of the proof of Theorem \ref{Théorème à démontrer.}. Fix a pattern $\mathfrak{P}$ and $x \in \Z^d$ with $\|x\|$ large. Consider the event:
	\[
	\cM = \{\text{$(0,x) \in \mathfrak{C}$ and there exists a geodesic from }0\text{ to }x\text{ which does not take the pattern}\}.
	\]
	The aim is to prove that $\cM$ has a probability small enough in $\|x\|$.
	More precisely, we want to prove
	\begin{equation}
		\P(\cM) \ll \frac{1}{\|x\|^{d-1}}. \label{eq: nouveau sketch of proof 4.}
	\end{equation}
	From this result, by a standard re-normalization argument, we easily get Theorem \ref{Théorème à démontrer.} (see Proposition \ref{Gros théorème à démontrer, un seul motif.} in Section \ref{Sous-section organisation de la preuve.} for a formal statement of \eqref{eq: nouveau sketch of proof 4.}). 
	
	\paragraph*{General idea.}
	
	As in \cite{Jacq}, to get \eqref{eq: nouveau sketch of proof 4.}, the idea is to define a suitable event $\cG$ and a suitable sequence of events $\cM(\ell)$ for $0 \le \ell \le q$ such that, for some positive constant $c<1$,
	\begin{enumerate}
		\item $q \ge c\|x\|$,
		\item $\cM \subset \cM(q) \cup \cG^c$ where $\displaystyle \P(\cG^c) \ll \frac{1}{\|x\|^{d-1}}$,
		\item for all $\ell \ge 1$, 
		\begin{equation}
			\P(\cM(\ell)) \le c \P(\cM(\ell-1)).\label{eq: nouveau sketch of proof 1.}
		\end{equation}
	\end{enumerate}
	If the above holds, we get $\P(\cM) \le c^{c\|x\|_1} + \P(\cG^c)$, which allows us to conclude.
	
	\paragraph*{Penalized geodesics.}
	
	We now introduce the notion of penalized path. This is an idea which comes from the article \cite{AndjelVares} by Andjel and Vares in their proof of Theorem \ref{Théorème Andjel/Vares}. A penalized path is a path which does not take the pattern. In other words, this is a path $\pi$ such that $N^\mathfrak{P}(\pi)=0$. This allows us to define the penalized passage time for every $z \in \Z^d$:
	\[t_P(0,z) = \inf \{T(\pi) \, : \, \pi \text{ is a penalized path from $0$ to $z$}\},\] with the convention $\inf \emptyset = \infty$. Then, for every $z \in \Z^d$, if it exists, a penalized geodesic from $0$ to $z$ is a penalized path $\gamma$ from $0$ to $z$ such that $T(\gamma)=t_P(0,z)$. 
	With these definitions, we have 
	\begin{equation}
		\cM \subset \{\text{$(0,x) \in \mathfrak{C}$ and } t_P(0,x)=t(0,x)\}. \label{eq: nouveau sketch of proof 4 2.}
	\end{equation}

	\paragraph*{Shortcuts.}
	
	A good way to get that the event $\{\text{$(0,x) \in \mathfrak{C}$ and } t_P(0,x)=t(0,x)\}$ does not occur is to prove that a penalized geodesic has a shortcut. The formal definition of a shortcut is given in Section \ref{Sous-section $k$-penalized paths.}. Informally, a shortcut for a penalized geodesic $\gamma$ is a path going from a vertex $u$ of $\gamma$ to another vertex $v$ of $\gamma$ which takes the pattern and which has a passage time lower than the passage time of the subpath of $\gamma$ going from $u$ to $v$. Hence, if a penalized geodesic $\gamma$ from $0$ to $x$ has a shortcut, it implies that there exists a path from $0$ to $x$ which is not penalized and such that its passage time is strictly lower than the passage time of $\gamma$. It gives $t(0,x) < t_P(0,x)$. 

	\paragraph*{Events $\cG$ and $\cM(\l)$.}
	A successful box for a path $\pi$ is a box satisfying one of the following two conditions:
	\begin{itemize}
		\item it is a typical box,
		\item the path $\pi$ has a shortcut taking the pattern inside the box. 
	\end{itemize}
	We say that a box is shortcut-equipped for a path $\pi$ or not shortcut-equipped for $\pi$ depending on whether the second condition is satisfied or not. 
	We define $\cG$ as the event on which $(0,x) \in \mathfrak{C}$ and there exists a penalized geodesic $\pi$ whose passage time is finite and which crosses at least $q$ successful boxes for $\pi$. On this event, we define the selected penalized geodesic denoted by $\gamma$: it is the first (for an arbitrary deterministic order) of the penalized geodesics satisfying the condition which appears in the definition of $\cG$. We define the sequence of successful boxes crossed by $\gamma$ as the sequence of the first $q$ successful boxes crossed by $\gamma$ indexed in the order in which they are crossed by $\gamma$. It allows us to define, for every $\l \in \{1,\dots,q\}$ the event
	\[\cM(\l)=\cG \cap \{\text{the $\l$ first successful boxes of the sequence of $\gamma$ are not shortcut-equipped for $\gamma$}\}.\]
	This gives us $\l$ opportunities to modify the environment in each of these $\l$ typical boxes to create a shortcut for $\gamma$. The aim is now reduced to proving \eqref{eq: nouveau sketch of proof 1.}.
	
	\paragraph*{$k$-boxes.}
	
	We describe a small change of the plan above. This change does not create any complications. In particular, the entire plan describe above works with these new objects. The advantage is to avoid a number of complications, such as, for example, those related to what happens at the boundary of a box when we modify the environment in it.
	
	The idea is to only consider a family of boxes (called the $k$-boxes, see Section \ref{Sous-section $k$-penalized paths.}) that partitions $\Z^d$. We also replace "penalized paths" -the paths which does not take the pattern- by "$k$-penalized paths" -the paths which does not take any pattern contained in a $k$-box-. We similarly replace "penalized geodesics" by "$k$-penalized geodesics" : the geodesics which does not take any pattern contained in a $k$-box. We say "$k$-geodesic" instead of "$k$-penalized geodesic" for short. 

	\paragraph*{Modification and stability.}
	For all $\l$, we have $\cM(\l) \subset \cM(\l-1)$.
	Thus \eqref{eq: nouveau sketch of proof 1.} is equivalent to the existence of a constant $\eta>0$ (by taking $\eta=\frac{1}{c}-1$) such that 
	\begin{equation}
		\P (\cM(\l-1) \setminus \cM(\l)) \ge \eta \P (\cM(\l)). \label{eq: nouveau sketch of proof 2.}
	\end{equation}
	Fix $\l \in \{1,\dots,q\}$ and denote by $B_s$ the $\l$-th successful $k$-box crossed by $\gamma$. The aim is to prove \eqref{eq: nouveau sketch of proof 2.}. The idea is to resample the passage times of edges of $B_s$ in an environment in which $\cM(\l)$ occurs to get a new environment in which $\cM(\l-1) \setminus \cM(\l)$ occurs. When the resampled passage times satisfy good conditions (to be determined), the following properties are satisfied:
	\begin{enumerate}
		\item The event $\cG$ still occurs and the selected $k$-geodesic is still $\gamma$ in the new environment.
		\item The box $B_s$ is shortcut-equipped for $\gamma$ in the new environment.
		\item The sequence of successful boxes crossed by $\gamma$ is the same in the two environments. 
		\item The event $\cM(\l-1) \setminus \cM(\l)$ occurs in the new environment.
	\end{enumerate}
	By the fourth property, we get roughly
	\[\P(\cM(\l)) \P(\text{good conditions on the resampled passage times}) \le \P(\cM(\l-1) \setminus \cM(\l)).\]
	Since $\eta$ is fixed according to the probability of the good conditions on the resampled passage times, which is positively bounded from below independently of the box, we get \eqref{eq: nouveau sketch of proof 2.}. See the proof of Lemma \ref{Lemme majoration de la probabilité de l'événement par lambda puissance Q.} using Lemma \ref{Lemme pour les inégalités avec les indicatrices dans la modification.} in Section \ref{Sous-section réduction dans la preuve.}.

	The third property follows from the first two. Indeed, the box $B_s$ is typical in the first environment (since it is a successful box for $\gamma$ and it is not shortcut-equipped for $\gamma$ as $\cM(\l)$ holds) and is shortcut-equipped for $\gamma$ in the new environment. Furthermore, the other boxes have the same status (successful or not for $\gamma$) in the two environments since the passage times of the edges of the other boxes have not been modified.
	
	The fourth property follows from the first three by similar ideas. We thus see that, in order to get the fourth property, we do not only need to get the second one. We also need the first and the third ones. We call these two additional properties "stability properties". %These "stability properties" are the most difficult properties to obtain.
	The proof is thus reduced to getting the first two properties.
	
	\paragraph*{Some more details.}
	
	Recall that we assume \eqref{eq: Hypothèses sur la loi.}. There are two cases to be considered differently:
	\begin{enumerate}[label=(INF)]
		\item $\L(\infty)>0$,
	\end{enumerate}
	\begin{enumerate}[label=(FU)]
		\item $\L(\infty)=0$ and the support of $\L$ is unbounded.
	\end{enumerate}
	(INF) stands for "infinite" and (FU) stands for "finite unbounded".
	
	In what follows, when we say "after the modification" or "in the new environment", we mean "in the new environment where passage times of the edges in the box $B_s$ have been resampled and on the event where the resampled passage times satisfy some good properties that we do not explicit here". In this paragraph, we focus on the first property of the previous paragraph. To get it, it is sufficient to prove the following properties:
	\begin{enumerate}[label=(\roman*)]
		\item The path $\gamma$ still has a finite passage time in the new environment. There are no difficulties with this property.
		\item The path $\gamma$ remains a $k$-penalized path in the new environment. Since the passage times of the $k$-boxes different from $B_s$ have not been modified, it is sufficient to prove that $\gamma$ does not take the pattern in $B_s$ in the new environment. This is based on the two following ideas.
		\begin{itemize}
			\item We identify {\em forbidden zones} which are subsets of $B_s$ where $\gamma$ can not go. In the case \ref{c: Case I.} the forbidden zones are simply balls whose edges have infinite passage time. In the case \ref{c: Case III.} we refer to Lemma \ref{Lemme pour les clusters dans les typical boxes dans le cas finin non borné.}. By definition, a typical box possesses many forbidden zones (see the third item of the definition of a typical box in Section \ref{Sous-section boîtes typiques.}).
			\item We make sure that, after a successful modification, there is a {\em unique} pattern inside $B_s$. The uniqueness is ensured by replacing the original pattern by a new larger pattern, containing the original one, and by requiring that the behavior of passage times in the boundary of the new pattern is very atypical (see Lemma \ref{Lemme pour dire qu'on ne perd pas de généralité avec les restrictions sur le motifs.} and in particular its last item). In the case \ref{c: Case III.} we just	require that the passage times on the boundary of the new pattern are very high and contained in a special interval (see Remark \ref{rem: Remarque sur les hypothèses concernant le motif.}
			and (AF-4’)). These will be the unique edges with passage times in this interval after the modification. In the case \ref{c: Case I.} we require the existence of a large connected component of edges with finite passage time (see Definition \ref{Définition boundary condition.} and (AI-4)). This will be the unique such large component not touching the boundary of $B_s$ after the modification.
	\end{itemize}	
		We place the pattern in a forbidden zone. By this we mean that, after the modification, the pattern lies in what was a forbidden zone before the modification. Recall that the pattern is unique in $B_s$ and that $\gamma$ does not enter into forbidden zones. Therefore $\gamma$ does not take the pattern in $B_s$ after the modification.
		\item A $k$-penalized path $\pi$ with finite passage time in the new environment is also a $k$-penalized path in the initial environment. Once again, it is sufficient to prove that $\pi$ does not take a pattern entirely contained in $B_s$ in the initial environment. The proof differs between the case \ref{c: Case I.} and the case \ref{c: Case III.}.
		\begin{itemize}
			\item In the case \ref{c: Case III.}, this is a consequence of the fact that $B_s$ is a typical box and that there is no pattern is a typical box. Indeed, the passage times on the boundary of the pattern are bigger (in the case \ref{c: Case III.}) than they can be in a typical box. 
			\item In the case \ref{c: Case I.}, it comes from the fact that, in the new environment, a path with a finite passage time taking edges in $B_s$ is very constrained (see Figure \ref{f: Modification.} where a path with a finite passage time can only take edges of the green, red, blue and orange parts). If $\pi$ does not take the pattern in the new environment, it can only take edges of $\gamma$. Since $\gamma$ does not take a pattern entirely contained in $B_s$ in the initial environment, neither does $\pi$.
		\end{itemize}
		\item A $k$-penalized path in the new environment has a passage time greater than or equal to the passage time of $\gamma$ in the new environment. The proof comes from the fact that a path with a reasonable passage time in the new environment is very constrained in $B_s$. The edges that do not belong to $\gamma$, the shortcut for $\gamma$ or the unique pattern in $B_s$ have a prohibitive passage time in the case \ref{c: Case III.} and an infinite passage time in the case \ref{c: Case I.}. Based on this observation and on the fact that $\gamma$ has a lower passage time in the new environment than in the initial environment, we simply prove that a $k$-penalized path can not save more time than $\gamma$ during the modification. 
		\item With the same ideas as above, we also prove that a $k$-geodesic in the new environment is also a $k$-geodesic in the initial environment.
	\end{enumerate}
	Indeed, by (i), (ii) and (iv) we get that $\gamma$ is a $k$-geodesic with a finite passage time in the new environment. Using the same arguments as before, we get that $\gamma$ crosses at least $q$ successful $k$-boxes in the new environment. Hence, the event $\cG$ occurs in the new environment. Furthermore, we also get that every $k$-geodesic crossing at least $q$ successful $k$-boxes in the new environment crosses at least $q$ successful $k$-boxes in the initial environment. Adding (iii) and (v), we get that the set of potential selected $k$-geodesics in the new environment is contained in the set of potential selected $k$ geodesic in the initial environment, and then $\gamma$ remains the first geodesic (and thus the selected one) among the geodesics of this set.
	
	\paragraph*{Advantages of a strategy using penalized geodesics.}
	
	In \cite{Jacq} the proof does not rely on penalized geodesics. Using penalized geodesics has two main advantages:
	\begin{itemize}
		\item In \cite{Jacq}, proving the result for all geodesics (and not only for one selected geodesic) requires further technicalities (see the use of concentric annuli in Section 2.1 in \cite{Jacq}). Here, it comes for free from the fact that the existence of one $k$-geodesic having a shortcut implies that $t(0,x)<t_P(0,x)$ and thus that every geodesic from $0$ to $x$ takes the pattern.
		\item It allows us to remove Assumption \eqref{eq: hypothèse de moment du premier article.}. Indeed, in item 3 in the paragraph on the modification and stability above, we need to have the same sequence of successful boxes crossed by $\gamma$ in the two environments. 
		Assume that we do not use penalized geodesics and, to make things easier, assume (only in this item) that we are in the case where there is a unique geodesic between any couple of vertices. Then the modification consists in replacing a subpath (denoted by $\OG$) of the geodesic from $0$ to $x$ (denoted by $\gamma$) by a path (denoted by $\pi$) with a shorter passage time which takes the pattern. It implies that in the new environment, $\OG$ does not belong to the new geodesic (which is the concatenation of the part of $\gamma$ from $0$ to $\pi$, then $\pi$ and then the part of $\gamma$ from $\pi$ to $x$). It can create a problem of stability if a box of the sequence of successful boxes crossed by $\gamma$ was crossed by $\OG$: the sequence of successful boxes crossed by the geodesic from $0$ to $x$ would not be the same in the two environments. To avoid this problem when we do not use penalized geodesics, we use the Cox-Durett shape theorem (Theorem 2.16 in \cite{50years}) in order to control the length of geodesics excursions from a box (see for example the proof of Lemma 2.1 in \cite{Jacq}). This is why we need Assumption \eqref{eq: hypothèse de moment du premier article.} in the strategy developed in \cite{Jacq}.
		
		Hence, by making a modification which guarantees that the penalized geodesic is the same in the initial environment as in the modified one, we avoid this problem without using the Cox-Durett shape theorem and thus without requiring Assumption \eqref{eq: hypothèse de moment du premier article.}. %Furthermore, it allows us to make a similar proof in the case with unbounded passage times with $\L(\infty)=0$ and in the case where $\L(\infty)>0$. 
	\end{itemize}
		
	\subsection{Organization of the proof of Theorem \ref{Théorème à démontrer.}}\label{Sous-section organisation de la preuve.}
	
	Recall that, in this article, we assume \eqref{eq: Hypothèses sur la loi.}.
	One can check, using a standard re-normalization argument, that Theorem \ref{Théorème à démontrer.} is a simple consequence of the following proposition (see for example the proof of Theorem 2.3 in \cite{AndjelVares}).
	
	\begin{prop}\label{Gros théorème à démontrer, un seul motif.}
		Let $\mathfrak{P}=(\Lambda,u^\Lambda,v^\Lambda,\cA^\Lambda)$ be a valid pattern. Assume \eqref{eq: Hypothèses sur la loi.} and that $\L$ is useful. Then there exist $C>0$ and $D>0$ such that for all $n \ge 0$, for all $x$ such that $\|x\|_1=n$, 
		\begin{equation}
			\P \left((0,x) \in \mathfrak{C} \text{ and } \mbox{$\exists$ a geodesic $\gamma$ from $0$ to $x$ such that } N^\mathfrak{P}(\gamma)=0 \right) \le D \mathrm{e}^{-C n}. \label{Résultat gros héorème à démontrer, un seul motif.}
		\end{equation}
	\end{prop}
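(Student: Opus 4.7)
The plan is to follow the penalized-geodesic strategy outlined in Section \ref{Sous-section sketch of the proof.}. First I would reduce the event in \eqref{Résultat gros héorème à démontrer, un seul motif.} to the event $\{(0,x) \in \mathfrak{C}\text{ and } t_P(0,x) = t(0,x)\}$, where $t_P(0,x)$ denotes the infimum of $T(\pi)$ over paths $\pi$ from $0$ to $x$ with $N^\mathfrak{P}(\pi) = 0$. The key observation is that if some $k$-penalized geodesic from $0$ to $x$ admits a shortcut taking the pattern, the resulting non-penalized path yields $t(0,x) < t_P(0,x)$. I would then partition $\Z^d$ into $k$-boxes of a large fixed side-length, define $k$-penalized paths as those carrying no pattern entirely contained in a $k$-box, introduce typical boxes as those in which the environment is macroscopically good and contains many ``forbidden zones'' that any finite-passage-time path must avoid (balls of infinite-time edges in case (INF); an appropriate cluster structure in case (FU)), and call a box successful for a path $\pi$ if it is typical or if $\pi$ has a shortcut through the pattern inside it.

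Second, a standard renormalization for supercritical Bernoulli percolation produces an event $\cG$ of probability at least $1 - \beta_1 e^{-\beta_2 \|x\|_1}$ on which a finite-time $k$-penalized geodesic from $0$ to $x$ exists and crosses at least $q \ge c\|x\|_1$ successful boxes. On $\cG$ a deterministic selection rule picks a $k$-geodesic $\gamma$ and an ordered list $B_1,\dots,B_q$ of its first $q$ successful boxes, and for $\l = 0,\dots,q$ one sets $\cM(\l) = \cG \cap \{B_1,\dots,B_\l \text{ are not shortcut-equipped for } \gamma\}$. It suffices to establish a uniform geometric estimate $\P(\cM(\l-1) \setminus \cM(\l)) \ge \eta\, \P(\cM(\l))$ for some constant $\eta > 0$ independent of $\l$ and $x$; iterating and combining with the bound on $\P(\cG^c)$ then yields \eqref{Résultat gros héorème à démontrer, un seul motif.}.

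The geometric estimate is obtained by resampling the passage times of the edges in $B_\l$. On $\cM(\l)$ this box is typical but not shortcut-equipped, so one demands that the resampled values simultaneously (a) build an explicit shortcut for $\gamma$ by placing a translate of the pattern inside a forbidden zone of $B_\l$, (b) make this the unique pattern in $B_\l$ by surrounding it with an atypical boundary---high values in a narrow interval unreachable inside typical boxes in case (FU); a distinguished large finite cluster in case (INF)---and (c) render every alternative route through $B_\l$ infeasible or too slow, by assigning infinite (in (INF)) or prohibitive (in (FU)) passage times to the remaining edges of $B_\l$. Conditions (a)--(c) have a joint probability bounded below independently of the box. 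The stability properties (i)--(v) of the sketch then guarantee that $\gamma$ is still the selected $k$-geodesic after modification, that $B_1,\dots,B_q$ is the same ordered list of successful boxes, and that $B_\l$ has become shortcut-equipped for $\gamma$, so a resampling inequality yields the bound above with $\eta$ equal to the probability of (a)--(c).

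The central difficulty is the rigorous verification of the stability properties, and it is here that the two cases (INF) and (FU) demand genuinely different arguments. One must rule out simultaneously that some new $k$-penalized path in the modified environment beats $\gamma$, and that any path takes a pattern in $B_\l$ other than the one just constructed. In case (INF) one exploits the sparsity of finite-time clusters inside the box to force a finite-time path through $B_\l$ to essentially follow $\gamma$ concatenated with the shortcut; in case (FU) one uses that the atypically high boundary edges surrounding the constructed pattern cannot coexist with any pattern realized inside a typical box. Designing an enlarged pattern and a boundary condition delicate enough to enforce uniqueness and stability yet common enough to keep the success probability uniformly positive is the main technical work of the proof.
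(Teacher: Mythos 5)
Your proposal is correct and follows essentially the same approach as the paper's proof: the reduction via penalized passage times, the partitions into $k$-boxes, the notions of typical/successful boxes and shortcuts, the nested events $\cM^k(\l)$, the resampling-based geometric contraction, and the case split between (INF) and (FU) for establishing the stability properties all match the paper's Section~\ref{Section cas général.}. The main thing left implicit in your sketch (but present in the paper via Lemma~\ref{l: Claim de la réduction.}) is that one must work with all $K$ shifted partitions and take a union bound over $k$, so that the pigeonhole argument guarantees many typical boxes land in a single partition.
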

	
	Thus, the aim is now to prove Proposition \ref{Gros théorème à démontrer, un seul motif.}. Recall that there are two cases to be considered differently:
	\begin{enumerate}[label=(INF)]
		\item\label{c: Case I.} $\L(\infty)>0$,
	\end{enumerate}
	\begin{enumerate}[label=(FU)]
		\item\label{c: Case III.} $\L(\infty)=0$ and the support of $\L$ is unbounded.
	\end{enumerate}
	
	The proof of Proposition \ref{Gros théorème à démontrer, un seul motif.} is the aim of Section \ref{Section cas général.}. This section is divided in two parts. 
	
	Section \ref{Sous-section settings for the proof dans les cas a et b.} is devoted to patterns and typical boxes. We replace the original pattern by a larger pattern containing the original one and which satisfies several assumptions. Some of the assumptions are simply convenient: they simplify some parts of the proof. The assumption on the boundary is more crucial as explained in item (ii) in the paragraph "Some more details" in Section \ref{Sous-section sketch of the proof.}. In the case \ref{c: Case III.}, the requirement on the passage times on the boundary depends on the size of the boxes we consider in the proof. But the size of the boxes depends on the notion of typical boxes which in turn depends on parts of the definition of the pattern we consider. The definitions are thus intertwined. This is why we first start defining the new pattern in Section \ref{Sous-section hypothèses sur les motifs.} (postponing the boundary conditions in the case \ref{c: Case III.}), we then define and study typical boxes in Section \ref{Sous-section boîtes typiques.} and we finally choose the boundary of the pattern in the case \ref{c: Case III.} in Section \ref{Sous-section bordure des motifs.}.
	%In the first part in Section \ref{Sous-section settings for the proof dans les cas a et b.}, we begin by replacing the original pattern by a larger pattern containing the original one and which satisfies several assumptions. The aim of some of these assumptions is to make the constructions in the proof easier but the other assumptions are necessary because it allows us to choose passage times with a very atypical behavior on the boundary of the pattern (see item (ii) in the paragraph "Some more details" in Section \ref{Sous-section sketch of the proof.}) and we also get a control on the passage time of a path entirely contained in the pattern, which allows us to control the passage time of a shortcut for the geodesic. To choose the behavior of the passage times on the boundary of the pattern in the case \ref{c: Case III.}, some constants related to the typical boxes must be fixed. That is why the second part of Section \ref{Sous-section settings for the proof dans les cas a et b.} is dedicated to the typical boxes and their properties and why the boundary of the pattern in the case \ref{c: Case III.} is chosen in Section \ref{Sous-section bordure des motifs.}. 
	We then introduce the notions of $k$-penalized paths, shortcuts and successful boxes in Section \ref{Sous-section $k$-penalized paths.}.
	
	The second part of Section \ref{Section cas général.} is divided in four parts. In Section \ref{Sous-section réduction dans la preuve.}, the proof of Proposition \ref{Gros théorème à démontrer, un seul motif.} is reduced to the proof of a key lemma : Lemma \ref{Lemme pour les inégalités avec les indicatrices dans la modification.}. In this lemma, we introduce some sets of edges which correspond to the edges whose passage times have to be modified. The exact definitions of these sets are postponed to Section \ref{Sous-section modification}. It corresponds to the modification we want to make. The more difficult part in the proof of Lemma \ref{Lemme pour les inégalités avec les indicatrices dans la modification.} is item (iii), which is the key to get \eqref{eq: nouveau sketch of proof 2.}. To get this item and also item (i), we state and prove several properties which are consequences of the modification in Section \ref{Section conséquences de la modification.} before using them to conclude in Section \ref{Sous-section fin de la preuve du lemme.}.
	
	\subsection{Some tools and notations}\label{Sous-section Some tools and notations.}
	
	In this subsection, we recall some results and fix some notations. First, we denote by $\N$ the set of all non-negative integers, by $\N^*$ the set $\N \setminus \{0\}$, and by $\R_+$ the set of all $x \in \R$ such that $x \ge 0$. 
	
	For a self-avoiding\footnote{The definition can be extended to not necessarily self-avoiding paths by saying that a vertex $x$ is visited by $\pi$ before $y$ if there exists $i_0 \in \{0,\dots,k\}$ such that $x_{i_0}=x$ and for all $j \in \{0,\dots,k\}$, $x_j=y$ implies that $j>i_0$.} path $\pi=(x_0,...,x_k)$ going from $x_0$ to $x_k$, we say that $x_i$ is visited by $\pi$ before $x_j$ if $i<j$; we say that an edge $\{x_i,x_{i+1}\}$ is visited before an edge $\{x_j,x_{j+1}\}$ if $i<j$. A subpath of $\pi$ going from $x_i$ to $x_j$ (where $i,j \in \{0,\dots,k\}$ and $i<j$) is the path $(x_i,\dots,x_j)$ and is denoted by $\pi_{x_i,x_j}$.
	
	We say that a path, whose endpoints are denoted by $u$ and $v$, is oriented if this path has exactly $\|u-v\|_1$ edges. In other words, its number of edges is minimal among those of all the other paths linking $u$ and $v$.
	
	For a set $B$ of vertices, we denote by $\partial B$ its boundary, this is the set of vertices of $B$ which can be linked by an edge to a vertex which is not in $B$. We denote by $B^c$ the set of all vertices which does not belong to $B$. 
	When we define a set of vertices of $\Z^d$, sometimes we also want to say that an edge belongs to this set. So we make an abuse of notation by saying that an edge $e=\{u,v\}$ belongs to a set of vertices if $u$ and $v$ are in this set. Since now a subset $B$ of $\Z^d$ can be seen as a set of vertices or as a set of edges, we denote by $|B|_v$ the number of vertices of $B$ and by $|B|_e$ its number of edges. 
	
	Then, for all $c \in \Z^d$ and $r \in \R_+$, we denote 
	\begin{align*}
		B_\infty (c,r) & = \{u \in \Z^d \, : \, \|u-c\|_\infty \le r \}, \\
		B_1 (c,r) & = \{u \in \Z^d \, : \, \|u-c\|_1 \le r \},
	\end{align*}
	and for $n \in \N^*$, we denote by $\Gamma_n$ the boundary of $B_1(0,n)$, i.e.\ \begin{equation}
		\Gamma_n=\{u \in \Z^d \, : \, \|u\|_1=n \}. \label{définition Gamman}
	\end{equation}

	\paragraph*{Constants related to the distribution.}
	
	One can check that Lemma 5.5 in \cite{VdBK} can be adapted for a useful distribution $\L$ such that $\L(\infty) > 0$ and $\L([0,\infty))>p_c$. Thus there exist $\delta = \delta(\L) > 0$ and $D_0 = D_0(\L)$ fixed for the remaining of the article such that for all $u, \, v \in \Z^d$,
	\begin{equation}
		\P (\text{there exists a path $\pi$ from $u$ to $v$ such that } T(\pi) \le (\r+\delta) \|u-v\|_1) \le \mathrm{e}^{-D_0 \|u-v\|_1}. \label{Définition de delta}
	\end{equation}
	Furthermore, when $\r=0$,
	\begin{itemize}
		\item even if it means reducing $\delta$, in the cases \ref{c: Case I.} and \ref{c: Case III.}, we assume that $\delta>0$ is such that  $\L([0,\delta])<p_c$,
		\item even if it means reducing $\delta$, in the case \ref{c: Case I.}, we can fix $\nu_0$ such that
			\begin{equation}
				\nu_0 > \delta \text{ and } \L((\delta,\nu_0))>0. \label{On fixe nu0.}
		\end{equation}
		Note that it is possible since in the case \ref{c: Case I.}, we have $\L(0)+\L(\infty)<1$. Indeed, it comes from the fact that $\L$ is useful and that $\L([0,\infty)) > p_c$.
	\end{itemize}
	Then, still in the case where $\r=0$, we fix 
	\begin{equation}
		\beta>0, \, \beta'>0 \text{ and } \rho>0 \label{eq: On fixe les betas et rho.}
	\end{equation} such that \eqref{eq: Equation du lemme qui dit qu'il y a assez d'arêtes non nulles.} below holds with $\tau=\delta$. The existence of such constants is guaranteed by Lemma \ref{l: Lemme qui dit qu'il y a assez d'arêtes non nulles.} below whose proof is given in Appendix \ref{Annexe sur le lemme qui découle de Kesten Saint-Flour (5.8).}.

	\begin{lemma}\label{l: Lemme qui dit qu'il y a assez d'arêtes non nulles.}
		Assume that $\L$ is useful and that $\r=0$. Let $\tau>0$ such that $\L([0,\tau]) < p_c$. Then there exists $\beta>0$, $\beta'>0$ and $\rho>0$ such that for all $v,w \in \Z^d$,
		\begin{equation}
			\begin{split}
				\P(\text{there exists a self-avoiding path from $v$ to $w$ taking at most $\rho \|v-w\|_1$ edges $e$ such that} \\ T(e) > \tau) \le \beta' \mathrm{e}^{-\beta \|v-w\|_1}.
				\end{split}\label{eq: Equation du lemme qui dit qu'il y a assez d'arêtes non nulles.}
		\end{equation}
	\end{lemma}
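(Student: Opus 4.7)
The plan is to recognize \eqref{eq: Equation du lemme qui dit qu'il y a assez d'arêtes non nulles.} as a direct translation of a classical estimate for subcritical Bernoulli bond percolation into the language of first-passage percolation. Introduce the auxiliary Bernoulli percolation on $\Z^d$ in which an edge $e \in \cE$ is declared \emph{open} when $T(e) \le \tau$ and \emph{closed} when $T(e) > \tau$. By the hypothesis $\L([0,\tau]) < p_c$, together with the i.i.d.\ assumption on $(T(e))_{e\in\cE}$, this auxiliary percolation is strictly subcritical and translation-invariant.

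I would then apply Kesten's Proposition~(5.8) from his Saint-Flour notes, which says that for a strictly subcritical Bernoulli bond percolation on $\Z^d$ there exist constants $\beta > 0$, $\beta' > 0$ and $\rho > 0$, depending only on the parameter of the percolation, such that for every $z \in \Z^d$ and every integer $n \ge 1$,
\[
\P\bigl(\exists \text{ a self-avoiding path starting at } z, \text{ of length } \ge n, \text{ using at most } \rho n \text{ closed edges}\bigr) \le \beta' \mathrm{e}^{-\beta n}.
\]
To deduce \eqref{eq: Equation du lemme qui dit qu'il y a assez d'arêtes non nulles.} from this, observe that any self-avoiding path from $v$ to $w$ has at least $\|v-w\|_1$ edges, so the event considered in the lemma is contained in the event bounded by Kesten's estimate applied at $z = v$ with $n = \|v-w\|_1$. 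The degenerate case $v = w$ is trivial.

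The main (and essentially the only) subtlety is to verify that (5.8) of the Saint-Flour notes is stated in a form uniform in the starting vertex $z$ and with the two constants $\beta, \rho$ chosen \emph{simultaneously}; this is immediate by translation invariance of $\P$, but it is the only ingredient that needs to be spelled out in the appendix. Everything else is a direct translation: the edges with $T(e) > \tau$ are exactly the closed edges of the auxiliary percolation, and subcriticality of this percolation is precisely what the useful hypothesis provides through the choice of $\delta$ in \eqref{Définition de delta} and the subsequent assumption $\L([0,\delta]) < p_c$ when $\r = 0$.
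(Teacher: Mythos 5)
Your proposal is correct and follows essentially the same route as the paper's proof in Appendix~\ref{Annexe sur le lemme qui découle de Kesten Saint-Flour (5.8).}: both define the auxiliary Bernoulli percolation (equivalently, the paper's $0$--$1$-valued environment $\tilde T$) with parameter $\L([0,\tau])<p_c$, invoke Kesten's Proposition~(5.8), and conclude via the trivial bound that any self-avoiding path from $v$ to $w$ has at least $\|v-w\|_1$ edges. One small nit: the subcriticality used here comes directly from the lemma's hypothesis $\L([0,\tau])<p_c$, not from the choice of $\delta$ in \eqref{Définition de delta} (that choice only matters later when the lemma is applied with $\tau=\delta$), so the final sentence of your proposal conflates the lemma's hypothesis with its later application.
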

	
	\section{Proof of Proposition \ref{Gros théorème à démontrer, un seul motif.}}\label{Section cas général.}
	
	Let $\mathfrak{P}=(\Lambda,u^\Lambda,v^\Lambda,\cA^\Lambda)$ be a valid pattern. We assume \eqref{eq: Hypothèses sur la loi.} and that $\L$ is useful. Thus one of the cases \ref{c: Case I.} or \ref{c: Case III.} stated in Section \ref{Sous-section organisation de la preuve.} is realized. The proofs in these two cases are almost the same. However, throughout this section, it will sometimes be necessary to distinguish the cases.
	
	\subsection{Settings for the proof}\label{Sous-section settings for the proof dans les cas a et b.}
	
	\subsubsection{Assumptions on the patterns}\label{Sous-section hypothèses sur les motifs.}
	
	We begin by making some assumptions on $\mathfrak{P}$ for the remaining of the proof. At first sight, these assumptions can be seen as a restriction but Lemma \ref{Lemme pour dire qu'on ne perd pas de généralité avec les restrictions sur le motifs.} guarantees that we can make them with no loss of generality.
	
	\paragraph*{In the case \ref{c: Case I.}.}
	
	\begin{definition}[Boundary condition]\label{Définition boundary condition.}
		For every $s \in \Z^d$ and $\lll \ge 3$, define the set $\cS_{s,\lll}$ as the set of edges belonging to the path going from $s-(\lll-1)\epsilon_1+(\lll-1)\epsilon_2$ to $s+(\lll-1)\epsilon_1+(\lll-1)\epsilon_2$ in the shortest way by $2(\lll-1)$ steps in the direction $\epsilon_1$.
		
		Then, we say that $B_\infty(s,\lll)$ satisfies the boundary condition in the environment $T$ if for all edges $e$ belonging to $B_\infty(s,\lll)$ but not to $B_\infty(s,\lll-3)$,
		\begin{itemize}
			\item either $e$ belongs to $\cS_{s,\lll} \cup (s+\Z \epsilon_1)$ and $T(e)$ is finite,
			\item or $T(e)$ is infinite.
		\end{itemize}
	\end{definition}

	\begin{figure}
		\begin{center}
			\begin{tikzpicture}[scale=0.33]
				\draw[gray,fill=Gray] (0,0) rectangle (10,10);
				\draw[gray,fill=white] (1.5,1.5) rectangle (8.5,8.5);
				\draw[Red,line width=2pt] (0.5,9.5) -- (9.5,9.5);
				\draw[LimeGreen,line width=2pt] (0,5) -- (1.5,5);
				\draw[LimeGreen,line width=2pt] (8.5,5) -- (10,5);
				\draw (8.5,1.5) node[above left,scale=0.8] {$B_\infty(s,\lll-3)$};
				\draw (10,0) node[above left,scale=0.8] {$B_\infty(s,\lll)$};
				\draw (5,5) node {$\bullet$};
				\draw (5,5) node[below right] {$s$};	
			\end{tikzpicture}
		\caption{Example of a ball satisfying the boundary condition. The set $\cS_{s,\lll}$ is represented in red. The edges of the gray area have an infinite passage time and the edges in green and red have a finite passage time.}\label{f: Boundary condition.}
		\end{center}
	\end{figure}
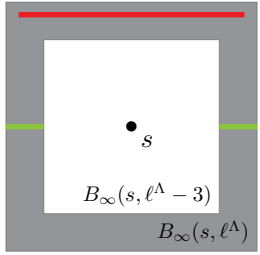

	\begin{remk}
		Let $s \in \Z^d$ and $\lll \ge 3$. If $B_\infty(s,\lll)$ satisfies the boundary condition in the environment $T$, then there is no path from $\partial B_\infty(s,\lll)$ to $\partial B_\infty(s,\lll)$ with finite passage time which takes an edge of $\cS_{s,\lll}$ (see Figure \ref{f: Boundary condition.} for a representation of the boundary condition in two dimensions).
	\end{remk}

	%\begin{definition}[Endpoints of a ball]\label{Définition endpoints of a ball.}
	%	For every $s \in \Z^d$ and $\lll \ge 1$, we say that $\theta_s (-\lll \epsilon_1)$ and $\theta_s (\lll \epsilon_1)$ are the endpoints of the ball $B_\infty(s,\lll)$.
	%\end{definition}
	
	Let us consider the following assumptions:
	\begin{enumerate}[label=(AI-\arabic*)]
		\item\label{OP1} there exists an integer $\lll \ge 3$, fixed for the remaining of the proof, such that\footnote{We make a very slight abuse of notation: we also consider patterns where $0$ is in the center of $\Lambda$.} $\Lambda=B_\infty(0,\lll)$,
		\item\label{OP2} $\ulm = - \lll \epsilon_1$ and $\vlm = \lll \epsilon_1$,
		\item\label{OP7} there exist a constant $T^\Lambda>0$ and a path $\pi_\infty$ from $\ulm$ to $\vlm$ entirely contained in $\Lambda$ such that, when $\cA^\Lambda$ occurs,
		\begin{equation}
			T(\pi_\infty) < T^\Lambda. \label{On fixe TLambda dans le cas infini.}
		\end{equation}
		\item\label{OP3} if $\cA^\Lambda$ occurs, $\Lambda$ satisfies the boundary condition%\footnote{Je ne suis pas sûr que cela soit bien dit, on ne dirait pas que cela dépend de l'envionnement dit comme cela.}.
	\end{enumerate}
	
	\paragraph*{In the case \ref{c: Case III.}.}
	Let us consider the following assumptions:
	\begin{enumerate}[label=(AF-\arabic*)]
		\item\label{OP4} there exists an integer $\lll>0$, fixed for the remaining of the proof, such that $\Lambda=B_\infty(0,\lll)$,
		\item\label{OP8} $\ulm = - \lll \epsilon_1$ and $\vlm = \lll \epsilon_1$,
		\item\label{OP5} when $\cA^\Lambda$ occurs, there exists a constant $M^\Lambda$ such that for every edge $e$ belonging to $\Lambda$ but not to $\partial \Lambda$, $T(e) \le M^\Lambda$,
		\item\label{OP6} for all $M > 0$, the event $\cA^\Lambda \cap \{\forall e \in \partial \Lambda, \, T(e)>M\}$ has a positive probability. 
	\end{enumerate}

	\begin{remk}\label{rem: Remarque sur les hypothèses concernant le motif.}
		The aim of wanting the pattern to satisfy the condition \ref{OP6} above is to be able to choose the passage times of the edges on its boundary once we have fixed some constants. Thus, from Section \ref{Sous-section bordure des motifs.} onwards the condition \ref{OP6} is replaced by the condition (AF-4') stated at this point.
	\end{remk}

	\paragraph*{In the two cases.}
	
	\begin{lemma}\label{Lemme pour dire qu'on ne perd pas de généralité avec les restrictions sur le motifs.}
		Let $\mathfrak{P}_0=(\Lambda_0,\ulm_0,\vlm_0,\cA^\Lambda_0)$ be a valid pattern. There exists a pattern $\mathfrak{P}=(\Lambda,\ulm,\vlm,\cA^\Lambda)$ such that:
		\begin{itemize}
			\item $\Lambda_0 \subset \Lambda$,
			\item $\P\left(\cA^\Lambda\right)$ is positive,
			\item on $\cA^\Lambda$, any path from $\ulm$ to $\vlm$ optimal for the passage time among the paths entirely inside $\Lambda$ contains a subpath from $\ulm_0$ to $\vlm_0$ entirely inside $\Lambda_0$,
			\item $\cA^\Lambda \subset \cA^\Lambda_0$,
			\item in the case \ref{c: Case I.}, $\mathfrak{P}$ satisfies the conditions \ref{OP1}, \ref{OP2}, \ref{OP7} and \ref{OP3}, and in the case \ref{c: Case III.}, $\mathfrak{P}$ satisfies the conditions \ref{OP4}, \ref{OP8}, \ref{OP5} and \ref{OP6}.
		\end{itemize}
	\end{lemma}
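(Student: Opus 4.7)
My approach is to embed $\mathfrak{P}_0$ in a new pattern supported on a centered cube $\Lambda=B_\infty(0,\lll)$, with endpoints $\ulm=-\lll\epsilon_1$ and $\vlm=\lll\epsilon_1$, after a suitable translation of $\Lambda_0$ inside $\Lambda$. The event $\cA^\Lambda$ will be defined as the conjunction of $\cA^\Lambda_0$, a ``cheap corridor'' event on two explicit paths joining $\{\ulm,\vlm\}$ to $\{\ulm_0,\vlm_0\}$, and an ``expensive'' event on the remaining edges which forces any path optimal for the passage time among paths from $\ulm$ to $\vlm$ inside $\Lambda$ to traverse the corridors and hence cross $\Lambda_0$ exactly through $\ulm_0$ and $\vlm_0$.

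\textbf{Construction.} I first choose $\lll$ large, and translate $\Lambda_0$ so that it lies inside $B_\infty(0,\lll-3)$ (leaving room for the boundary condition in case \ref{c: Case I.}). Using the third validity condition of $\mathfrak{P}_0$ --- which gives either two distinct external normal unit vectors at $\ulm_0,\vlm_0$, or an unbounded support for $\L$ allowing me to enlarge $\Lambda$ even further and place $\Lambda_0$ with free room on all sides --- I build two edge-disjoint oriented paths $\pi_1,\pi_2$ contained in $\Lambda\setminus\Lambda_0$, with $\pi_1$ going from $\ulm$ to $\ulm_0$ and $\pi_2$ from $\vlm_0$ to $\vlm$; in case \ref{c: Case I.}, I additionally place them along the skeleton $\cS_{0,\lll}\cup\Z\epsilon_1$ of Definition \ref{Définition boundary condition.}. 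I then set $\cA^\Lambda$ to be the intersection of three events on pairwise disjoint sets of edges:
\begin{enumerate}
\item $\cA^\Lambda_0$ on the edges of $\Lambda_0$;
\item a ``corridor'' event constraining the passage times along $\pi_1\cup\pi_2$ to lie in a bounded interval of finite values with positive $\L$-mass;
\item an ``expensive'' event on the remaining edges of $\Lambda$: in case \ref{c: Case I.}, all such edges outside the skeleton have infinite passage time, and the skeleton itself is required to satisfy Definition \ref{Définition boundary condition.}; in case \ref{c: Case III.}, the edges of $\partial\Lambda\setminus(\pi_1\cup\pi_2)$ have passage times exceeding a large threshold $M$, to be calibrated only in Section \ref{Sous-section bordure des motifs.} as foreshadowed in Remark \ref{rem: Remarque sur les hypothèses concernant le motif.}.
\end{enumerate}
Independence across disjoint edge sets together with our standing assumptions yields $\P(\cA^\Lambda)>0$: the first event by validity of $\mathfrak{P}_0$, the second since $\L$ places positive mass on some interval of finite values, the third by $\L(\infty)>0$ in case \ref{c: Case I.} and by the unbounded support of $\L$ in case \ref{c: Case III.}.

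\textbf{Verification and main obstacle.} The reference path $\pi_1\cdot\pi^*\cdot\pi_2$, where $\pi^*$ is any finite-passage-time path from $\ulm_0$ to $\vlm_0$ inside $\Lambda_0$ provided by the second validity condition, has a finite and uniformly bounded passage time on $\cA^\Lambda$, yielding \ref{OP7}. Any competing path inside $\Lambda$ from $\ulm$ to $\vlm$ that does not take $\pi_1$ into $\ulm_0$, a path inside $\Lambda_0$ from $\ulm_0$ to $\vlm_0$, and $\pi_2$ out of $\vlm_0$, must use an ``expensive'' edge; such edges have either infinite passage time (case \ref{c: Case I.}) or cost exceeding that of the reference path once $M$ is chosen large enough (case \ref{c: Case III.}), which enforces the required subpath property. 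The inclusion $\cA^\Lambda\subset\cA^\Lambda_0$, together with \ref{OP1}, \ref{OP2}, \ref{OP3} in case \ref{c: Case I.} and \ref{OP4}, \ref{OP8}, \ref{OP5}, \ref{OP6} in case \ref{c: Case III.}, are immediate from the construction. The hardest point is the calibration in case \ref{c: Case III.}: the threshold $M$ must dominate the passage time of the reference path, a quantity that ultimately depends on the size parameter of the typical boxes fixed later; hence the precise choice of $M$ must be deferred to Section \ref{Sous-section bordure des motifs.}, and here we merely rely on the positive-probability flexibility given by \ref{OP6} to accommodate arbitrarily large thresholds.
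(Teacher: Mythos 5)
Your construction is essentially the paper's: center $\Lambda_0$ in a cube $\Lambda=B_\infty(0,\lll)$, put $\ulm=-\lll\epsilon_1$, $\vlm=\lll\epsilon_1$, run two cheap corridors from $\ulm$ to $\ulm_0$ and from $\vlm_0$ to $\vlm$, and make everything else prohibitively expensive so that any optimal path is forced through $\Lambda_0$ via $\ulm_0$ and $\vlm_0$. The case \ref{c: Case I.} part is fine.

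There is, however, a concrete flaw in your specification of the ``expensive'' event in case \ref{c: Case III.}: you only constrain the edges of $\partial\Lambda\setminus(\pi_1\cup\pi_2)$ to exceed $M$, and leave the interior edges of $\Lambda\setminus(\Lambda_0\cup\pi_1\cup\pi_2\cup\partial\Lambda)$ unconstrained. With $\lll$ large this leaves a vast region of $\Lambda$ on which $\cA^\Lambda$ says nothing, and a path from $\ulm$ to $\vlm$ can leave $\ulm$ directly into the interior (the edge $\{\ulm,\ulm+\epsilon_1\}$ does not belong to $\partial\Lambda$ in the paper's convention), wander through these unconstrained edges at typically small cost, and reach $\vlm$ without ever entering $\Lambda_0$. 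Your verification step ``any competing path must use an expensive edge'' is then false and the third bullet of the lemma fails. The fix is what the paper does: require \emph{every} edge of $\Lambda$ not in $\partial\Lambda\cup\Lambda_0\cup\pi_f$ to have passage time in an interval $(M^\Lambda-1,M^\Lambda)$ of positive $\L$-mass with $M^\Lambda-1>|\Lambda|_e M^\Lambda_0$, and require the edges of $\partial\Lambda$ to satisfy only the one-sided bound $T(e)\ge M^\Lambda$ (this one-sidedness is what makes \ref{OP6} hold, since the support is unbounded). Relatedly, you should route $\pi_1,\pi_2$ so that they carry no edge of $\partial\Lambda$; if a corridor edge lay in $\partial\Lambda$ and were confined to a bounded interval, the event in \ref{OP6} would be empty for large $M$. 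Finally, your ``main obstacle'' is overstated: the threshold here only needs to dominate $|\Lambda|_e$ times the corridor bound, a constant independent of the box size $N$; only the later replacement of \ref{OP6} by (AF-4') in Section \ref{Sous-section bordure des motifs.} involves $N$, and that is not part of this lemma.
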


	Consider a valid pattern $\mathfrak{P}_0$ and a pattern $\mathfrak{P}$ satisfying the conditions of Lemma \ref{Lemme pour dire qu'on ne perd pas de généralité avec les restrictions sur le motifs.} above. Then, by this lemma, for every path $\pi$, if a vertex $x$ satisfies the condition $(\pi;\mathfrak{P})$, $x$ satisfies the condition $(\pi;\mathfrak{P}_0)$. Thus we get $N^{\mathfrak{P}_0} (\pi) \ge N^\mathfrak{P} (\pi)$ and to prove Proposition \ref{Gros théorème à démontrer, un seul motif.} for the pattern $\mathfrak{P}_0$, it is sufficient to prove it for the pattern $\mathfrak{P}$. That is why from now on, we can assume that the pattern $\mathfrak{P}$ introduced at the beginning of Section \ref{Section cas général.} satisfies the conditions \ref{OP1}, \ref{OP2}, \ref{OP7}, \ref{OP3} in the case \ref{c: Case I.} and the conditions \ref{OP4}, \ref{OP8}, \ref{OP5} and \ref{OP6} in the case \ref{c: Case III.}. The proof of Lemma \ref{Lemme pour dire qu'on ne perd pas de généralité avec les restrictions sur le motifs.} is postponed in Appendix \ref{Annexe sur les surmotifs.}.
	
	For the remaining of the proof, fix $\lll$ given by \ref{OP1} and \ref{OP4}. In the case \ref{c: Case I.}, fix $T^\Lambda$ given by \ref{OP7} and in the case \ref{c: Case III.}, fix 
	\begin{equation}
		M^\Lambda \text{ satisfying \ref{OP5} and } T^\Lambda > |\Lambda|_e M^\Lambda.\label{On fixe lLambda et TLambda dans le cas fini non borné.}
	\end{equation}
	
	\subsubsection{Typical boxes}\label{Sous-section boîtes typiques.}
	
	Recall that $\delta$ is fixed at \eqref{Définition de delta} and that, when $\r=0$, $\nu_0$ is fixed at \eqref{On fixe nu0.}.

	\paragraph*{Technical lemma.}
	We state in this paragraph the lemma used to create a "forbidden zone" in the case \ref{c: Case III.} (see item (ii) of the seventh paragraph of Section \ref{Sous-section sketch of the proof.}).
	
	Fix $r_P$ an integer such that
		\begin{equation}
			r_P > \max \left( \lll+2, \frac{2 (|B_\infty(0,\lll+1)|_e (\r+1)+T^\Lambda)}{\delta}, \frac{|B_\infty(0,\lll+1)|_e (\r+1)+T^\Lambda+1}{2 \r} \right). \label{On fixe rP.}
	\end{equation}
	
	\begin{lemma}\label{Lemme pour les clusters dans les typical boxes dans le cas finin non borné.}
		In the case \ref{c: Case III.}, we can define an event $\cT$, whose probability is positive, only depending on the edges of $B_\infty(0,r_P)$ and such that for all $x,y \in \partial B_\infty(0,r_P)$, for every self-avoiding path $\pi$ going from $x$ to $y$ and using only edges in $\partial B_\infty(0,r_P)$ and every path $\tilde{\pi}$ going from $x$ to $y$ using edges of $B_\infty(0,r_P)$ and at least one edge which is not in $\partial B_\infty(0,r_P)$, we have \[T(\pi) < T(\tilde{\pi}).\]
	\end{lemma}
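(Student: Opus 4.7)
I will construct $\cT$ so that, inside $B_\infty(0, r_P)$, the edges on the outer shell $\partial B_\infty(0, r_P)$ have passage times just above the minimum $\r$, while all other edges of $B_\infty(0, r_P)$ have prohibitively large passage times. This realizes, in the case \ref{c: Case III.}, the informal ``forbidden zone'' idea from item (ii) of the seventh paragraph of Section \ref{Sous-section sketch of the proof.}: any path that enters the interior of $B_\infty(0, r_P)$ pays a cost too large to compete with a boundary-only path. Since the case \ref{c: Case I.} is treated separately via the boundary condition of Definition \ref{Définition boundary condition.}, the proof relies entirely on the two qualitative features of $\L$ available in case \ref{c: Case III.}: positive mass arbitrarily close to $\r$ and positive mass arbitrarily far out.

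\paragraph*{Construction and verification.} Let $N$ be the number of edges $e$ with both endpoints in $\partial B_\infty(0, r_P)$. Since $\r$ is the infimum of the support of $\L$, we have $\L([\r, \r+\epsilon]) > 0$ for every $\epsilon > 0$; fix any such $\epsilon$. Since in case \ref{c: Case III.} the support of $\L$ is unbounded, we may choose $M$ with
\[
M > N(\r+\epsilon) \quad \text{and} \quad \L([M, \infty)) > 0.
\]
Let $\cT$ be the event that $T(e) \le \r+\epsilon$ for every edge $e$ with both endpoints in $\partial B_\infty(0, r_P)$, and $T(e) \ge M$ for every other edge $e$ of $B_\infty(0, r_P)$. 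By independence of the family $\{T(e)\}_{e \in \cE}$, the probability $\P(\cT)$ is a finite product of strictly positive numbers, hence is strictly positive; and $\cT$ depends only on the edges of $B_\infty(0, r_P)$ by construction. Now fix $x, y \in \partial B_\infty(0, r_P)$ and let $\pi$ and $\tilde\pi$ be as in the statement. Since $\pi$ is self-avoiding and uses only edges of $\partial B_\infty(0, r_P)$, one has $|\pi| \le N$, hence on $\cT$
\[
T(\pi) \le |\pi|(\r+\epsilon) \le N(\r+\epsilon) < M.
\]
On the other hand $\tilde\pi$ contains at least one edge $e_0$ of $B_\infty(0, r_P)$ not in $\partial B_\infty(0, r_P)$, so on $\cT$, using that the passage times are nonnegative, $T(\tilde\pi) \ge T(e_0) \ge M$. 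Combining these two inequalities yields $T(\pi) < T(\tilde\pi)$, as required.

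\paragraph*{Main difficulty.} There is no real obstacle here: everything reduces to independence together with the two qualitative features of $\L$ recalled above. The slight subtlety is only the ordering of the quantifiers -- one must first choose $\epsilon$, then bound $|\pi| \le N$ by a quantity not depending on the environment, and only then choose $M$ large enough to beat $N(\r+\epsilon)$. This lemma plays, in case \ref{c: Case III.}, the same role as Definition \ref{Définition boundary condition.} in case \ref{c: Case I.}; the substantial work of the paper (typical boxes, selected $k$-geodesics, and the modification argument) builds on this construction rather than on its proof.
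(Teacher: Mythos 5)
Your proof is correct and takes essentially the same approach as the paper's: one conditions the boundary edges of $B_\infty(0,r_P)$ to have near-minimal passage times and the interior edges to have passage times exceeding the total cost of any boundary path, then reads off the strict inequality. The only cosmetic difference is that the paper hard-codes $\epsilon = 1$ (setting the boundary threshold at $\r+1$ and the interior threshold at $\nu > |\partial B_\infty(0,r_P)|_e(\r+1)$), whereas you keep $\epsilon$ general, which changes nothing.
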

	
	\begin{proof}
		Fix \[\nu > |\partial B_\infty(0,r_P)|_e (\r + 1).\] 
		Define the event $\cT$ as the event on which for all edge $e \in B_\infty(0,r_P)$, 
		\begin{itemize}
			\item $T(e) < \r + 1$ if $e \in \partial B_\infty(0,r_P)$,
			\item $T(e) > \nu$ else.
		\end{itemize}
		Since the support of $F$ is unbounded, the event $\cT$ has a positive probability.
		Assume that $\cT$ occurs.
		Then, let $x,y \in \partial B_\infty(0,r_P)$. Let $\pi$ be a self-avoiding path going from $x$ to $y$ and using only edges in $\partial B_\infty(0,r_P)$ and $\tilde{\pi}$ be a path going from $x$ to $y$ using edges of $B_\infty(0,r_P)$ and at least one edge $e'$ which is not in $\partial B_\infty(0,r_P)$.
		We get \[T(\pi) \le |\partial B_\infty(0,r_P)|_e (\r + 1),\] and 
		\[T(\tilde{\pi}) \ge T(e') \ge \nu > |\partial B_\infty(0,r_P)|_e (\r + 1).\]
		Hence, $T(\pi) < T(\tilde{\pi})$.
	\end{proof}
	
	\paragraph*{Boxes.}
	Recall that, when $\r=0$, $\rho$ is fixed at \eqref{eq: On fixe les betas et rho.}.
	Fix
	\begin{align}
		& 	r_1=1, \, r_2 > 4 d r_P \text{ and} \nonumber \\
		\text{when } \r>0, & \, r_3 > 2d(r_2+1) \label{On fixe r1 r2 et r3.} \\ 
		\text{when } \r=0, & \,r_3 > \max \left( 2d(r_2+1), \frac{2 \nu_0}{\rho \delta} \nonumber \right).
	\end{align}
	Note that, in particular, we have the following inequalities:
	\begin{itemize}
		\item $r_2 > 4 r_1 > 2 r_1$ since $r_1=1$ and $r_P \ge 1$,
		\item $r_3 > 2 r_2$ and $r_3 > r_2 + 1$.
	\end{itemize} 
	Then, for all $N \ge 1$, we define 
	\[\Bts=\{ v \in \Z^d \, : \, (s-r_3)N \le z < (s+r_3)N\}.\]
	and for $i \in \{1,2\}$, we define
	\[B_{i,s,N}=\{ v \in \Z^d \, : \, (s-r_i)N \le z \le (s+r_i)N\}.\]
	We use the word "box" to talk about $\Bts$.
	For $i \in \{1,2,3\}$, $\partial B_{i,s,N}$ is the set of vertices of $B_{i,s,N}$ having an adjacent vertex not contained in $B_{i,s,N}$. 

	\begin{definition}[Directed path and its selected straight segment]\label{d: central straight segment et chemin pi flèche.}
		Let $u \in \partial \Bds$ and $v \in \partial \Bus$. 
		\begin{itemize}
			\item We fix in an arbitrary way $\fpm(u,v)$ an oriented\footnote{Recall that, as it is defined in Section \ref{Sous-section Some tools and notations.}, a path, whose endpoints are denoted by $u$ and $v$, is oriented if this path has exactly $\|u-v\|_1$ edges.} path from $u$ to $v$ with a subpath $\fpm[u,v]$ between $\partial \Bds$ and $v$ using only edges in the same direction. We say that $\fpm(u,v)$ is the directed path between $u$ and $v$. The subpath $\fpm[u,v]$ is called the straight segment between $u$ and $v$ and its length is greater than or equal to $(r_2-r_1)N$.
			\item We define the selected straight segment between $u$ and $v$ as the set of vertices $c$ belonging to the straight segment between $u$ and $v$ and such that the distance for the norm $\|.\|_1$ between $c$ and $(\Bds)^c$ is at least $\displaystyle \frac{(r_2-r_1)N}{2} + d r_P$.
		\end{itemize}
	\end{definition}

	\begin{remk}\label{Remk: Selected straight segment non vide.}
		The selected straight segment is not the empty set since $v$ belongs to it. Indeed, the distance between $v$ and $(\Bds)^c$ is equal to $(r_2-r_1)N$ and $\displaystyle (r_2-r_1)N > \frac{(r_2-r_1)N}{2} + d r_P$ since $r_2 > 2 r_1$ and $r_2 > 4dr_P$ by \eqref{On fixe r1 r2 et r3.}.
	\end{remk}

	\begin{figure}
		\begin{center}
			\begin{tikzpicture}[scale=0.2]
				\draw (0,0) rectangle (16,16);
				\draw (6,6) rectangle (10,10);
				\draw (0,14) node[left] {$u$};
				\draw (10,7) node[right] {$v$};
				\draw (0,14) node[color=OrangeRed] {$\bullet$};
				\draw (10,7) node[color=OrangeRed] {$\bullet$};
				\draw (15,16) node[above] {$\Bds$};
				\draw (9,10) node[above] {$\Bus$};
				\draw[line width=2pt,color=OrangeRed] (0,14) -- (0,7) -- (10,7); 
				\draw (8,-3) node {(a)};
				\begin{scope}[xshift=25cm]
					\draw (0,0) rectangle (16,16);
					\draw (6,6) rectangle (10,10);
					\draw (0,14) node[left] {$u$};
					\draw (10,7) node[right] {$v$};
					\draw (0,14) node {$\bullet$};
					\draw (10,7) node[color=OrangeRed] {$\bullet$};
					\draw (15,16) node[above] {$\Bds$};
					\draw (9,10) node[above] {$\Bus$};
					\draw[line width=1.5pt] (0,14) -- (0,7) -- (10,7); 
					\draw[line width=2pt,color=OrangeRed] (0,7.01) -- (10,7.01); 
					\draw (8,-3) node {(b)};
				\end{scope}
				\begin{scope}[xshift=50cm]
					\draw (0,0) rectangle (16,16);
					\draw (6,6) rectangle (10,10);
					\draw (0,14) node[left] {$u$};
					\draw (10,7) node[right] {$v$};
					\draw (0,14) node {$\bullet$};
					\draw (10,7) node[color=OrangeRed] {$\bullet$};
					\draw (15,16) node[above] {$\Bds$};
					\draw (9,10) node[above] {$\Bus$};
					\draw[line width=1.5pt] (0,14) -- (0,7) -- (10,7); 
					\draw[line width=2pt,color=OrangeRed] (3.5,7.01) -- (10,7.01); 
					\draw[<->] (0,6) -- (3.5,6);
					\draw (2,5.5) node[below,fill=white,scale=0.7] {$\frac{(r_2-r_1)N}{2}+dr_P$};
					\draw (8,-3) node {(c)};
				\end{scope}
				\draw[line width=1.4pt,color=gray,dashed] (20.5,-2) -- (20.5,18);
				\draw[line width=1.4pt,color=gray,dashed] (45.5,-2) -- (45.5,18);
			\end{tikzpicture}
			\caption{Representation of the objects defined in Definition \ref{d: central straight segment et chemin pi flèche.} in two dimensions. (a) The directed path $\fpm(u,v)$ is in red. (b) The straight segment $\fpm[u,v]$ is in red. (c) The selected straight segment between $u$ and $v$ is in red.}\label{f: Selected straight segment.}
		\end{center}
	\end{figure}
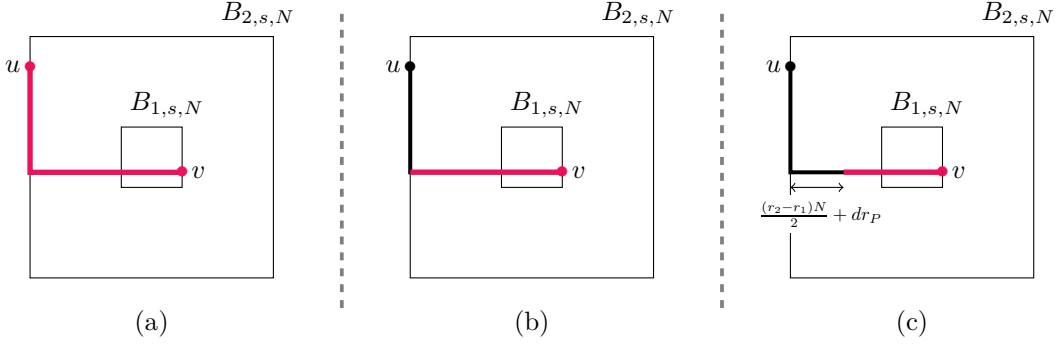
	
	\paragraph*{Typical boxes in the case \ref{c: Case I.}.}
	In these cases, a box $\Bts$ is typical if it verifies the following properties:
	\begin{enumerate}[label=(\roman*)]
		\item if $\r=0$, every path $\pi$ entirely contained in $\Bts$ from $u_\pi$ to $v_\pi$ with $\|u_\pi-v_\pi\|_1 \ge N$ has at least $\rho \|u_\pi-v_\pi\|_1$ edges whose passage time is greater than $\delta$,
		\item every path $\pi$ entirely contained in $\Bts$ from $u_\pi$ to $v_\pi$ with $\|u_\pi-v_\pi\|_1 \ge N$ has a passage time verifying: 
		\begin{equation}
			T(\pi) \ge (\r+\delta) \|u_\pi-v_\pi\|_1, \label{Définition chemins pas anormalement courts cas infini.}
		\end{equation}
		\item for all vertices $u \in \partial \Bds$ and $v \in \partial \Bus$, there exists a vertex $c$ belonging to the selected straight segment between $u$ and $v$ such that for every edge $e \in B_\infty(c,r_P)$, $T(e) = \infty$.
	\end{enumerate}

	\paragraph*{Typical boxes in the case \ref{c: Case III.}.}
	Fix $\cT$ the event given by Lemma \ref{Lemme pour les clusters dans les typical boxes dans le cas finin non borné.}. 
	We define a sequence $(\nu_1(N))_{N \in \N^*}$ such that:
	\begin{itemize}
		\item for all $N \in \N^*$, $\nu_1(N) > T^\Lambda$ if $\r>0$ and $\nu_1(N) > \max(\nu_0,T^\Lambda)$ if $\r=0$, 
		%\item for all $N \in \N^*$, 
		%\begin{equation}
		%	\L(\nu_1(N),\nu_2(N)) > 0, \label{Deuxième condition sur les suites nu1 de N et nu2 de N.}
		%\end{equation}
		%\item for all $N \in \N^*$, $\nu_1(N)$ is strictly greater than the passage time of every edge of the pattern\footnote{Le spécifier avant et on parle du pattern sans compter la bordure.}, 
		\item we have 
		\begin{equation}
			\lim\limits_{N \to \infty} \P \left( \sum_{e \in \Btz} T(e) \ge \nu_1(N) \right) = 0. \label{Troisième condition sur les suites nu1 de N et nu2 de N.}
		\end{equation}
	\end{itemize}
	Note that by \eqref{On fixe lLambda et TLambda dans le cas fini non borné.} and by the first item above, for all $N \in \N^*$, when $\cA^\Lambda$ occurs, $\nu_1(N)$ is strictly greater than the passage time of every edge belonging to $\Lambda$ but not to $\partial \Lambda$.
	Note also that $\L((\nu_1(N),\infty)) > 0$ for all $N \in \N^*$ since the support of $\L$ is unbounded. 
	
	In this case, a box $\Bts$ is typical if it verifies the following properties:
	\begin{enumerate}[label=(\roman*)]
		\item if $\r=0$, every path $\pi$ entirely contained in $\Bts$ from $u_\pi$ to $v_\pi$ with $\|u_\pi-v_\pi\|_1 \ge N$ has at least $\rho \|u_\pi-v_\pi\|_1$ edges whose passage time is greater than $\delta$,
		\item every path $\pi$ entirely contained in $\Bts$ from $u_\pi$ to $v_\pi$ with $\|u_\pi-v_\pi\|_1 \ge N$ has a passage time verifying: 
		\begin{equation}
			T(\pi) \ge (\r+\delta) \|u_\pi-v_\pi\|_1, \tag{\ref{Définition chemins pas anormalement courts cas infini.}}
		\end{equation}
		\item for all vertices $u \in \partial \Bds$ and $v \in \partial \Bus$, there exists a vertex $c$ belonging to the selected straight segment between $u$ and $v$ such that $\theta_{c} T \in \cT$,
		\item $\displaystyle \sum_{e \in \Bts} T(e) < \nu_1 (N)$.
	\end{enumerate}
	
	\paragraph*{Properties of typical boxes in both cases.}
	
	\begin{lemma}\label{Lemme propriétés boîtes typiques.}
		We have these two properties about typical boxes.
		\begin{enumerate}
			\item Let $s \in \Z^d$ and $N \in \N^*$. The typical box property only depends on the passage times of the edges in $\Bts$. 
			\item We have \[\lim\limits_{N \to \infty} \P \left( \Btz \text{ is a typical box} \right) = 1.\]
		\end{enumerate}
	\end{lemma}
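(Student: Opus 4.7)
The plan is to address the two properties separately.

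For Property 1, the strategy is simply to verify that each clause in the definition of a typical box is measurable with respect to the passage times of the edges in $\Bts$. Conditions (i), (ii), and condition (iv) in case \ref{c: Case III.} concern only paths lying in $\Bts$ or a sum over edges in $\Bts$, and are immediate. The only nontrivial clause is condition (iii), for which I need to check that for every vertex $c$ in the selected straight segment, $B_\infty(c,r_P) \subset \Bts$. The very definition of the selected straight segment in Definition \ref{d: central straight segment et chemin pi flèche.} forces the $\|\cdot\|_1$-distance from $c$ to $(\Bds)^c$ to be at least $\frac{(r_2-r_1)N}{2} + dr_P \ge dr_P \ge r_P$, and since this distance agrees with the $\|\cdot\|_\infty$-distance for a rectangular box, we get $B_\infty(c,r_P) \subset \Bds \subset \Bts$ (using $r_3 > r_2$). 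In case \ref{c: Case III.}, the event $\theta_c T \in \cT$ then depends only on the edges in $B_\infty(c,r_P) \subset \Bts$ by the choice of $\cT$ in Lemma \ref{Lemme pour les clusters dans les typical boxes dans le cas finin non borné.}; in case \ref{c: Case I.}, condition (iii) is manifestly local to $B_\infty(c,r_P)$.

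For Property 2, I would prove that each clause fails with probability tending to $0$ as $N \to \infty$, and then conclude by the union bound. For condition (i) in the case $\r=0$, the failure event forces the existence of a self-avoiding subpath with fewer than $\rho \|u_\pi - v_\pi\|_1$ edges of passage time greater than $\delta$ between two fixed vertices $u_\pi, v_\pi \in \Btz$ with $\|u_\pi - v_\pi\|_1 \ge N$, so Lemma \ref{l: Lemme qui dit qu'il y a assez d'arêtes non nulles.} (applied with $\tau = \delta$) bounds this probability by $\beta' \mathrm{e}^{-\beta N}$, and summing over the $O(N^{2d})$ pairs in $\Btz$ still tends to $0$. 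Condition (ii) is handled in the same way using \eqref{Définition de delta} in place of Lemma \ref{l: Lemme qui dit qu'il y a assez d'arêtes non nulles.}. Condition (iv) in case \ref{c: Case III.} is exactly the content of \eqref{Troisième condition sur les suites nu1 de N et nu2 de N.}.

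The main step, which I expect to be the principal obstacle, is condition (iii), since it is the only one that requires exploiting the positive probability of an atypical local configuration over many independent trials. For a fixed pair $(u,v) \in \partial \Bdz \times \partial \Buz$, the selected straight segment is an axis-parallel line segment of length at least $\frac{(r_2-r_1)N}{2} - dr_P$. Along this segment I would select $K_N = \Omega(N)$ vertices $c_1,\dots,c_{K_N}$ at pairwise $\|\cdot\|_\infty$-distance strictly greater than $2r_P$, so that the balls $B_\infty(c_i, r_P)$ are pairwise disjoint and the corresponding edge sets are disjoint. In case \ref{c: Case I.}, each event $\{T(e) = \infty$ for every $e \in B_\infty(c_i, r_P)\}$ has positive probability $p = \L(\{\infty\})^{|B_\infty(0, r_P)|_e}$ and, by disjointness, these events are independent; in case \ref{c: Case III.}, each event $\{\theta_{c_i} T \in \cT\}$ has positive probability $\P(\cT)$ and is likewise independent by translation invariance of the product measure. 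Hence for a fixed pair $(u,v)$ the probability that no $c_i$ is suitable is at most $(1-p)^{K_N}$, which decays exponentially in $N$. A final union bound over the $O(N^{2(d-1)})$ pairs of boundary vertices $(u,v)$ shows that condition (iii) fails with probability tending to $0$. The delicate point is that the constants must be calibrated so that the selected straight segment hosts enough disjoint balls for the exponential decay to beat the polynomial union bound, which is precisely what the choices in \eqref{On fixe r1 r2 et r3.} and \eqref{On fixe rP.} are designed to ensure.
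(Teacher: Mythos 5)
Your proposal follows the same structure as the paper's proof: Property 1 is checked clause-by-clause, with the only nontrivial observation being that the selected straight segment's distance margin $dr_P$ forces $B_\infty(c,r_P)\subset\Bds\subset\Bts$; Property 2 is handled by a union bound, using Lemma~\ref{l: Lemme qui dit qu'il y a assez d'arêtes non nulles.} for (i), \eqref{Définition de delta} for (ii), a linear-in-$N$ family of pairwise disjoint $B_\infty(\cdot,r_P)$-balls along the selected straight segment together with independence for (iii), and \eqref{Troisième condition sur les suites nu1 de N et nu2 de N.} for (iv). This is the argument the paper gives, down to the construction of the linearly many disjoint balls (the paper's $V(\fpm(u,v))$).
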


	\begin{proof}
		\,
		\begin{enumerate}
			\item Properties (i) and (ii) in the two cases and property (iv) in the case \ref{c: Case III.} only depend on the edges of $\Bts$. Then, for all vertices $u \in \partial \Bds$ and $v \in \partial \Bus$, every vertex $c$ belonging to the selected straight segment between $u$ and $v$ has a distance with $(\Bds)^c$ greater than or equal to $\displaystyle \frac{(r_2-r_1)N}{2}+dr_P$. Hence, property (iii) only depends on the edges of $\Bds$ and $\Bds \subset \Bts$.
			\item First, let us prove that, in the two cases, the probability that $(i)$ is satisfied by $B_{3,0,N}$ goes to $1$. For this item, assume that $\r=0$. Let $\Pi$ denote the set of self-avoiding paths entirely contained in $B_{3,0,N}$. For a path $\pi$ going from a vertex $u_\pi$ to a vertex $v_\pi$, we say that $\pi$ satisfies the property $\cP_\delta$ if $\pi$ takes at least $\rho \|u_\pi-v_\pi\|_1$ edges $e$ such that $T(e)>\delta$. Then, using Lemma \ref{l: Lemme qui dit qu'il y a assez d'arêtes non nulles.},
			\begin{align*}
				& \P( B_{3,0,N} \mbox{ does not satisfy } (i) ) \\ \le & \sum_{\substack{u_\pi,v_\pi \in B_{3,0,N} \\ \|u_\pi-v_\pi\|_1 \ge N}} \P \left( \text{$\cP_\delta$ is not satisfied by a path of $\Pi$ whose endpoints are $u_\pi$ and $v_\pi$} \right) \\
				\le & \sum_{\substack{u_\pi,v_\pi \in B_{3,0,N} \\ \|u_\pi-v_\pi\|_1 \ge N}} \P \left( \text{$\cP_\delta$ is not satisfied by a path whose endpoints are $u_\pi$ and $v_\pi$} \right) \\
				\le & |B_{3,0,N}|_v^2 \beta' \mathrm{e}^{- \beta N} \xrightarrow[N \to \infty]{} 0, 
			\end{align*}
			since $|B_{3,0,N}|_v$ is bounded by a polynomial in $N$. 
			
			Now, for the remaining of this proof, $\r$ can be positive. Using \eqref{Définition de delta} and a similar computation as above, we get that \[\P( B_{3,0,N} \mbox{ does not satisfy } (ii) ) \xrightarrow[N \to \infty]{} 0.\]
			
			Recall Definition \ref{d: central straight segment et chemin pi flèche.}. To prove that the probability that $(iii)$ is satisfied by $\Btz$ goes to $1$, we begin by associating in a deterministic way to each couple of vertices $(u,v) \in \partial \Bdz \times \partial \Buz$ a set of vertices, denoted by $V(\fpm(u,v))$ such that:
			\begin{itemize}
				\item every vertex of $V(\fpm(u,v))$ belongs to the selected straight segment between $u$ and $v$,
				\item for all $z_1,z_2 \in V(\fpm(u,v))$, $B_\infty(z_1,r_P) \cap B_\infty(z_2,r_P) = \emptyset$,
				\item there can be no other set satisfying the two conditions above containing strictly more vertices than $V(\fpm(u,v))$.
			\end{itemize}
			Note that there exists a constant $K_1$ only depending on $r_1$, $r_2$ and $r_P$ such that $|V(\fpm(u,v))| \ge K_1N$. In the case \ref{c: Case I.}, we denote by $\cT_\infty$ the event on which for all $e \in B_\infty(0,r_P)$, $T(e) = \infty$. We use the notation $\overline{\cT}$ to designate the event $\cT$ in the case \ref{c: Case III.} and to designate $\cT_\infty$ in the case \ref{c: Case I.}, which allows us to conclude this part of proof in the two cases. 
			We have
			\begin{align*}
				\P( B_{3,0,N} \mbox{ does not satisfy } (iii) ) \le & \sum_{\substack{u \in \partial \Bdz \\ v \in \partial \Buz}} \P \left(\forall c \in \fpm(u,v), \, \theta_c \overline{\cT} \text{ does not occur} \right) \\
				\le & \sum_{\substack{u \in \partial \Bdz \\ v \in \partial \Buz}} \P \left(\forall c \in V(\fpm(u,v)), \, \theta_c \overline{\cT} \text{ does not occur} \right).
			\end{align*}
		Fix $u \in \partial \Bdz$ and $v \in \partial \Buz$. Since for all $z_1,z_2 \in V(\fpm(u,v))$, $B_\infty(z_1,r_P) \cap B_\infty(z_2,r_P) = \emptyset$, the family of events $\left( \left\{ \theta_c \overline{\cT} \text{ does not occur} \right\} \right)_{c \in V(\fpm(u,v))}$ are independent and thus 
		\[\P \left(\forall c \in V(\fpm(u,v)), \, \theta_c \overline{\cT} \text{ does not occur} \right) \le \left(1-\P(\overline{\cT})\right)^{|V(\fpm(u,v))|} \le \left(1-\P(\overline{\cT})\right)^{K_1N}.\]
		Since $\P(\overline{\cT})>0$, we get the existence of a constant $K_2$ not depending on $u$, $v$ and $N$ such that:
		\[\P \left(\forall c \in V(\fpm(u,v)), \, \theta_c \overline{\cT} \text{ does not occur} \right) \le \mathrm{e}^{-K_2N}.\]
		Hence, 
		\[\P( B_{3,0,N} \mbox{ does not satisfy } (iii) ) \le |\partial \Bdz|_v |\partial \Buz|_v \mathrm{e}^{-K_2N} \xrightarrow[N \to \infty]{} 0,\]
		since $|\partial \Bdz|_v |\partial \Buz|_v$ is bounded by a polynomial in $N$.
		
		Finally, in the case \ref{c: Case III.}, we get that the probability that $(iv)$ is satisfied by $\Btz$ goes to $1$ by \eqref{Troisième condition sur les suites nu1 de N et nu2 de N.}.
		\end{enumerate}
	\end{proof}

	\paragraph*{Crossing a box.}
	A self-avoiding path crosses a box $\Bts$ if it visits one vertex of $\partial \Bts$, then one of $\Bus$ and then another one of $\partial \Bts$. 
	
	The following lemma is a consequence of Lemma 5.2 in \cite{VdBK} which applies using Lemma \ref{Lemme propriétés boîtes typiques.}.

	\begin{lemma}\label{Lemme nombre linéaire de bonne boîtes rencontrées.}
		For any $N$ sufficiently large, we can take $D_1 > 0$ and $\alpha > 0$ such that for all $n \ge 1$,
		\begin{equation}
			\begin{split}
				\P (\exists \text{$z \in \Gamma_n$ such that $(0,z) \in \mathfrak{C}$ and $\exists$ a path from $0$ to $z$ that crosses} \\ \text{at most $\lfloor \alpha n \rfloor$ typical boxes}) \le \mathrm{e}^{-D_1 n}.\label{eq: Lemme nombre linéaire de bonnes boîtes rencontrées.}
			\end{split}			
		\end{equation}
	\end{lemma}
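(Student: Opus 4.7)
This is a standard coarse-graining argument, and the two items of Lemma \ref{Lemme propriétés boîtes typiques.} are precisely what one needs: the typical-box event is local, and its probability can be made arbitrarily close to $1$ by taking $N$ large. I would declare a coarse site $s$ \emph{bad} when $\Bts$ is not typical, and show that the bad sites form a sufficiently sparse random field for any path from $0$ to $\Gamma_n$ to cross at least a linear number of good (i.e.\ typical) coarse boxes, outside an event of probability $e^{-D_1 n}$.

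First, I would fix an integer $\Delta$ of order $r_3 N$ so that, for distinct $s,s'$ in the sublattice $\Delta \Z^d$, the boxes $\Bts$ and $B_{3,s',N}$ are vertex-disjoint. Item $1$ of Lemma \ref{Lemme propriétés boîtes typiques.} then makes the events $\{\Bts \text{ typical}\}$ for $s \in \Delta \Z^d$ independent, and item $2$ lets me make their common probability as close to $1$ as desired. Partitioning $\Z^d$ into finitely many translates of $\Delta \Z^d$ reduces the general case to this one, and the field $(\1_{\{\Bts\text{ not typical}\}})_{s}$ is dominated by an i.i.d.\ Bernoulli field with a parameter $p_N \to 0$ as $N \to \infty$. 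This is the only non-combinatorial step and is classical, either via Liggett--Schonmann--Stacey or via a direct sublattice argument.

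Second, given any self-avoiding path $\pi$ from $0$ to $z \in \Gamma_n$, the sequence of coarse boxes that $\pi$ crosses is a $*$-connected sequence $s_1,\dots,s_q$ in the coarse lattice with $q \ge c_0 n / N$, where $c_0>0$ is a purely geometric constant (each coarse box has diameter of order $N$ and $\|z\|_1 = n$). The number of typical boxes crossed by $\pi$ is bounded below by the number of good $s_i$. For a fixed deterministic $*$-path $(s_1,\dots,s_q)$, a Peierls-type bound gives that the probability of having at least $(1-\alpha)q$ bad sites is at most $\binom{q}{\lceil(1-\alpha)q\rceil}\, p_N^{(1-\alpha)q}$; union-bounding over the at most $(3^d)^q$ such $*$-paths of length $q$ starting at $0$ yields an estimate $(C(d)\,p_N^{1-\alpha})^q$. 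Choosing $N$ so large that $C(d)\,p_N^{1-\alpha} < 1/2$ and summing over $q \ge c_0 n/N$ produces the required exponential bound $e^{-D_1 n}$, after rescaling $\alpha$ by $c_0/N$ so that the linear lower bound on good boxes translates into $\lfloor \alpha n \rfloor$.

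The main delicate point is the stochastic-domination step in the second paragraph (passing from a finite-range dependent field to an i.i.d.\ one); everything else is elementary counting and Peierls, and this assemblage is exactly the content of Lemma 5.2 in \cite{VdBK} that the authors invoke.
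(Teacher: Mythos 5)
Your proposal takes essentially the same approach as the paper: the paper's entire proof is the one-sentence citation ``The following lemma is a consequence of Lemma 5.2 in \cite{VdBK} which applies using Lemma \ref{Lemme propriétés boîtes typiques.}'', and your coarse-graining / stochastic-domination / Peierls sketch is, as you acknowledge yourself at the end, a reconstruction of exactly what that cited lemma establishes (using the two properties of typical boxes as the hypotheses). You have simply unpacked the black box rather than citing it, and nothing you write contradicts the paper's argument.
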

	Using Lemma \ref{Lemme nombre linéaire de bonne boîtes rencontrées.} above, we fix 
		\begin{equation}
			\text{$N \ge 1$ large enough and $D_1 > 0$, $\alpha > 0$ such that \eqref{eq: Lemme nombre linéaire de bonnes boîtes rencontrées.} holds.} \label{eq: On fixe N.}
		\end{equation}
	For the remaining of the proof, since $N$ is fixed, we write $\nu_1$ instead of $\nu_1(N)$ in the case \ref{c: Case III.}.
	We fix $\delta'>0$ such that
		\begin{equation}
			\delta' < \min \left( \frac{\delta}{2}, \frac{1}{N} \right). \label{eq: On fixe delta prime.}
	\end{equation}
	Note that, in particular, since $N \ge 1$, we have $\delta' < 1$.
	
	\subsubsection{Boundaries of the patterns in the case \ref{c: Case III.}}\label{Sous-section bordure des motifs.}

	In the case \ref{c: Case III.}, we fix $\nu_2>\nu_1$ such that 
	\begin{equation}
		\cA^\Lambda \cap \{\forall e \in \partial \Lambda, \, T(e) \in (\nu_1,\nu_2) \} \text{ has a positive probability. }\label{eq: On fixe nu2.}
	\end{equation} 
	It is possible since by the condition \ref{OP6} in Section \ref{Sous-section hypothèses sur les motifs.}, for all $M>$, $\cA^\Lambda \cap \{\forall e \in \partial \Lambda, \, T(e) > M \}$ has a positive probability.
	\begin{enumerate}[label=(AF-4')]
		\item For the remaining of the proof, we now replace $\cA^\Lambda$ by 
		\[\cA^\Lambda \cap \{\forall e \in \partial \Lambda, \, T(e) \in (\nu_1,\nu_2) \}.\]
	\end{enumerate}
	As announced in Remark \ref{rem: Remarque sur les hypothèses concernant le motif.}, from now on, the event $\cA^\Lambda$ of the pattern $\mathfrak{P}$ has been modified. The assumption \ref{OP6} is not satisfied by this new event but it satisfies the assumption (AF-4') above. 
	
	\begin{remk}\label{Remarque pas de motif dans une boîte typique.}
		In the case \ref{c: Case III.}, since there is no edge whose passage time is greater than $\nu_1$ in a typical box, there can be no pattern in a typical box.
	\end{remk}
		
	\subsubsection{$k$-penalized paths, shortcuts and successful boxes}\label{Sous-section $k$-penalized paths.}
	
	Recall that $r_3$ is fixed at \eqref{On fixe r1 r2 et r3.} and that $N$ is fixed at \eqref{eq: On fixe N.}. We partition $\Z^d$ with boxes $\Bts$ in $\Kpar=\Kpar(r_3)=|B_{3,0,1}|_v$ ways as follows. For each $z \in B_{3,0,1}$, the partition associated with $z$ is 
	\[\left\{ \Bts, \, s-z \in 2r_3\Z^d \right\}.\]
	For convenience, we index these different partitions from $1$ to $\Kpar$. For $k \in \{1,\dots,\Kpar\}$, the boxes belonging to the $k$-th partition are called $k$-boxes.
	
	Recall that we say that a self-avoiding path $\pi$ takes the pattern if there exists $z \in \Z^d$ satisfying the condition $(\pi;\mathfrak{P})$. For $k \in \{1,\dots,\Kpar\}$, we say that a self-avoiding path $\pi$ takes a pattern entirely contained in a $k$-box if there exists $z \in \Z^d$ such that:
	\begin{itemize}
		\item $z$ satisfies the condition $(\pi,\mathfrak{P})$,
		\item there exists a $k$-box containing $B_\infty(z,\lll)$.
	\end{itemize}
	
	\paragraph*{$k$-penalized paths.}
		%\begin{itemize}
			%\item A penalized path is a self-avoiding path which does not take any pattern.
			%\item 
			For $k \in \{1,\dots,\Kpar\}$, a $k$-penalized path is a self-avoiding path which takes no pattern entirely contained in a $k$-box.
		%\end{itemize}	

	%\begin{remk}
	%	For all $k \in \{1,\dots,\Kpar\}$, a penalized path is a $k$-penalized path.
	%\end{remk}

	\paragraph*{Penalized passage time.}

	For all $x \in \Z^d$, 
	%we define 
	%\[t_P(0,x) = \inf \{T(\pi) \, : \, \pi \text{ is a penalized path from $0$ to $x$}\},\]
	and for $k \in \{1,\dots,\Kpar\}$, we define
	\[t_k(0,x) = \inf \{T(\pi) \, : \, \pi \text{ is a $k$-penalized path from $0$ to $x$}\},\] with the convention $\inf \emptyset = \infty$. 
	
	\paragraph*{$k$-geodesics.}
	For all $x \in \Z^d$,
	%\begin{itemize}
	%	\item a $P$-geodesic from $0$ to $x$ is a penalized path $\gamma$ from $0$ to $x$ such that $T(\gamma)=t_P(0,x)$,
	%	\item 
	for all $k \in \{1,\dots,\Kpar\}$, a $k$-geodesic from $0$ to $x$ is a $k$-penalized path $\gamma$ from $0$ to $x$ such that $T(\gamma)=t_k(0,x)$. 
	%\end{itemize}
	
	\paragraph*{Shortcuts.}
	
	For all boxes $\Bts$, we say that a path $\pi$ has a shortcut in $\Bts$ if $\pi$ crosses $\Bts$ and if there exist two vertices $u$ and $v$ of $\pi$ and a path $\pi'$ going from $u$ to $v$ such that:
	\begin{itemize}
		\item $\pi'$ is entirely contained in $\Bts$,
		\item $\pi_{u,v}$ and $\pi'$ have only $u$ and $v$ as vertices in common,
		\item $\pi'$ takes a pattern entirely contained in $\Bts$,
		\item $T(\pi') < T(\pi_{u,v} \cap \Bts)$.
	\end{itemize}

	\paragraph*{Successful boxes.}
	Let $\Bts$ be a box and $\pi$ a self-avoiding path. 
	We say that $\Bts$ is successful \textbf{for the path $\pi$} if the following two conditions hold:
	\begin{itemize}
		\item $\pi$ crosses $\Bts$,
		\item $\Bts$ is a typical box or $\pi$ has a shortcut in $\Bts$.
	\end{itemize} 
	
	\paragraph*{$S^k$-sequences.}
	For every $k \in \{1,\dots,\Kpar\}$, for every $k$-geodesic $\gamma$ between two vertices, the $S^k$-sequence of $\gamma$ is the sequence of different $k$-boxes successful for $\gamma$ by order of first visit by $\gamma$. Note that the boxes of this $S^k$-sequence are pairwise disjoint by the definition of $k$-boxes.
	
	\subsection{Proof}\label{Sous-section preuve dans les cas a et b.}
	
	\subsubsection{Reduction}\label{Sous-section réduction dans la preuve.}
	
	We begin the proof with some definitions.
	Recall that $\alpha$ is fixed at \eqref{eq: On fixe N.} and that $\Kpar$ is fixed at the beginning of Section \ref{Sous-section $k$-penalized paths.}.
	For all $n \ge 1$, write
	\[Q_n = \left\lfloor \frac{\alpha n}{\Kpar} \right\rfloor.\]
	Fix%\footnote{Il faudra peut-être voir s'il faut prendre $x$ pas trop proche de $0$.}
	\begin{equation}
	n \ge 1 \text{ and } x \in \Gamma_n \label{eq: On fixe n et x.}
	\end{equation}
	From now on, when we talk about a path, a geodesic or a $k$-geodesic without specifying its extremities, we mean that it is from $0$ to $x$. 
	For all $k \in \{1,\dots,\Kpar\}$, we define
	\begin{equation*}
		\begin{split}
			\good = \{\text{$(0,x) \in \mathfrak{C}$ and there exists a $k$-geodesic whose passage time is finite having} \\ \text{at least $Q_n$ boxes in its $S^k$-sequence}\}.
		\end{split}
	\end{equation*}
	
	\paragraph*{Selected $k$-geodesic and $S^k$-variables.}
	For all $k \in \{1,\dots,\Kpar\}$, on the event $\good$, 
	\begin{itemize}
		\item we define the selected $k$-geodesic as the first $k$-geodesic in the lexicographical order\footnote{The lexicographical order is based on the directions of the consecutive edges of the geodesics.} among those having at least $Q_n$ boxes in their $S^k$-sequences,
		\item for all $j \in \{1,\dots,Q_n\}$, we define the random variable $\cS^k_j$ as the vertex $s$ such that $\Bts$ is the $j$-th box of the $S^k$-sequence of the selected $k$-geodesic. 
	\end{itemize}  
	
	\paragraph*{Events $\cM^k$.}
	For all $k \in \{1,\dots,\Kpar\}$ and all $j \in \{1,\dots,Q_n\}$, we define
	\[\cM^k(j) = \good \cap \{\text{the selected $k$-geodesic does not have a shortcut in any of the first $j$ boxes of its $S^k$-sequence}\}.\]
	To make the end of this proof easier to read, we define the events%\footnote{Est-ce qu'on garde les notations A et B ou des notations comme $\cP$ et $\cB$ si elles ne sont pas déjà utilisées pour "pattern" et "box" ou d'autres...?}
	\begin{align*}
		A & = \{\text{$(0,x) \in \mathfrak{C}$ and there exists a geodesic from $0$ to $x$ which does not take the pattern}\}, \\
		B & = \{\text{every path from $0$ to $\Gamma_n$ crosses at least $\lfloor \alpha n \rfloor + 1$ typical boxes}\},
	\end{align*}
	Note that $A$ is the event considered in Proposition \ref{Gros théorème à démontrer, un seul motif.} and $B$ the complementary event to the one considered in Lemma \ref{Lemme nombre linéaire de bonne boîtes rencontrées.}. The proof of Proposition \ref{Gros théorème à démontrer, un seul motif.} is based on the following two lemmas.
	
	\begin{lemma}\label{l: Claim de la réduction.}
		We have $\displaystyle A \cap B \subset \bigcup_{k=1}^{\Kpar} \cM^k(Q_n)$.
	\end{lemma}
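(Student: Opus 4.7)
The plan is to use $N^{\mathfrak{P}}(\gamma)=0$ on the geodesic witnessing $A$ to force the identity $t_k(0,x)=t(0,x)$ for every $k$, so that the selected $k^*$-geodesic in a suitable partition is itself a genuine geodesic, which forbids any shortcut.

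I would first fix a geodesic $\gamma$ from $0$ to $x$ with $N^{\mathfrak{P}}(\gamma)=0$, as provided by $A$. Since $\gamma$ takes no pattern at all, it in particular takes no pattern entirely contained in any $k$-box, so $\gamma$ is a $k$-penalized path for every $k\in\{1,\dots,\Kpar\}$. Combined with $t(0,x)\le t_k(0,x)\le T(\gamma)=t(0,x)$, this yields $t_k(0,x)=t(0,x)$ and shows that $\gamma$ itself is a $k$-geodesic for every $k$.

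Next I would exploit $B$: the path $\gamma$ crosses at least $\lfloor\alpha n\rfloor+1$ typical boxes. Since every box $\Bts$ belongs to exactly one of the $\Kpar$ partitions (indexed by the residue class of $s$ modulo $2r_3$), by pigeonhole there is $k^*\in\{1,\dots,\Kpar\}$ for which $\gamma$ crosses at least $\lceil(\lfloor\alpha n\rfloor+1)/\Kpar\rceil\ge Q_n$ typical $k^*$-boxes. Each such typical $k^*$-box is a successful $k^*$-box for the $k^*$-geodesic $\gamma$, and $T(\gamma)=t(0,x)<\infty$ since $(0,x)\in\mathfrak{C}$, so $\gamma$ witnesses $\cG^{k^*}$. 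The selected $k^*$-geodesic $\gamma^*$ is then well defined, has at least $Q_n$ boxes in its $S^{k^*}$-sequence, and satisfies $T(\gamma^*)=t_{k^*}(0,x)=t(0,x)$; in particular $\gamma^*$ is itself a (standard) geodesic from $0$ to $x$.

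I would then conclude by showing $\gamma^*$ admits no shortcut in any $k^*$-box. Indeed, if a shortcut of $\gamma^*$ existed in some $k^*$-box $\Bts$ through vertices $u,v$ on $\gamma^*$ and a path $\pi'$ from $u$ to $v$ with $T(\pi')<T(\gamma^*_{u,v}\cap \Bts)\le T(\gamma^*_{u,v})$, then replacing $\gamma^*_{u,v}$ by $\pi'$ in $\gamma^*$ would yield a walk from $0$ to $x$ of total passage time strictly less than $T(\gamma^*)=t(0,x)$, contradicting the definition of $t(0,x)$ as an infimum over paths. Hence none of the first $Q_n$ boxes of the $S^{k^*}$-sequence of $\gamma^*$ is shortcut-equipped, so $\cM^{k^*}(Q_n)$ holds, which gives the claimed inclusion. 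The only nontrivial step is the first one, converting "$\gamma$ takes no pattern" into $t_k(0,x)=t(0,x)$; the rest is routine pigeonhole and a concatenation estimate, and presents no real obstacle.
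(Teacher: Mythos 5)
Your proof is correct and follows essentially the same route as the paper: promote $\gamma$ to a $k$-penalized path and deduce $t_k(0,x)=t(0,x)$, use pigeonhole on the typical boxes to pick $k^*$ and witness $\cG^{k^*}$ with $\gamma$, and then rule out shortcuts for the selected $k^*$-geodesic because it is in fact a true geodesic. You spell out the no-shortcut contradiction a bit more explicitly than the paper does, but the structure and the key observations are the same.
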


	\begin{lemma}\label{Lemme majoration de la probabilité de l'événement par lambda puissance Q.}
		There exists $\lambda \in (0,1)$ which does not depend on $x$ and $n$ such that for all $k \in \{1,\dots,\Kpar\}$, \[\P \left(\cM^k(Q_n) \right) \le \lambda^{Q_n}. \]
	\end{lemma}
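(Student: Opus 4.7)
The plan is to induct on $j$ and show there is a constant $\eta>0$, independent of $n$, $x$, $k$ and $j$, such that
\begin{equation*}
\P(\cM^k(j)) \le \frac{1}{1+\eta}\,\P(\cM^k(j-1)) \quad \text{for every } j \in \{1,\dots,Q_n\}.
\end{equation*}
Once this is in place, iterating gives $\P(\cM^k(Q_n)) \le (1+\eta)^{-Q_n} \le \lambda^{Q_n}$ with $\lambda = (1+\eta)^{-1} \in (0,1)$, which is the claim. Since the inclusion $\cM^k(j)\subset \cM^k(j-1)$ is immediate (requiring the first $j$ boxes to carry no shortcut is stronger than requiring it of the first $j-1$), the inductive step is equivalent to establishing
\begin{equation*}
\P\bigl(\cM^k(j-1)\setminus \cM^k(j)\bigr) \ge \eta\, \P(\cM^k(j)).
\end{equation*}

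The main idea, as sketched in Section \ref{Sous-section sketch of the proof.}, is a resampling argument applied to the edges of the $j$-th box in the $S^k$-sequence of the selected $k$-geodesic $\gamma$. On $\cM^k(j)$ this box, call it $B_s$, is typical for $\gamma$ but carries no shortcut. I would condition on the passage times outside $B_s$, read off $\gamma$ and the index $s$ (both of which are measurable with respect to these external times together with the edges inside the already-successful boxes preceding $B_s$, after a careful bookkeeping), and then resample the edges inside $B_s$ so as to plant a pattern in a location forced by the typicality of $B_s$: a forbidden zone obtained from an infinite-edge ball in case \ref{c: Case I.} and from the event $\cT$ of Lemma \ref{Lemme pour les clusters dans les typical boxes dans le cas finin non borné.} in case \ref{c: Case III.}. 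The precise set of edges to resample and the event constraining their new values are exactly the objects to be introduced in Section \ref{Sous-section modification}, and the whole resampling step is packaged in Lemma \ref{Lemme pour les inégalités avec les indicatrices dans la modification.}, which I would invoke here as a black box.

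The key point is that the event describing a "successful resampling" has a positive probability $\eta$ bounded below uniformly in $s$ and in the surrounding environment, because it only constrains the finitely many edges inside $B_s$ and the constraint itself is invariant under translation of $s$. On a successful resampling, the four stability properties stated in the sketch hold: $\gamma$ still has finite passage time, remains $k$-penalized, remains optimal among $k$-penalized paths (because any competitor would have to avoid the large forbidden zone that now surrounds the planted pattern, which forces it to coincide with $\gamma$ outside $B_s$), and is still selected by the deterministic lexicographic rule. Combined with the fact that the $S^k$-sequence is preserved (its other boxes have untouched edges, and $B_s$ switches from "typical, no shortcut" to "typical or shortcut-equipped"), this yields the event $\cM^k(j-1)\setminus \cM^k(j)$ in the new environment. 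A standard inequality between the product measure on the resampled coordinates and the original measure then gives $\P(\cM^k(j-1)\setminus \cM^k(j)) \ge \eta\, \P(\cM^k(j))$.

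The hardest part is not the combinatorics of the modification itself but verifying stability, that is, checking that $\gamma$ remains the selected $k$-geodesic after the resampling. This requires showing both that $\gamma$ stays a $k$-geodesic (no new $k$-penalized competitor can beat it, which uses that edges in $B_s$ outside the planted shortcut have prohibitive passage times in case \ref{c: Case III.} or are infinite in case \ref{c: Case I.}) and that no new $k$-geodesic with a larger $S^k$-sequence appears, so that $\gamma$ still wins the lexicographic tiebreak. These verifications are exactly what Lemma \ref{Lemme pour les inégalités avec les indicatrices dans la modification.} will establish, and the content of the present lemma is to convert that per-box probabilistic gain into the geometric bound $\lambda^{Q_n}$ by summing over $j=1,\dots,Q_n$.
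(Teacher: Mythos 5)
Your overall strategy matches the paper's: reduce to the per-box gain $\P(\cM^k(j-1)\setminus\cM^k(j))\ge\eta\,\P(\cM^k(j))$, obtain that gain from the resampling packaged in Lemma~\ref{Lemme pour les inégalités avec les indicatrices dans la modification.}, and iterate to get the geometric bound. However, there is a concrete error in the conditioning step that would not survive being written out. You assert that $\gamma$ and the index $s$ are ``measurable with respect to these external times together with the edges inside the already-successful boxes preceding $B_s$.'' This is false: the selected $k$-geodesic $\gamma$ is picked by a lexicographic rule among those $k$-geodesics with at least $Q_n$ boxes in their $S^k$-sequences, a choice that depends on the passage times of every edge, including the edges of $B_s$; and the set of edges you intend to resample (the various $\Em(T), \Emid(T),\dots$ of Section~\ref{Sous-section modification}) is likewise a function of $T(e)$ for $e$ inside $B_s$, since it depends on which edges of $\gamma$ and $\pi'$ carry small or large passage times there. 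Consequently, conditioning on the external environment does not determine which edges get resampled, and one cannot directly argue that the resampled configuration has the same law as the original on those coordinates.

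The paper avoids this by conditioning on all of $T$ (item (ii) of Lemma~\ref{Lemme pour les inégalités avec les indicatrices dans la modification.} is a conditional bound given $T$, with $T'$ an independent copy), and then dealing with the $T$-dependence of the modification set by summing the deterministic indicator inequality~\eqref{eq: Inégalités des indicatrices.} over all possible pairs $(s,\cE^*)$: on each slice $\{S^k_\l(T)=s,\ E^*_\text{modif}(T)=\cE^*\}$ the environment $T^*$ coincides with the fixed-law environment $T^*_{s,\cE^*}$, so equidistribution applies. Summing over the finitely many subsets $\cE^*$ of edges of $\Btz$ introduces the constant $K'$, giving $\lambda = \bigl(1+\eta/K'\bigr)^{-1}$ rather than your $\bigl(1+\eta\bigr)^{-1}$. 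Since $K'$ depends only on $N$, this does not affect the conclusion $\P(\cM^k(Q_n))\le\lambda^{Q_n}$, but the measurability claim as stated needs to be replaced by the sum-over-$(s,\cE^*)$ device for the argument to close.
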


	\begin{proof}[Proof of Proposition \ref{Gros théorème à démontrer, un seul motif.} using Lemma \ref{l: Claim de la réduction.} and \ref{Lemme majoration de la probabilité de l'événement par lambda puissance Q.}]
		Recall that $N$ is fixed at \eqref{eq: On fixe N.} and that $n$ and $x$ are fixed at \eqref{eq: On fixe n et x.} but that $D_1$ and $\lambda$ do not depend on $x$ and $n$. We have
		\begin{align*}
			\P(A) & \le \P(A \cap B) + \P(B^c) \\
			& \le \sum_{k=1}^\Kpar \P \left( \cM^k(Q_n) \right) + \P(B^c) \text{ by Lemma \ref{l: Claim de la réduction.},} \\
			& \le \Kpar \lambda^{Q_n} + \P(B^c) \text{ by Lemma \ref{Lemme majoration de la probabilité de l'événement par lambda puissance Q.},} \\
			& \le \Kpar \lambda^{Q_n} + \mathrm{e}^{-D_1n} \text{ by Lemma \ref{Lemme nombre linéaire de bonne boîtes rencontrées.}.}
		\end{align*}
		As $D_1>0$ and $\lambda \in (0,1)$, and as this inequality holds for any $n \ge 1$ and any $x \in \Gamma_n$, we get the existence of two constants $C>0$ and $D>0$ such that for all $n$, for all $x \in \Gamma_n$, 
		\[\P(A) \le D \exp(-Cn).\]
	\end{proof}
	
	\begin{proof}[Proof of Lemma \ref{l: Claim de la réduction.}]
		Assume that $A$ occurs. Then there exists a self-avoiding path $\gamma$ from $0$ to $x$ such that:
		\begin{itemize}
			\item $\gamma$ does not take the pattern,
			\item $T(\gamma)=t(0,x)<\infty$.
		\end{itemize}
		For all $k \in \{1,\dots,\Kpar\}$, we get that:
		\begin{itemize}
			\item $\gamma$ is a $k$-penalized path,
			\item $T(\gamma)=t(0,x)=t_k(0,x)<\infty$.
		\end{itemize}
		Thus, for all $k \in \{1,\dots,\Kpar\}$, $\gamma$ is a $k$-geodesic from $0$ to $x$ and no $k$-geodesic has a shortcut in any box. Assume that $B$ also occurs. Then $\gamma$ crosses at least $\left\lfloor \alpha n \right\rfloor + 1$ typical boxes. Hence, there exists $k \in \{1,\dots,\Kpar\}$ such that $\gamma$ crosses at least $\displaystyle Q_n = \left\lfloor \frac{\alpha n}{\Kpar} \right\rfloor$ typical boxes. Since every typical $k$-box crossed by $\gamma$ is a successful box for $\gamma$, $\gamma$ is a $k$-geodesic having at least $Q_n$ boxes in its $S^k$-sequence. Hence the event $\cG^k$ occurs and the selected $k$-geodesic (which is not necessarily $\gamma$) does not have a shortcut in any of the first $Q_n$ boxes of its $S^k$-sequence since it does not have a shortcut in any box. So the event $\cM^k(Q_n)$ occurs.
	\end{proof}
	
	Now, for the remaining of the proof, the aim is to prove Lemma \ref{Lemme majoration de la probabilité de l'événement par lambda puissance Q.}.

	\paragraph*{Modification argument.}
	
	We introduce an independent copy $T'$ of the environment $T$, the two being defined on the same probability space. It is thus convenient to refer to the considered environment when dealing with the objects defined above. To this aim, we shall use the notation $\{T \in \cM^k(j)\}$ to denote that the event $\cM^k(j)$ holds with respect to the environment $T$. In other words, $\cM^k(j)$ is now seen as a subset of $[0,\infty]^\cE$ where $\cE$ is the set of all the edges. Similarly, we denote by $\cS^k_j(T')$ the random variables defined above but in the environment $T'$. 
	
	Fix $k \in \{1,\dots,K\}$ and $\l \in \{1,\dots,Q_n\}$. On $\{T \in \cM^k(\l)\}$, the event $\cG^k$ occurs and $B_{3,S^k_\l(T),N}$ is the $\l$-th box of the $S^k$-sequence of the selected $k$-geodesic. From this new environment, we associate a set of edges $E^*_\text{modif}(T)$ which is contained in $B_{3,S^k_\l(T),N}$. It corresponds to the edges for which we want to modify the time.	We get a new environment $\Tu$ defined for all edges $e$ by:
	\[\Tu(e) = \left\{
	\begin{array}{ll}
		T(e) & \mbox{if } e \notin E^*_\text{modif}(T), \\
		T'(e) & \mbox{else.}
	\end{array}
	\right.\]
	For $y$ and $z$ in $\Z^d$, we denote by $t^*(y,z)$ the geodesic time between $y$ and $z$ in the environment $\Tu$. Note that $T$ and $\Tu$ do not have the same distribution as the set $\Emod(T)$ depends on $T$.
	
	The proof of Lemma \ref{Lemme majoration de la probabilité de l'événement par lambda puissance Q.} relies on the following lemma whose proof is given in the next subsection. Recall that, in the case \ref{c: Case III.}, $\nu_2$ is fixed at \eqref{eq: On fixe nu2.} and that, if $\r=0$ in the cases= \ref{c: Case I.}, $\nu_0$ is fixed at \eqref{On fixe nu0.}.
	
	\begin{lemma}\label{Lemme pour les inégalités avec les indicatrices dans la modification.}
		There exists $\eta=\eta(N) > 0$ such that for all $\l$ in $\{1,\dots,Q_n\}$, for all $k \in \{1,\dots,\Kpar\}$, there exist measurable functions $\Em$, $\Emid$, $\Ep$, $\Einf$ all from $[0,\infty]^\cE$ to $\cP(\cE)$ and a measurable function $\cC \, : \, [0,\infty]^\cE \mapsto \Z^d$ such that:
		\begin{enumerate}[label=(\roman*)]
			\item on the event $\{T \in \cM^k(\l)\}$, $\Em(T)$, $\Emid(T)$, $\Ep(T)$, $\Einf(T)$ and $B_\infty(\cC(T),\lll)$ are pairwise disjoint and are contained in $B_{3,S^k_\l(T),N}$,
			\item on the event $\{T \in \cM^k(\l) \}$, we have $\P \left( T' \in \cB^*(T) | T \right) \ge \eta$, where $\{T' \in \cB^*(T)\}$ is a shorthand for the event on which:
			\begin{itemize}
				\item $\forall e \in \Em(T), \, T'(e) \le \r+\delta'$,
				\item $\forall e \in \Emid(T), \, T'(e) \in ( \r + \delta , \nu_0 )$,
				\item $\forall e \in \Ep(T), \, T'(e) > \nu_2$,
				\item $\forall e \in \Einf(T), \, T'(e) = \infty$,
				\item $\theta_{ \cC(T)} T' \in \cA^\Lambda$.
			\end{itemize}
			\item $\{T \in \cM^k(\l) \} \cap \{T' \in \cB^*(T)\} \subset \{\Tu \in \cM^k(\l-1) \setminus \cM^k(\l)\}$ and $S^k_\l(\Tu)=S^k_\l(T)$.
		\end{enumerate}
	\end{lemma}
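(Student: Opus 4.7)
The plan is to construct the center $\cC(T)$ and the four edge sets explicitly from the geometry of the selected $k$-geodesic $\gamma$ inside the $\l$-th successful box $B_s := B_{3, S^k_\l(T), N}$, then verify (i)--(iii) in order of difficulty. The main obstacle will be (iii), and within (iii) the fact that $\gamma$ remains the \emph{selected} $k$-geodesic in the new environment.

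First I would use the typicality of $B_s$. On $\{T \in \cM^k(\l)\}$, the box $B_s$ is successful for $\gamma$ but not shortcut-equipped, so it must be a typical box that $\gamma$ crosses. Picking deterministic entry/exit vertices $u \in \partial B_{3,S^k_\l(T),N}$ and $v \in \partial B_{2,S^k_\l(T),N}$ of $\gamma$ in $B_s$, property (iii) of typical boxes supplies a vertex $c$ on the selected straight segment between $u$ and $v$ whose neighbourhood $B_\infty(c, r_P)$ is a forbidden zone: in case \ref{c: Case I.} all its edges carry infinite passage time, while in case \ref{c: Case III.} we have $\theta_c T \in \cT$ so that Lemma \ref{Lemme pour les clusters dans les typical boxes dans le cas finin non borné.} applies. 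I set $\cC(T) := c$; by \eqref{On fixe rP.} the pattern site $\theta_c \Lambda = B_\infty(c, \lll)$ lies strictly inside this forbidden zone.

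Next I would fix a deterministic detour $\pi^\star$ inside $B_\infty(c, r_P)$ that leaves $\gamma$ at some vertex $u^\star$, reaches $c + u^\Lambda = c - \lll \epsilon_1$, then rejoins $\gamma$ at a vertex $v^\star$ from $c + v^\Lambda = c + \lll \epsilon_1$. The set $\Em(T)$ is the edges of $\pi^\star$: these will be made fast to realize a shortcut through $\theta_c \Lambda$. In case \ref{c: Case I.}, $\Einf(T)$ is declared to be the remaining edges of $B_s$ outside $\gamma \cup \theta_c \Lambda \cup \pi^\star$, with $\Ep(T) = \emptyset$, and $\Emid(T)$ used only when $\r = 0$ to guarantee that the detour genuinely shortens $\gamma$ despite the degeneracy. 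In case \ref{c: Case III.}, $\Ep(T)$ plays the analogous role and $\Einf(T) = \emptyset$. By construction, $\Em(T), \Emid(T), \Ep(T), \Einf(T)$ and $B_\infty(\cC(T), \lll)$ are pairwise disjoint and contained in $B_s$, giving (i). For (ii), the event $\cB^*(T)$ imposes independent constraints on finitely many edges, each of positive probability uniformly in $T$: $\L([0, \r + \delta']) > 0$ by definition of $\r$, $\L((\r + \delta, \nu_0)) > 0$ by \eqref{On fixe nu0.}, $\L((\nu_2, \infty)) > 0$ in case \ref{c: Case III.} by unbounded support, $\L(\{\infty\}) > 0$ in case \ref{c: Case I.}, and $\P(\theta_{\cC(T)} T' \in \cA^\Lambda) > 0$ by validity of $\mathfrak P$ together with \eqref{eq: On fixe nu2.} in case \ref{c: Case III.}. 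Since $N$ is fixed and the number of constrained edges is at most $|B_s|_e$, this yields $\eta = \eta(N) > 0$ independent of $T$.

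The hard part is (iii), which I would split into four claims: (a) $\gamma$ remains $k$-penalized in $\Tu$; (b) $\gamma$ has a shortcut in $B_s$ in $\Tu$; (c) the $S^k$-sequence of $\gamma$ in $T$ and $\Tu$ agree through position $\l$; (d) $\gamma$ is still the selected $k$-geodesic in $\Tu$. Claim (b) is essentially built in: the detour composed with the pattern has passage time at most $|\pi^\star|(\r + \delta') + T^\Lambda$, which by the choice \eqref{On fixe rP.} of $r_P$ and the typical-box bound \eqref{Définition chemins pas anormalement courts cas infini.} on the corresponding subpath of $\gamma$ is strictly smaller. Claim (c) follows from (a) since edges outside $B_s$ are untouched and $B_s$ has just become shortcut-equipped. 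Claim (a) reduces to showing that the only pattern occurrence inside $B_s$ in $\Tu$ is $\theta_c \Lambda$ itself: in case \ref{c: Case III.} this follows because boundary edges in the window $(\nu_1, \nu_2)$ exist only on $\partial \theta_c \Lambda$ after the modification (unchanged edges belong to a typical box, where all passage times sum to less than $\nu_1$), so no other translate of $\Lambda$ can satisfy (AF-4'); in case \ref{c: Case I.} a similar argument isolates $\theta_c \Lambda$ as the unique large connected component of finite-time edges in $B_s$, using the boundary condition \ref{OP3}. Once uniqueness is known, $\gamma$ avoids $\theta_c \Lambda$ because it lies in the former forbidden zone. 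Claim (d) is the most delicate: any rival $k$-penalized path in $\Tu$ entering $B_s$ must avoid both $\theta_c \Lambda$ and the prohibitive edges of $\Einf(T) \cup \Ep(T)$, so it is trapped in a narrow corridor around $\gamma$; a quantitative comparison of passage times, using the typicality of $B_s$ in $T$ together with the explicit bounds on the resampled edges, should rule out any improvement over $\gamma$, after which the lexicographic rule still picks $\gamma$. Assembling (a) and (d) rigorously is exactly what the consequences-of-modification section (Section \ref{Section conséquences de la modification.}) will have to carry out.
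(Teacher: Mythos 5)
Your construction is the same as the paper's: $\cC(T)$ is taken from property (iii) of the typical box $\Bts$, the shortcut is a detour through the pattern placed in the forbidden zone, $\Em$ holds the edges to be made fast, $\Einf$ (case \ref{c: Case I.}) or $\Ep$ (case \ref{c: Case III.}) holds everything else, and item (iii) is decomposed exactly as in the paper's properties (C1)--(C4)/(P1)--(P4). Items (i) and (ii) are fine modulo the bookkeeping of removing the edges of $B_\infty(\cC(T),\lll)$ from $\Em(T)$, which you implicitly do by asserting disjointness.

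There is, however, one genuine gap in the degenerate case $\r=0$ with $|\gamma_{u_\pi,v_\pi}|_e<N$ (the paper's Case A). You correctly see that $\Emid(T)$ must raise the passage times of the edges of $\gamma_{u_\pi,v_\pi}$ to values in $(\r+\delta,\nu_0)$, since otherwise that subpath can be arbitrarily fast and the detour through the pattern (which costs up to $T^\Lambda$) would not be a shortcut; note also that the typical-box lower bound \eqref{Définition chemins pas anormalement courts cas infini.} you invoke for claim (b) only applies when $\|u_\pi-v_\pi\|_1\ge N$, so this degenerate case cannot be absorbed into the generic one. But raising those times makes $\Tu(\gamma)>T(\gamma)$ possible (by up to $N\nu_0$), and then your claim (d) fails for a rival $k$-penalized path that \emph{avoids} $\Bts$ entirely: its passage time is unchanged, so it can now undercut $\gamma$, and $\gamma$ need no longer be a $k$-geodesic in $\Tu$ — your ``narrow corridor'' argument only controls rivals that enter the box. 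The paper's fix is to enlarge $\Em(T)$ in Case A to also contain the edges of $\gamma\setminus\gamma_{u_\pi,v_\pi}$ inside $\Bts$ with $T(e)\ge\r+\delta'$: by property (i) of a typical box, $\gamma_{u,u_\pi}$ and $\gamma_{v_\pi,v}$ each contain at least $\rho(r_3-r_2)N$ edges with passage time above $\delta$, so resampling them below $\r+\delta'$ saves at least $2\rho(r_3-r_2)N(\delta-\delta')$, which by the choice of $r_3$ in \eqref{On fixe r1 r2 et r3.} dominates the $N\nu_0$ increase and restores $\Tu(\gamma_{u,v})\le T(\gamma_{u,v})$, hence property \ref{Cofi} and claim (d). Without this compensating decrease on $\gamma$ outside $\gamma_{u_\pi,v_\pi}$, the stability half of item (iii) does not go through.
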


	\begin{remk}
		%\,
		%\begin{itemize}
			%\item 
			Several of the functions of the previous lemma can be equal to the empty set depending on the cases. Thus this does not prevent us from having $\P \left( T' \in \cB^*(T) | T \right) \ge \eta$. In particular, for every environment $T$:
				\begin{itemize}
					\item in the case \ref{c: Case I.}, $\Ep(T)=\emptyset$,
					\item in the case \ref{c: Case III.}, $\Einf(T)=\emptyset$,
					\item in the two cases, if $\r>0$, $\Emid(T)=\emptyset$.
				\end{itemize}
			%\item In the case \ref{c: Case II.}, we could consider the union of $\Em(T)$ and $\Efin(T)$ instead of the two sets of edges separately since there is only one finite atom and one infinite atom. However, we keep these two sets to give a proof similar to that of the case \ref{c: Case I.}.
			%\end{itemize}
	\end{remk}

	\begin{proof}[Proof of Lemma \ref{Lemme majoration de la probabilité de l'événement par lambda puissance Q.} using Lemma \ref{Lemme pour les inégalités avec les indicatrices dans la modification.}]
		Let $\l \in \{1,\dots,Q_n\}$ and $k \in \{1,\dots,\Kpar\}$. For every $s \in \Z^d
		$ and $\cE^*$ subset of edges of $\Bts$, let us consider the environment $\Tu_{s,\cE^*}$ defined for all edges $e$ by:
		\[\Tu_{s,\cE^*}(e) = \left\{
		\begin{array}{ll}
			T'(e) & \mbox{if } e \in \cE^*, \\
			T(e) & \mbox{else.}
		\end{array}
		\right.\]
		We define $E^*_\text{modif}(T)= \Em(T) \cup \Emid(T) \cup \Ep(T) \cup \Einf(T) \cup B_\infty(\cC(T),\lll)$. Thus, for every $s$ and $\cE^*$, $\Tu_{s,\cE^*}$ and $T$ have the same distribution and on the event $\{T \in \cM^k(\l)\} \cap \{S^k_\l(T)=s\} \cap \{E^*_\text{modif}(T) = \cE^*\}$, $\Tu = \Tu_{s,\cE^*}$.
		So, using this environment and writing with indicator functions the result of Lemma \ref{Lemme pour les inégalités avec les indicatrices dans la modification.}, we get: 
		\begin{equation}
			\1_{\{T \in \cM^k(\l) \}} \1_{\{S^k_\l(T)=s\}} \1_{\{E^*_\text{modif}(T) = \cE^*\}} \1_{\{T' \in \cB^*(T)\}} \le \1_{\{\Tu_{s,\cE^*} \in \cM^k(\l-1) \setminus \cM^k(\l)\}} \1_{\{S^k_\l(\Tu_{s,\cE^*})=s\}}, \label{eq: Inégalités des indicatrices.}
		\end{equation}
		We compute the expectation on both sides. For the left side, we have 
		\begin{align*}
			& \E \left[ \1_{\{T \in \cM^k(\l) \}} \1_{\{S^k_\l(T)=s\}} \1_{\{E^*_\text{modif}(T) = \cE^*\}} \1_{\{T' \in \cB^*(T)\}} \right] \\ 
			= & \, \E \left[\1_{\{T \in \cM^k(\l) \}} \1_{\{S^k_\l(T)=s\}} \1_{\{E^*_\text{modif}(T) = \cE^*\}} \E \left[ \left. \1_{\{T' \in \cB^*(T)\}} \right| T \right] \right]. 
		\end{align*}
		Since on the event $\{T \in \cM^k(\l) \} \cap \{S^k_\l(T)=s\}$, we have $\P \left( T' \in \cB^*(T) | T \right) \ge \eta$, the left side is bounded from below by $\eta \P (T \in \cM^k(\l), \, S^k_\l(T)=s, \, E^*_\text{modif}(T) = \cE^*)$. Since $\Tu_{s,\cE^*}$ and $T$ have the same distribution, using \eqref{eq: Inégalités des indicatrices.}, we get:
		\[\eta \P (T \in \cM^k(\l), \, S^k_\l(T)=s, \, E^*_\text{modif}(T) = \cE^*) \le \P \left( T \in \cM^k(\l-1) \setminus \cM^k(\l), \, S^k_\l(T)=s \right).\] 
		Then, by writing $K'$ the number of subsets of edges of $\Btz$ and by summing on all subsets $\cE^*$ of edges of $\Bts$, we get for all $s \in \Z^d$,
		\[\frac{\eta}{K'} \P (T \in \cM^k(\l), \, S^k_\l(T)=s) \le \P \left( T \in \cM^k(\l-1) \setminus \cM^k(\l), \, S^k_\l(T)=s \right).\] 
		Finally, by summing\footnote{Note that here, we must have the event $\{S^k_\l(\Tu_{s,\cE^*})=s\}$ on the right side of the inequality \eqref{eq: Inégalités des indicatrices.} to sum on all $s \in \Z^d$. } on all $s \in \Z^d$, we get
		\[\frac{\eta}{K'}\P(T \in \cM^k(\l)) \le \P(T \in \cM^k(\l-1) \setminus \cM^k(\l)).\]
		Now, since $\cM^k(\l) \subset \cM^k(\l-1)$,
		
		\[ \P(T \in \cM^k(\l-1) \setminus \cM^k(\l))= \P \left( T \in \cM^k(\l-1)\right) - \P \left( T \in \cM^k(\l) \right).\]
		Thus, \[\P(T \in \cM^k(\l)) \le \lambda \P(T \in \cM^k(\l-1) ),\] where $\displaystyle \lambda=\frac{1}{1+\frac{\eta}{K'}} \in (0,1)$ does not depend on $x$ and $n$. 
		Hence, using $\P(T \in \cM^k(0)) =1$, we get by induction \[\P(T \in \cM^k(Q_n)) \le \lambda^Q_n.\]
	\end{proof}
	
	\subsubsection{Proof of Lemma \ref{Lemme pour les inégalités avec les indicatrices dans la modification.}: modification}\label{Sous-section modification}
	
	Let $\l \in \{1,\dots,Q_n\}$, $k \in \{1,\dots,\Kpar\}$ and $s \in \Z^d$ such that $\Bts$ is a $k$-box. Assume that the event $\{T \in \cM^k(\l)\} \cap \{S^k_\l(T)=s\}$ occurs. Note that $(0,x) \in \mathfrak{C}$. We denote by $\gamma$ the selected $k$-geodesic. We know that:
	\begin{enumerate}[label=(H\arabic*)]
		\item\label{H1} $\gamma$ has at least $Q_n$ boxes in its $S^k$-sequence,
		\item\label{H2} $\gamma$ does not have a shortcut in any of the first $\l$ boxes of its $S^k$-sequence,
		\item\label{H3} $\Bts$ is the $\l$-th box of the $S^k$-sequence of $\gamma$ and is a typical box.
	\end{enumerate}
	
	\paragraph*{Construction of the forbidden zone and definition of $\cC(T)$.}
	
	Let $u_2$ (resp. $u_1$) be the entry point of $\gamma$ in $\Bds$ (resp. $\Bus$).
	Since $\Bts$ is a typical box, we can define:
	\begin{itemize}
		\item $\pi$ as the path $\fpm(u_2,u_1)$ defined in Definition \ref{d: central straight segment et chemin pi flèche.},
		\item $\cC(T)$ the first vertex belonging to the selected straight segment between $u_2$ and $u_1$ and satisfying property (iii) of a typical box. Such a vertex exists by Remark \ref{Remk: Selected straight segment non vide.} and since $\Bts$ is a typical box in the environment $T$. Recall that in the case \ref{c: Case I.}, for every edge $e \in B_\infty(\cC(T),r_P)$, $T(e) = \infty$ and in the case \ref{c: Case III.}, $\theta_{-\cC(T)} T \in \cT$. 
	\end{itemize}
	We use the expression "forbidden zone" to refer to $B_\infty(\cC(T),r_P) \setminus \partial B_\infty(\cC(T),r_P)$. This is the place where we want to place the pattern taken by the shortcut in the modified environment.
	
	\paragraph*{Properties of the forbidden zone.}
	 
	%Note that $\gamma$ does not visit any vertex of the red cluster. Indeed, in the case with infinite passage times, every edge of the red cluster has a passage time equal to $\infty$ and in the case with finite unbounded passage times, it follows from Lemma \ref{Lemme pour les clusters dans les typical boxes dans le cas finin non borné.}. 
	\begin{lemma}\label{l: Properties of the forbidden zone.}
		\begin{enumerate}
			\item The path $\gamma$ does not visit any vertex of the forbidden zone.
			\item The ball $B_\infty(\cC(T),\lll)$ is contained in the forbidden zone.
			\item The forbidden zone is contained in $\Bds$ and for every $c$ in the forbidden zone and every $z \in \partial \Bds$, \[\|z-c\|_1 \ge \frac{(r_2-r_1)N}{2}.\]
		\end{enumerate}
	\end{lemma}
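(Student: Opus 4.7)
\emph{Plan of proof.} The three items are essentially independent. Items~2 and~3 are geometric consequences of the choice of constants; item~1 is the substantive point and splits along the two cases of \eqref{eq: Hypothèses sur la loi.}.

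For item~2, recall that in this paper's convention $\partial B_\infty(\cC(T),r_P)=\{v:\|v-\cC(T)\|_\infty=r_P\}$, so the forbidden zone equals $B_\infty(\cC(T),r_P-1)$. Since $r_P>\lll+2$ by \eqref{On fixe rP.}, the inclusion $B_\infty(\cC(T),\lll)\subset B_\infty(\cC(T),r_P-1)$ is immediate. For item~3, Definition~\ref{d: central straight segment et chemin pi flèche.} gives $\|\cC(T)-z\|_1\ge(r_2-r_1)N/2+dr_P$ for every $z\in(\Bds)^c$; combined with $\|\cC(T)-c'\|_1\le dr_P$ for $c'\in B_\infty(\cC(T),r_P)$, this shows $B_\infty(\cC(T),r_P)\subset\Bds$. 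For $z\in\partial\Bds$ adjacent to some $z'\in(\Bds)^c$, one has $\|\cC(T)-z\|_1\ge(r_2-r_1)N/2+dr_P-1$, and for $c'$ in the forbidden zone $\|\cC(T)-c'\|_1\le d(r_P-1)$, so the triangle inequality gives $\|z-c'\|_1\ge(r_2-r_1)N/2+d-1\ge(r_2-r_1)N/2$.

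For item~1, in case \ref{c: Case I.} the construction of $\cC(T)$ ensures that every edge of $B_\infty(\cC(T),r_P)$ has infinite passage time, and every edge incident to a vertex of the forbidden zone lies in $B_\infty(\cC(T),r_P)$; since $\gamma$ has finite passage time by the definition of $\good$, the claim is immediate. In case \ref{c: Case III.}, I would argue by contradiction: if $\gamma$ visits a vertex $w$ of the forbidden zone, let $u'$ and $v'$ be the last vertex of $\gamma$ on $\partial B_\infty(\cC(T),r_P)$ before, and the first after, the visit to $w$. Then $\gamma_{u',v'}$ uses at least one edge of $B_\infty(\cC(T),r_P)$ not in its boundary, and Lemma~\ref{Lemme pour les clusters dans les typical boxes dans le cas finin non borné.} (applied via the typical box property (iii)) yields a self-avoiding path $\pi'$ from $u'$ to $v'$ on $\partial B_\infty(\cC(T),r_P)$ with $T(\pi')<T(\gamma_{u',v'})$. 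Replacing $\gamma_{u',v'}$ by $\pi'$ produces a path $\gamma'$ from $0$ to $x$ with $T(\gamma')<T(\gamma)$.

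The main obstacle is then to verify that $\gamma'$ remains a $k$-penalized path, so that $T(\gamma')<T(\gamma)$ contradicts the optimality of $\gamma$. The modification is confined to $\partial B_\infty(\cC(T),r_P)\subset\Bts$, so any pattern taken by $\gamma'$ but not by $\gamma$ must be centered within $\|\cdot\|_\infty$-distance $r_P+\lll$ of $\cC(T)$; combined with the selected straight segment estimate and $N$ large, any such pattern ball lies entirely inside $\Bts$. Since the $k$-boxes are pairwise disjoint and $\Bts$ is itself a $k$-box, a pattern entirely contained in some $k$-box and touching the modified portion must lie entirely in $\Bts$; but Remark~\ref{Remarque pas de motif dans une boîte typique.} forbids any pattern inside a typical box. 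This gives the contradiction and completes item~1.
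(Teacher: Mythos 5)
Your proof follows essentially the same route as the paper's and is correct; in a couple of places you are actually more careful than the paper. In particular, for item~2 and item~3 you correctly note that the forbidden zone is $B_\infty(\cC(T),r_P-1)$ (excluding the outermost layer), so that $\|c-\cC(T)\|_1\le d(r_P-1)$ for $c$ in the forbidden zone, and you correctly handle the distinction between $\partial\Bds$ and $(\Bds)^c$; the paper's proof of item~3 is slightly loose on both of these points. For item~1 in case (INF) you rightly argue via edges of $B_\infty(\cC(T),r_P)$ rather than merely edges of the forbidden zone, which is the correct invocation of typical box property~(iii). For item~1 in case (FU) you adopt the same contradiction argument as the paper (replace the excursion through the forbidden zone by a cheaper boundary path furnished by Lemma~\ref{Lemme pour les clusters dans les typical boxes dans le cas finin non borné.}) and you explicitly raise the genuine issue — that the substituted path must still be $k$-penalized in order to contradict optimality of the $k$-geodesic — which the paper's one-line proof leaves implicit. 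Two small points remain only sketched in your writeup, as in the paper's: (a) the concatenation $\gamma_{0,u'}\cdot\pi'\cdot\gamma_{v',x}$ need not be self-avoiding and must in principle be loop-erased before invoking the definition of $t_k$; and (b) the claim that the loop-erasure cannot create a new pattern in a $k$-box disjoint from $\Bts$ deserves a word, since loop-erasure can remove portions of $\gamma$ outside $\Bts$ (not only inside). Neither of these affects the conclusion, but if you wanted full rigor you would want to say a sentence about each; the paper itself does not.
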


	\begin{proof}
		\begin{enumerate}
			\item In the case \ref{c: Case I.}, every edge of the forbidden zone has an infinite passage time although $\gamma$ has a finite passage time. In the case \ref{c: Case III.}, it follows from Lemma \ref{Lemme pour les clusters dans les typical boxes dans le cas finin non borné.} and the fact that $\gamma$ crosses this box and is a geodesic in the environment $T$.
			\item It comes from the inequality $r_P > \lll + 1$ by \eqref{On fixe rP.}.
			\item By Definition \ref{d: central straight segment et chemin pi flèche.}, $\cC(T)$ belongs to $\Bds$ and the distance between $\cC(T)$ and $(\Bds)^c$ is at least $\displaystyle \|z-c\|_1 \ge \frac{(r_2-r_1)N}{2}$. We get the result using that, for every $c$ in the forbidden zone, $\|c-\cC(T)\|_1 \le d r_P$.
		\end{enumerate}
	\end{proof}

	\paragraph*{Construction of the shortcut $\pi'$.}
	
	\begin{figure}
		\begin{center}
			\begin{tikzpicture}[scale=0.7]
				\clip (-1,10) rectangle (17,22.5);
				\draw (0,0) rectangle (20,20);
				\draw (8.5,8.5) rectangle (11.5,11.5);
				\draw (13,20) node[above] {$\Bds$};
				\draw (8.5,11.5) node[above] {$\Bus$};
				\draw[line width=2pt] (7,20) -- (11,20) -- (11,11.5); 
				\draw[pattern= north west lines, pattern color=gray!70] (10,13) rectangle (12,15);
				\draw[Peach,fill=Peach] (10.75,13.75) rectangle (11.25,14.25);
				\draw[line width=2pt,Green] (11,18) -- (11,14.35) -- (10.65,14.35) -- (10.65,14) -- (10.8,14) -- (10.8,14.15) -- (11,14.15) -- (11,14) -- (11.35,14) -- (11.35,13.65) -- (11,13.65) -- (11,12);
				\draw[line width=1.5pt,Gray] (6,24) .. controls (10,23.5) and (7,21) .. (7,20);
				\draw[line width=1.5pt,Gray] (7,20) .. controls (7,19) and (9,17.5) .. (11,18);
				\draw[line width=1.5pt,Gray] (11,18) .. controls (16,19) and (14,12) .. (11,12);
				\draw[line width=1.5pt,Gray] (11,12) .. controls (10.5,12) and (10.5,11.5) .. (11,11.5);
				\draw[line width=1.5pt,Gray] (11,11.5) .. controls (17,11.5) and (18,8) .. (17,5);
				\draw (7,20) node[above left] {$u_2$};
				\draw (11,11.5) node[below] {$u_1$};
				\draw (7,20) node {$\bullet$};
				\draw (11,11.5) node {$\bullet$};
				\draw (11,18) node[below right] {$u_\pi$};
				\draw (11,12) node[above left] {$v_\pi$};
				\draw (11,18) node {$\bullet$};
				\draw (11,12) node {$\bullet$};
				\draw (14,15) node[right,Gray] {$\gamma$};
				\draw (11,16) node[Green,right] {$\pi'$};
			\end{tikzpicture}
			\caption{Example of construction of the shortcut $\pi'$ in two dimensions. The shortcut $\pi'$ is represented in green, the path $\gamma$ in gray, the ball $B_\infty(\cC(T),\lll)$ in orange and the forbidden zone by the hatched area}\label{f: Construction du shortcut pi'.}
		\end{center}
	\end{figure}
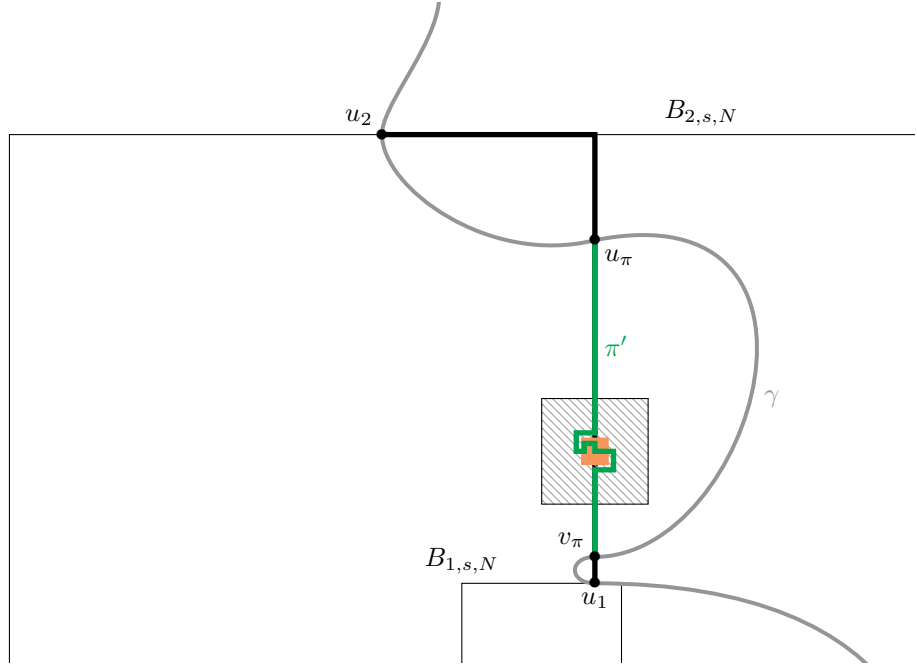
	
	Let $u_\pi$ be the last vertex of $\pi$ belonging to $\gamma$ before $\pi$ visits the forbidden zone and $v_\pi$ be the first vertex of $\pi$ belonging to $\gamma$ after the forbidden zone. 
	One can check that we can build a path, denoted by $\pi'$ for the remaining of the proof, such that:
	\begin{itemize}
		\item $\pi'$ is a self-avoiding path from $u_\pi$ to $v_\pi$,
		\item $\pi'$ is the concatenation of the subpath of $\pi$ between $u_\pi$ and the forbidden zone, then of a path entirely contained in the forbidden zone and then of the subpath of $\pi$ between the forbidden zone and $v_\pi$,
		\item $\pi'$ visits $B_\infty(\cC(T),\lll)$ for the first time in $\theta_{-\cC(T)} \ulm$ and for the last time in $\theta_{-\cC(T)} \vlm$, and between these two vertices, $\pi'$ is entirely contained in $B_\infty(\cC(T),\lll)$. Furthermore, in the case \ref{c: Case I.}, between these two vertices, $\pi'$ is equal to $\theta_{\cC(T)} \pi_\infty$ where $\pi_\infty$ is defined in Assumption \ref{OP7}. Note that, if $T' \in \cB^*(T)$ (where $\cB^*(T)$ is defined in Lemma \ref{Lemme pour les inégalités avec les indicatrices dans la modification.}), then $\theta_{\cC(T)} \pi_\infty$ has finite passage time in the environment $\Tu$,
		\item we have an upper bound for the number of edges in $\pi'$:
		\begin{equation}
			|\pi'|_e \le \|u_\pi-v_\pi\|_1 + |B_\infty(0,\lll+1)|_e.\label{eq: taille du chemin pi prime.}
		\end{equation}
	\end{itemize}
	Note that, by the second item above and the definition of $u_\pi$ and $v_\pi$, $\pi'$ only has $u_\pi$ and $v_\pi$ in common with $\gamma$. See Figure \ref{f: Construction du shortcut pi'.} for an example of construction of the shortcut $\pi'$.

	\paragraph{Beginning of the modification.}
	
	There are two cases for the beginning of the modification depending on whether $\r=0$ or $\r>0$ and on the number of edges in $\gamma_{u_\pi,v_\pi}$, denoted by $|\gamma_{u_\pi,v_\pi}|_e$. We have to distinguish two cases because we must be able to have a lower bound on the passage time of $\gamma_{u_\pi,v_\pi}$. To this aim, if $\gamma_{u_\pi,v_\pi}$ takes enough edges, we can use the second property of a typical box and if it is not the case, we can have a lower bound using the number of edges of $\gamma_{u_\pi,v_\pi}$ and $\r$ if $\r>0$. If $\r=0$ and if we can not use the second property of a typical box, then we use the modification to increase the passage times of $\gamma_{u_\pi,v_\pi}$. We describe the modification in each case. See Figure \ref{f: Modification.} for a representation of the objects involved in the modification.
	
	\subparagraph*{Case A: assume $\r=0$ and $|\gamma_{u_\pi,v_\pi}|_e < N$.}
	The beginning of the modification is the following.
	\begin{itemize}
		\item The edges of $\Emid(T)$ are the edges $e$ belonging to $\gamma_{u_\pi,v_\pi}$ and such that $T(e) < \r + \delta$.
		\item Recall that $\cC(T)$ is defined at the beginning of Section \ref{Sous-section modification}. 
		\item The edges of $\Em(T)$ are the edges $e$ of $\Bts$ satisfying the following two conditions:
		\begin{itemize}
			\item $e$ belongs to ($\gamma \setminus \gamma_{u_\pi,v_\pi}$ or to $\pi'$) but not to $B_\infty(\cC(T),\lll)$,
			\item $T(e) \ge \r + \delta'$.
		\end{itemize}
	\end{itemize}

	\subparagraph*{Case B: assume $\r>0$ or $|\gamma_{u_\pi,v_\pi}|_e \ge N$.}
	The beginning of the modification is the following.
	\begin{itemize}
		\item $\Emid(T) = \emptyset$.
		\item Recall that $\cC(T)$ is defined at the beginning of Section \ref{Sous-section modification}. 
		\item The edges of $\Em(T)$ are the edges $e$ of $\Bts$ satisfying the following two conditions:
		\begin{itemize}
			\item $e$ belongs to $\pi'$ but not to $B_\infty(\cC(T),\lll)$,
			\item $T(e) \ge \r + \delta'$.
		\end{itemize}
	\end{itemize}

	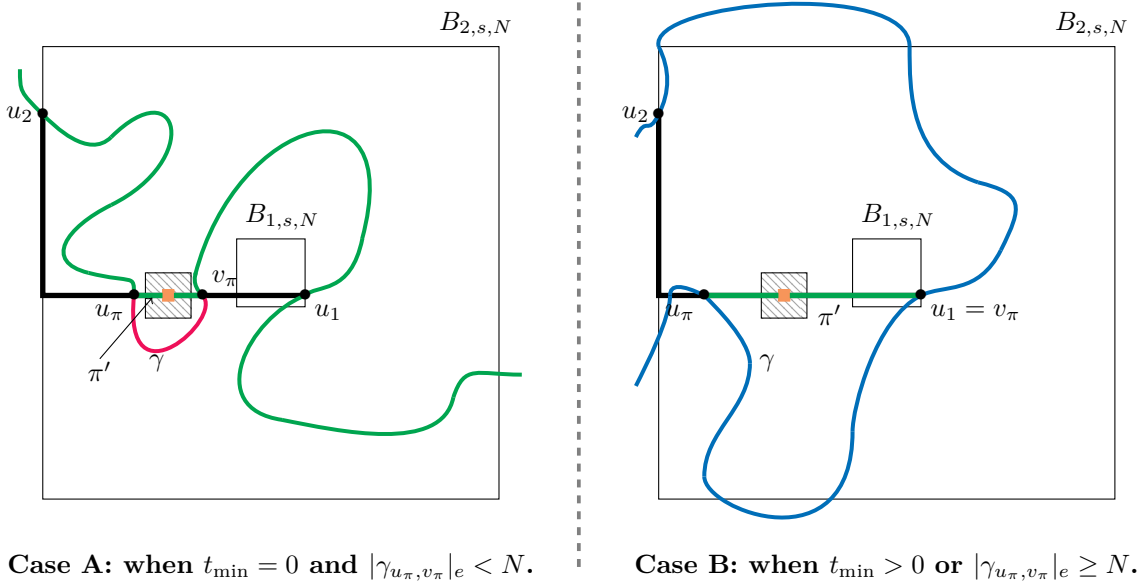
\begin{figure}[!t]
		\begin{center}
			\begin{tikzpicture}[scale=0.3]
				\draw (0,0) rectangle (20,20);
				\draw (8.5,8.5) rectangle (11.5,11.5);
				\draw (19,20) node[above] {$\Bds$};
				\draw (10.5,11.5) node[above] {$\Bus$};
				\draw[line width=2pt] (0,17) -- (0,9) -- (11.5,9); 
				\draw[pattern= north west lines, pattern color=gray!70] (4.5,8) rectangle (6.5,10);
				\draw[Peach,fill=Peach] (5.25,8.75) rectangle (5.75,9.25);
				\draw[line width=1.5pt,Green] (-1,19) .. controls (-1,18) and (-1,18) .. (0,17);
				\draw[line width=1.5pt,Green] (0,17) .. controls (0.5,16.5) and (2,15) .. (3,16);
				\draw[line width=1.5pt,Green] (3,16) .. controls (6,19) and (7,15) .. (3,13);
				\draw[line width=1.5pt,Green] (3,13) .. controls (1,12) and (0,10.5) .. (2,10);
				\draw[line width=1.5pt,Green] (2,10) .. controls (4,9.5) and (4,10) .. (4,9);
				\draw[line width=1.5pt,OrangeRed] (4,9) .. controls (3.5,4) and (8,8) .. (7,9);
				\draw[line width=1.5pt,Green] (7,9) .. controls (6.5,9.5) and (6.7,9.9) .. (6.7,10);
				\draw[line width=1.5pt,Green] (6.68,9.9) .. controls (8.5,17) and (16,19) .. (14,11.5);
				\draw[line width=1.5pt,Green] (14.01,11.51) .. controls (13.5,9) and (12,9.5) .. (11.5,9);
				\draw[line width=1.5pt,Green] (11.5,9) .. controls (9,8) and (7,4) .. (10.5,3.5);
				\draw[line width=1.5pt,Green] (10.49,3.5) .. controls (13,3) and (17.5,2) .. (18,4.5);
				\draw[line width=1.5pt,Green] (18,4.5) .. controls (18.5,6) and (19,5.5) .. (21,5.5);
				\draw[line width=2pt,Green] (4,9) -- (5.25,9);
				\draw[line width=2pt,Green] (5.75,9) -- (7,9);
				\draw[->] (2.5,6.2) -- (4.8,8.9);
				\draw (0,17) node[left] {$u_2$};
				\draw (11.5,9) node[below right] {$u_1$};
				\draw (0,17) node {$\bullet$};
				\draw (11.5,9) node {$\bullet$};
				\draw (4,9) node[below left] {$u_\pi$};
				\draw (7,9) node[above right] {$v_\pi$};
				\draw (4,9) node {$\bullet$};
				\draw (7,9) node {$\bullet$};
				\draw (5,6) node {$\gamma$};
				\draw (2.5,6.5) node[below] {$\pi'$};
				\draw (10,-3) node {\textbf{Case A: when $\r=0$ and $|\gamma_{u_\pi,v_\pi}|_e<N$.}};
				
				\begin{scope}[xshift=27cm]
					\draw (0,0) rectangle (20,20);
					\draw (8.5,8.5) rectangle (11.5,11.5);
					\draw (19,20) node[above] {$\Bds$};
					\draw (10.5,11.5) node[above] {$\Bus$};
					\draw[line width=2pt] (0,17) -- (0,9) -- (11.5,9); 
					\draw[pattern= north west lines, pattern color=gray!70] (4.5,8) rectangle (6.5,10);
					\draw[Peach,fill=Peach] (5.25,8.75) rectangle (5.75,9.25);
					\draw[line width=1.5pt,color=NavyBlue] (-1,16) .. controls (-0.5,17) and (-0.5,16) .. (0,17);
					\draw[line width=1.5pt,color=NavyBlue] (0,17) .. controls (0.5,18) and (0.5,19) .. (0,20);
					\draw[line width=1.5pt,color=NavyBlue] (0,20) .. controls (-0.5,22) and (11,23) .. (11,20);
					\draw[line width=1.5pt,color=NavyBlue] (11,20) .. controls (11,18) and (11,15) .. (14,14);
					\draw[line width=1.5pt,color=NavyBlue] (14,14) .. controls (16,13.5) and (16,13) .. (15,11);
					\draw[line width=1.5pt,color=NavyBlue] (15,11) .. controls (14,9) and (12.5,9.5) .. (11.5,9);
					\draw[line width=1.5pt,color=NavyBlue] (11.5,9) .. controls (9.5,8) and (8.5,4) .. (8.5,3);
					\draw[line width=1.5pt,color=NavyBlue] (8.5,3) .. controls (8.5,-3) and (2,-0.5) .. (2,1);
					\draw[line width=1.5pt,color=NavyBlue] (2,1) .. controls (2,3) and (4,4) .. (4,6);
					\draw[line width=1.5pt,color=NavyBlue] (4,6) .. controls (4,7) and (2.5,8.5) .. (2,9);
					\draw[line width=1.5pt,color=NavyBlue] (2,9) .. controls (1,9.5) and (0.5,9.5) .. (0.5,9);
					\draw[line width=1.5pt,color=NavyBlue] (0.5,9) .. controls (0.5,8) and (-0.5,6) .. (-1,5);
					\draw[line width=2pt,Green] (2,9) -- (5.25,9);
					\draw[line width=2pt,Green] (5.75,9) -- (11.5,9);
					\draw (0,17) node[left] {$u_2$};
					\draw (11.5,9) node[below right] {$u_1=v_\pi$};
					\draw (0,17) node {$\bullet$};
					\draw (11.5,9) node {$\bullet$};
					\draw (2,9) node[below left] {$u_\pi$};
					\draw (2,9) node {$\bullet$};
					\draw (4,6) node[right] {$\gamma$};
					\draw (7.5,9) node[below] {$\pi'$};
					\draw (10,-3) node {\textbf{Case B: when $\r>0$ or $|\gamma_{u_\pi,v_\pi}|_e \ge N$.}};
				\end{scope}
			
				\draw[line width=1.4pt,color=gray,dashed] (23.5,-3) -- (23.5,22);
			\end{tikzpicture}
			\caption{The modification. In the cases A and B, the pattern centered in $\cC(T)$ is represented in orange and the forbidden zone by the hatched area.
			In the case A, $\gamma$ is the path composed by the edges in green up to $u_\pi$, then the edges in red and then the edges in green from $v_\pi$. In the case B, $\gamma$ is the path composed by the edges in blue. 
			In the cases A and B, every edge of $\Em(T)$ belongs to the green part of the figure. When the modification is successful, the passage times of the edges in green which are greater than or equal to $\r+\delta'$ are replaced by passage times smaller than $\r+\delta'$. In the case A, every edge of $\Emid(T)$ belongs to the red part. When the modification is successful, the passage times of the edges in red which are smaller than $\r+\delta$ are replaced by passage times belonging to $(\r+\delta,\nu_0)$. In the case B, the passage times of the edges in blue are not modified. The boundary of $\Bts$ is not represented even if $\Bds$ is included in $\Bts$. The edges of $\Einf(T)$ or $\Ep(T)$ (depending on the case \ref{c: Case I.} or \ref{c: Case III.}) are all edges which are not in green, red, orange and blue.}\label{f: Modification.}
		\end{center}
	\end{figure}

	\paragraph{End of the modification.}\label{Paragraphe fin de la modification.}
	
	Up to now, we have defined $\Em(T)$, $\Emid(T)$ and $\cC(T)$. Note that $\Em(T)$ and $\Emid(T)$ disjoint sets included in $\Bts \cap (\gamma \cup \pi')$. 
	It remains to define $\Ep(T)$ and $\Einf(T)$. 
	There are two cases depending on whether there can be edges with infinite passage times or not.
	
	\subparagraph*{In the case \ref{c: Case I.}.}
	In this case, $\Ep(T)=\emptyset$ and the edges of $\Einf(T)$ are the edges of $\Bts$ which does not belong to $B_\infty(\cC(T),\lll)$, to $\pi'$ or to $\gamma$. 
	
	\subparagraph*{In the case \ref{c: Case III.}.}
	In this case, $\Einf(T) = \emptyset$ and the edges of $\Ep(T)$ are the edges of $\Bts$ which does not belong to $B_\infty(\cC(T),\lll)$, to $\pi'$ or to $\gamma$.
	
	\subsubsection{Proof of Lemma \ref{Lemme pour les inégalités avec les indicatrices dans la modification.}: consequences of the modification}\label{Section conséquences de la modification.}
	
	Assume for the remaining of the proof that the event 
	\[\{T \in \cM^k(\l)\} \cap \{S^k_\l(T)=s\} \cap \{T' \in \cB^*(T)\} \text{ occurs,}\]
	where $\cB^*(T)$ is defined in (ii) of Lemma \ref{Lemme pour les inégalités avec les indicatrices dans la modification.}. 
	We now state some consequences of the modification useful for the following. %We sometimes need to distinguish the case \ref{c: Case I.} and the case \ref{c: Case III.}. In these two cases:
	\begin{enumerate}[label=(\alph*)]
		\item\label{Cofj en premier} If $|\gamma_{u_\pi,v_\pi}|_e < N$, every edge in $\gamma_{u_\pi,v_\pi}$ belongs to $\Bts$. Thus, every edge belonging to $\Emid(T)$ belongs to $\Bts$.
		\item\label{Cofc} $B_\infty(\cC(T),\lll)$ is entirely contained in $\Bts$.
		\item\label{Cofa} We have $\Tu(\gamma) \le T(\gamma)$. Furthermore, for all vertices $u$ and $v$ in $\partial \Bts$ visited by $\gamma$, $\Tu(\gamma_{u,v}) \le T(\gamma_{u,v})$.
		\item\label{Cofb} $\Tu(\pi')<\Tu(\gamma_{u_\pi,v_\pi} \cap \Bts)$.
		\item\label{Cofde} There is only one pattern entirely contained in $\Bts$ in the environment $\Tu$, which is the one centered in $\cC(T)$. 
	\end{enumerate}
	%In the case \ref{c: Case III.}:
	%\begin{enumerate}[label=(\alph*),resume]
	%	\item\label{Coff} If a path $\pi_0$ takes an edge of $\Bts$ whose time in the environment $\Tu$ is greater than $\nu_1$, then \[\Tu(\pi_0)>T(\pi_0).\]
	%\end{enumerate}
	%In the two cases:
	\begin{enumerate}[label=(\alph*),resume]
		%\item\label{Cofh} Let $\pi_0$ be a path in $\cC_\infty$ in the case with infinite passage times and a path which does not take edges whose time is greater than $\nu_1$ in the case with finite unbounded passage times. The only edges of $\Bts$ that $\pi_0$ can take are the edges of $\gamma$, $\pi'$ or $B_\infty(\cC(T),\lll)$. Furthermore, every vertex in $\partial \Bts$ that $\pi_0$ takes before taking an edge of $\Bts$ is a vertex of $\gamma$. Finally, since $\gamma$ is a self-avoiding path, if $\pi_0$ is entirely contained in $\Bts$ and links two vertices of $\gamma$ without taking edges of $\pi'$, then $\pi_0$ is exactly $\gamma$ between these two vertices.
		%\item\label{Cofg} A self-avoiding path from $\partial \Bts$  to $\partial \Bts$ which does not take edges whose time is greater than $\nu_1$ in the case \ref{c: Case III.} or edges whose time is equal to $\infty$ in the case \ref{c: Case I.}, and which takes an edge of $\pi'$ takes the pattern centered in $\cC(T)$.
		\item\label{Cofhg} Let $\pi_0$ be a self-avoiding path from $\partial \Bts$ to $\partial \Bts$ entirely contained in $\Bts$ such that in the environment $\Tu$:
		\begin{itemize}
			\item in the case \ref{c: Case I.}, it has a finite passage time,
			\item in the case \ref{c: Case III.}, it does not take any edge whose passage time is greater than $\nu_1$,
			\item in the cases \ref{c: Case I.} and \ref{c: Case III.}, it does not take any pattern contained in $\Bts$.
		\end{itemize}
		Then every edge of $\Bts$ belonging to $\pi_0$ belongs to $\gamma$. 
		\item\label{Cofi} Let $\tpi$ be a $k$-penalized path from $0$ to $x$ in the environment $\Tu$ with $\Tu(\tpi)$ finite. Then \[T(\tpi) - \Tu(\tpi) \le T(\gamma) - \Tu(\gamma).\] In other words, no $k$-penalized path can save more time than $\gamma$ during the modification.
	\end{enumerate}

	\begin{proof}[Proof of \ref{Cofj en premier}]
		Since $\pi$ is included in $\Bds$, $u_\pi \in \Bds$. Thus 
		\begin{equation}
			\|u_\pi-sN\|_1 \le r_2N. \label{eq: preuve de (a) 1.}
		\end{equation}
		Assume that $|\gamma_{u_\pi,v_\pi}|_e < N$ and let $z$ be a vertex visited by $\gamma_{u_\pi,v_\pi}$. We have
		\begin{equation}
			\|z-u_\pi\|_1 \le N. \label{eq: preuve de (a) 2.}
		\end{equation}
		Combining \eqref{eq: preuve de (a) 1.} and \eqref{eq: preuve de (a) 2.} gives
		\[\|z-sN\|_1 \le (r_2+1)N < r_3N,\]
		since $r_3 > r_2 + 1$ by \eqref{On fixe r1 r2 et r3.}. 
		Hence $z$ belongs to $\Bts$. 
		For the second part of the property, in the case B of the modification, there is no edge in $\Emid(T)$ and in the case A of the modification all the edges of $\Emid(T)$ belong to $\gamma_{u_\pi,v_\pi}$.
	\end{proof}

	\begin{proof}[Proof of \ref{Cofc}]
		By Lemma \ref{l: Properties of the forbidden zone.}, $B_\infty(\cC(T),\lll)$ is contained in the forbidden zone and the forbidden zone is contained in $\Bds$. Since $r_3>r_2$, $\Bds$ is contained in $\Bts$.
	\end{proof}

	\begin{proof}[Proof of \ref{Cofa}]
		In the case $B$ of the modification, the passage time of every edge of $\gamma$ in $\Tu$ is equal to its passage time in $T$. Thus, property \ref{Cofa} holds. 
		Now, assume the case $A$ of the modification and recall that in this case $\r=0$. The only edges of $\gamma$ whose passage times in $\Tu$ are strictly greater than their passage times in $T$ are those in $\gamma_{u_\pi,v_\pi}$ and all of these edges are contained in $\Bts$ by property \ref{Cofj en premier} above. 
		Hence, to prove property \ref{Cofa}, it is sufficient to prove that $\Tu(\gamma_{u,v}) \le T(\gamma_{u,v})$ when $u$ is the last vertex in $\partial \Bts$ visited by $\gamma$ before it visits $u_\pi$ and $v$ is the first vertex in $\partial \Bts$ visited by $\gamma$ after it visits $v_\pi$. So, let $u$ and $v$ be these vertices. First, since $u_\pi$ belongs to $\Bds$ and $u$ to $\partial \Bts$, we have \[\|u-u_\pi\|_1 \ge (r_3-r_2)N \ge N,\] since $r_3 >
		 r_2+1$ by \eqref{On fixe r1 r2 et r3.}. Hence, by the first property of a typical box, $\gamma_{u,u_\pi}$ takes at least $\rho (r_3-r_2) N$ edges whose passage times is greater than $\delta$. Since $\delta > \delta'$ by \eqref{eq: On fixe delta prime.}, all of these edges belongs to $\Em(T)$ and there are no edges of $\gamma_{u,u_\pi}$ whose passage times have been increased. Thus
	 	\begin{equation}
	 		\Tu(\gamma_{u,u_\pi})-T(\gamma_{u,u_\pi}) \le - \rho(r_3-r_2)N(\delta-\delta'). \label{eq: propriété (b) 1.}
	 	\end{equation}
 		The same arguments give
 		\begin{equation}
 			\Tu(\gamma_{v_\pi,v})-T(\gamma_{v_\pi,v}) \le - \rho(r_3-r_2)N(\delta-\delta'). \label{eq: propriété (b) 2.}
 		\end{equation}
 		Furthermore, since the only edges of $\gamma_{u_\pi,v_\pi}$ whose passage times have been modified belong to $\Emid(T)$,
 		\begin{equation}
 			\Tu(\gamma_{u_\pi,v_\pi})-T(\gamma_{u_\pi,v_\pi}) \le N \nu_0. \label{eq: propriété (b) 3.}
 		\end{equation}
 		Thus, we get
 		\begin{align*}
 			\Tu(\gamma_{u,v})-T(\gamma_{u,v}) &= \Tu(\gamma_{u,u_\pi})-T(\gamma_{u,u_\pi}) + \Tu(\gamma_{u_\pi,v_\pi})-T(\gamma_{u_\pi,v_\pi}) + \Tu(\gamma_{v_\pi,v})-T(\gamma_{v_\pi,v}) \\
 			& \le N (\nu_0 - 2\rho (\delta-\delta') (r_3 - r_2)) \quad \text{ by \eqref{eq: propriété (b) 1.}, \eqref{eq: propriété (b) 2.} and \eqref{eq: propriété (b) 3.},} \\
 			& \le 0,
 		\end{align*}
 		since $r_3 > 2 r_2$, $\delta > 2 \delta'$, $r_3$ is large enough compared to $\nu_0$ by \eqref{On fixe r1 r2 et r3.} and $N \ge 1$ by \eqref{eq: On fixe N.}.
	\end{proof}

	\begin{figure}
		\begin{center}
			\begin{tikzpicture}[scale=1]
				\clip (-1.5,6.5) rectangle (10.5,12);
				\draw (0,0) rectangle (20,20);
				\draw (8.5,8.5) rectangle (11.5,11.5);
				\draw (0,11) node[left] {$\Bds$};
				\draw (8.5,11) node[left] {$\Bus$};
				%\draw[line width=2pt] (0,17) -- (0,9) -- (11.5,9); 
				\draw[pattern= north west lines, pattern color=gray!70] (4.5,8) rectangle (6.5,10);
				\draw[Peach,fill=Peach] (5.25,8.75) rectangle (5.75,9.25);
				\draw[line width=1.5pt,NavyBlue] (-1,6) .. controls (-2,6.5) and (4,7) .. (4,9);
				\draw[line width=1.5pt,NavyBlue] (4,9) .. controls (3.5,12) and (7,11) .. (7,9);
				\draw[line width=1.5pt,NavyBlue] (7,9) .. controls (7.3,7) and (12,5) .. (14,6);
				\draw (5.5,11) node {$\gamma$};
				\draw[line width=2pt,Green] (4,9) -- (5.25,9);
				\draw[line width=2pt,Green] (5.75,9) -- (7,9);
				\draw (4.8,9) node[below] {$\pi'$};
				\draw[<->] (6.6,8) -- (6.6,8.9);
				\draw (6.55,8.5) node[right] {$r_P$};
				\draw (0,17) node[left] {$u_2$};
				\draw (11.5,9) node[below right] {$u_1$};
				\draw (0,17) node {$\bullet$};
				\draw (11.5,9) node {$\bullet$};
				\draw (4,9) node[below left] {$u_\pi$};
				\draw (7,9) node[above right] {$v_\pi$};
				\draw (4,9) node {$\bullet$};
				\draw (7,9) node {$\bullet$};
			\end{tikzpicture}
			\caption{A picture to illustrate \eqref{eq: Preuve de Cofb figure.}. The legend is the same as in Figure \ref{f: Modification.}. In this example in two dimensions, when $\r>0$ and $|\gamma_{u_\pi,v_\pi}|_e < N$ (a special case of case B of the modification), $\gamma_{u_\pi,v_\pi}$ has to take $\|u_\pi-v_\pi\|_1$ edges in the direction $\epsilon_1$ and also at least $2r_P$ edges in the direction $\epsilon_2$ to avoid the forbidden zone.}\label{f: Preuve de (d).}
		\end{center}
	\end{figure}
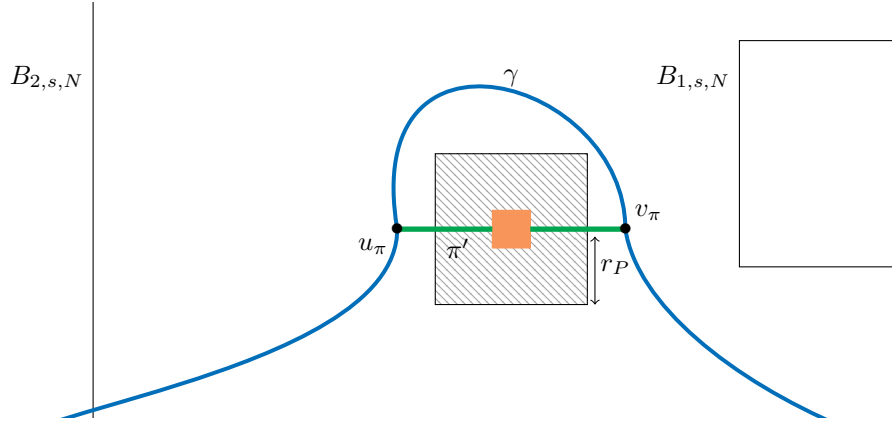

	\begin{proof}[Proof of \ref{Cofb}]
		First, in all cases, by \eqref{eq: taille du chemin pi prime.}, and by \eqref{On fixe TLambda dans le cas infini.} and \eqref{On fixe lLambda et TLambda dans le cas fini non borné.}, 
		\begin{equation}
			\Tu(\pi') \le (\|u_\pi-v_\pi\|_1 + |B_\infty(0,\lll+1)|_e) (\r + \delta') + T^\Lambda. \label{eq: Cofb2.}
		\end{equation}
		To conclude this proof, we distinguish three cases.
		
		\textbf{If $|\gamma_{u_\pi,v_\pi}|_e \ge N$ (case B of the modification).}
		First, let us prove that
		\begin{equation}
			\Tu(\gamma_{u_\pi,v_\pi}\cap\Bts) \ge \|u_\pi-v_\pi\|_1 (\r + \delta).\label{eq: Cofb3.}
		\end{equation}
		Since this is the case $B$ of the modification, the edges of $\gamma_{u_\pi,v_\pi}$ have not been modified. Thus, if $\gamma_{u_\pi,v_\pi}$ is entirely contained in $\Bts$ the second property of the typical boxes gives \eqref{eq: Cofb3.}. If $\gamma_{u_\pi,v_\pi}$ is not entirely contained in $\Bts$, let $u'$ be the first vertex of $\partial \Bts$ visited by $\gamma_{u_\pi,v_\pi}$. The vertex $u_\pi$ is in $\Bds$, so $\|u'-u_\pi\|_1 \ge (r_3-r_2) N \ge N$ since $r_3 > r_2$ by \eqref{On fixe r1 r2 et r3.}. Using that $\|u_\pi-v_\pi\|_1 \le d(r_2+r_1) N$, the fact that $r_3-r_2 > d(r_2+r_1)$ by \eqref{On fixe r1 r2 et r3.} and the second property of a typical box gives:
		\[\Tu(\gamma_{u_\pi,v_\pi}\cap\Bts) \ge \underbrace{\|u'-u_\pi\|_1}_{\ge (r_3-r_2)N} (\r + \delta) \ge (r_3-r_2)N(\r+\delta) \ge \|u_\pi-v_\pi\|_1 (\r + \delta).\]
		This concludes the proof of \eqref{eq: Cofb3.}.
		Then, combining \eqref{eq: Cofb2.} and \eqref{eq: Cofb3.}, and using that $\|u_\pi-v_\pi\|_1 \ge r_P$, we have
		\begin{align*}
			\Tu(\gamma_{u_\pi,v_\pi}\cap\Bts) - \Tu(\pi') & \ge \|u_\pi-v_\pi\|_1 (\delta - \delta') - |B_\infty(0,\lll+1)|_e (\r + \delta') - T^\Lambda \\
			& \ge r_P (\delta - \delta') - |B_\infty(0,\lll+1)|_e (\r + \delta') - T^\Lambda > 0,
		\end{align*} by \eqref{On fixe rP.} and since $\delta > 2 \delta'$. 
		
		\textbf{If $\r=0$ and $|\gamma_{u_\pi,v_\pi}|_e<N$ (case A of the modification).}
		In the environment $\Tu$, all the edges $e$ belonging to $\gamma_{u_\pi,v_\pi}$ have a time greater than $\r + \delta$ and the property \ref{Cofj en premier}, $\gamma_{u_\pi,v_\pi} \cap \Bts = \gamma_{u_\pi,v_\pi}$. Hence
		\begin{equation}
			\Tu(\gamma_{u_\pi,v_\pi}\cap\Bts) \ge \|u_\pi-v_\pi\|_1 (\r + \delta).\label{eq: Cofb1.}
		\end{equation}
		%On the other hand, \eqref{eq: Cofb2.} also holds in this case with the same arguments. 
		We conclude the proof of this case as the previous one combining \eqref{eq: Cofb2.} and \eqref{eq: Cofb1.}.
		%Thus, combining \eqref{eq: Cofb1.} and \eqref{eq: Cofb2.} and using that $\|u_\pi-v_\pi\|_1 \ge r_P$, we have
		%\begin{align*}
		%	\Tu(\gamma_{u_\pi,v_\pi}\cap\Bts) - \Tu(\pi') & \ge \|u_\pi-v_\pi\|_1 (\delta - \delta') - |B_\infty(0,\lll+1)|_e (\r + \delta') - T^\Lambda \\
		%	& \ge r_P (\delta - \delta') - |B_\infty(0,\lll+1)|_e (\r + \delta') - T^\Lambda > 0,
		%\end{align*} since {\color{orange} $r_P$ is large enough.}
	
		\textbf{If $\r>0$ and $|\gamma_{u_\pi,v_\pi}|_e < N$ (case B of the modification).}
		In this case, since $\cC(T)$ belongs to the selected straight segment between $u_2$ and $u_1$ and since $v_\pi$ is visited by $\pi$ after the forbidden zone, $v_\pi$ belongs to the selected straight segment. Thus, the distance between $v_\pi$ and $(\Bds)^c$ is greater than or equal to $\displaystyle \frac{(r_2-r_1)N}{2}$. Then, since $|\gamma_{u_\pi,v_\pi}|_e < N$, we have $\|u_\pi-v_\pi\|_1 < N$. Since $r_2 > 4 r_1$ by \eqref{On fixe r1 r2 et r3.}, $u_\pi$ does not belong to $\partial \Bds \cap \pi$ and thus $\pi_{u_\pi,v_\pi}$ takes edges in only one direction: the direction of the selected straight segment. Denote this direction by $\epsilon_i$. Then $\gamma_{u_\pi,v_\pi}$ has to take $\|u_\pi-v_\pi\|_1$ edges in the direction $\epsilon_i$ but it can not take edges of the forbidden zone (see Figure \ref{f: Preuve de (d).}). Hence 
		\begin{equation}
			|\gamma_{u_\pi,v_\pi}|_e \ge \|u_\pi-v_\pi\|_1 + 2 r_P.\label{eq: Preuve de Cofb figure.}
		\end{equation}
		By the property \ref{Cofj en premier}, we have $|\gamma_{u_\pi,v_\pi}|_e = |\gamma_{u_\pi,v_\pi} \cap \Bts|_e$ and thus 
		\begin{equation}
			\Tu(\gamma_{u_\pi,v_\pi}\cap\Bts) \ge (\|u_\pi-v_\pi\|_1 + 2 r_P) \r. \label{eq: Cofb4.}
		\end{equation}
		Combining \eqref{eq: Cofb2.} and \eqref{eq: Cofb4.} gives
		\begin{align*}
			\Tu(\gamma_{u_\pi,v_\pi}\cap\Bts) - \Tu(\pi') & \ge 2 r_P \r - \|u_\pi-v_\pi\|_1 \delta'  - |B_\infty(0,\lll+1)|_e (\r+\delta') - T^\Lambda \\
			& \ge 2 r_P \r - N \delta'  - |B_\infty(0,\lll+1)|_e (\r+\delta') - T^\Lambda  > 0,
		\end{align*}
		since $N \delta' \le 1$ by \eqref{eq: On fixe delta prime.} and since $r_P$ is large enough by \eqref{On fixe rP.}.
	\end{proof}

	\begin{proof}[Proof of \ref{Cofde}]
		\textbf{In the case \ref{c: Case I.}.}
		Recall that the pattern satisfies the boundary condition (see Definition \ref{Définition boundary condition.}). Thus, if there is a pattern entirely contained in $\Bts$ centered in a vertex $z$, it implies that there exists a path of length $2(\lll-1)$ such that: 
		\begin{itemize}
			\item it goes from $z-(\lll-1)\epsilon_1+(\lll-1)\epsilon_2$ to $z+(\lll-1)\epsilon_1+(\lll-1)\epsilon_2$ only using edges in the direction $\epsilon_1$,
			\item its passage time is finite,
			\item there exists no path with finite passage time from $\partial \Bts$ to one of its vertices.
		\end{itemize}
		  
		In the environment $\Tu$, the only edges with finite passage times are edges belonging to $\gamma$, $\pi'$, $\cS_{\cC(T),\lll}$ and $B_\infty(\cC(T),\lll-3)$. For every vertex $z$ belonging to $\gamma$ and $\pi'$, there exists a path from $\partial \Bts$ to $z$ with finite passage time. Furthermore, there is no path of length $2(\lll-1)$ with finite passage time using only edges in the direction $\epsilon_1$ having at least one edge in $B_\infty(\cC(T),\lll-3)$ and which does not visit any vertex of $\pi'$. Thus, the only path of length $2(\lll-1)$ satisfying the three conditions above is the one from $\cC(T)-(\lll-1)\epsilon_1+(\lll-1)\epsilon_2$ to $\cC(T)+(\lll-1)\epsilon_1+(\lll-1)\epsilon_2$ and the only pattern entirely contained in $\Bts$ is the one centered in $\cC(T)$. 
		
		\textbf{In the case \ref{c: Case III.}.}
		If, in the environment $\Tu$, there is a pattern entirely contained in $\Bts$ centered in a vertex $z$, then for every edge $e \in \partial B_\infty(z,\lll)$, $T(e) \in (\nu_1,\nu_2)$ by the assumption (AF-4') in Section \ref{Sous-section bordure des motifs.}. 
		Since $\Bts$ is a typical box in the environment $T$, for every edge $e \in \Bts$, $T(e) \le \nu_1$. The only edges $e$ such that $\Tu(e) > T(e)$ are:
		\begin{itemize}
			\item the edges of $\Emid(T)$ when $\r=0$ but for every edge $e \in \Emid(T)$, $\Tu(e) < \nu_0 < \nu_1$ since $\nu_1>\nu_0$, 
			\item the edges of $\Ep(T)$ but for every edge $e \in \Ep(T)$, $\Tu(e) > \nu_2$,
			\item some edges in $B_\infty(\cC(T),\lll)$.
		\end{itemize}
		Thus, if an edge $e \in \Bts$ is such that $\Tu(e) \in (\nu_1,\nu_2)$, this edge belongs to $B_\infty(\cC(T),\lll)$. Thus there is only one pattern entirely contained in $\Bts$ which is the one centered in $\cC(T)$. 
	\end{proof}

	\begin{proof}[Proof of \ref{Cofhg}]
		Let $\pi_0$ be a path from $\partial \Bts$ to $\partial \Bts$ entirely contained in $\Bts$ such that in the environment $\Tu$, it has a finite passage time in the case \ref{c: Case I.} and it does not take any edge whose passage time is greater than $\nu_1$ in the case \ref{c: Case III.}. Then the only edges of $\Bts$ that $\pi_0$ can take are edges of $\gamma$, $\pi'$ and some edges of $B_\infty(\cC(T),\lll)$. Furthermore, since by Lemma \ref{l: Properties of the forbidden zone.}, $\gamma$ does not take any edge of the forbidden zone, if $\pi_0$ links two vertices of $\gamma$ without taking edges of $\pi'$, then $\pi_0$ is exactly $\gamma$ between these two vertices.
		
		Now, assume that $\pi_0$ does not take any pattern entirely contained in $\Bts$.
		Since $\pi_0$ can not take edges of $B_\infty(\cC(T),\lll)$ without taking the pattern centered in $\cC(T)$, it remains to prove that $\pi_0$ does not take any edge of $\pi'$. But since $\pi'$ is a self-avoiding path entirely contained in $\Bds$ which takes the pattern centered in $\cC(T)$, and which has only two vertices in common with $\gamma$, if $\pi_0$ takes an edge of $\pi'$, $\pi_0$ takes the pattern centered in $\cC(T)$, which is impossible.		 
	\end{proof}

	\begin{proof}[Proof of \ref{Cofi}]
		Let $\tpi$ be a $k$-penalized path from $0$ to $x$ in the environment $\Tu$ with finite passage time in the environment $\Tu$. There are three cases. 
		
		\textbf{First case.} If $\tpi$ does not take edges of $\Bts$, $\Tu(\tpi)=T(\tpi)$ since the only edges whose passage time have been modified are edges of $\Bts$. Property \ref{Cofi} follows from \ref{Cofa}. 
		
		\textbf{Second case.} Assume that we are in the case \ref{c: Case III.} and that $\tpi$ takes an edge $e' \in \Bts$ such that $\Tu(e') \ge \nu_1$. Then, 
		\[\Tu(\tpi) = \sum_{e \in \tpi} \Tu(e) = \sum_{e \in \tpi \cap \Bts} \Tu(e) + \sum_{e \in \tpi \cap \Bts^c} \Tu(e).\]Since $e' \in \tpi \cap \Bts$ and since $\Bts$ is a typical box, we have using the fourth property of a typical box \[\sum_{e \in \tpi \cap \Bts} \Tu(e) \ge \nu_1 > \sum_{e \in \Bts} T(e) \ge \sum_{e \in \tpi \cap \Bts} T(e).\]
		Furthermore, the passage times of the edges outside $\Bts$ have not been modified. Hence, \[\sum_{e \in \tpi \cap \Bts^c} \Tu(e) = \sum_{e \in \tpi \cap \Bts^c} T(e).\]
		Thus, 
		\begin{equation}
			\Tu(\tpi) = \sum_{e \in \tpi \cap \Bts} \Tu(e) + \sum_{e \in \tpi \cap \Bts^c} \Tu(e) > \sum_{e \in \tpi \cap \Bts} T(e) + \sum_{e \in \tpi \cap \Bts^c} T(e) = T(\tpi). \label{eq: 1 preuve Cofi.}
		\end{equation}
		In this case, property \ref{Cofi} follows from \eqref{eq: 1 preuve Cofi.} and from property \ref{Cofa}.
		
		\textbf{Third case.}
		Now assume that $\tpi$ takes at least one edge in $\Bts$ and that in the case \ref{c: Case III.}, $\tpi$ does not take any edge in $\Bts$ having a passage time greater than or equal to $\nu_1$. Since $\tpi$ has a finite passage time in the environment $\Tu$, in the case \ref{c: Case I.}, $\tpi$ does not take any edge $e'$ in $\Bts$ such that $\Tu(e') = \infty$. Since $\tpi$ is a $k$-penalized path, it does not take any pattern entirely contained in $\Bts$. Hence, using \ref{Cofhg}, the only edges in $\Bts$ that $\tpi$ can take are edges of $\gamma$. So, let $u_1,v_1,\dots,u_\kappa,v_\kappa$ be the successive entry and exit points of $\tpi$ in $\Bts$, we get for all $i \in \{1,\dots,\kappa\}$, $\tpi_{u_i,v_i} = \gamma_{u_i,v_i}$.
		Furthermore, we also get that $T(\tpi)$ is finite. Indeed, since the only edges whose passage time have been modified are the edges in $\Bts$ and since $\Tu(\tpi)$ is finite, the only edges with infinite passage time in the environment $T$ that $\tpi$ can take are edges in $\Bts$. But these edges being edges of $\gamma$ which has a finite passage time in the environment $T$, $T(\tpi)$ is finite. 
		
		Thus, using again that the only edges whose passage time have been modified are the edges of $\Bts$, we have 
		\begin{equation}
			T(\tpi) - \Tu(\tpi) = \sum_{i \in \{1,\dots,\kappa\}} T(\tpi_{u_i,v_i}) - \Tu(\tpi_{u_i,v_i}) = \sum_{i \in \{1,\dots,\kappa\}} T(\gamma_{u_i,v_i}) - \Tu(\gamma_{u_i,v_i}) .\label{Equation i 1.}
		\end{equation}
		Now, using \ref{Cofa}, 
		\begin{equation}
			\sum_{i \in \{1,\dots,\kappa\}} T(\gamma_{u_i,v_i}) - \Tu(\gamma_{u_i,v_i}) \le T(\gamma) - \Tu(\gamma).\label{Equation i 2.}
		\end{equation}
		Thus, combining \eqref{Equation i 1.} and \eqref{Equation i 2.}, we also get in this last case that
		\[T(\tpi) - \Tu(\tpi) \le T(\gamma) - \Tu(\gamma).\]
	\end{proof}
	
	\subsubsection{End of the proof of Lemma \ref{Lemme pour les inégalités avec les indicatrices dans la modification.}}\label{Sous-section fin de la preuve du lemme.}
	
	We prove Lemma \ref{Lemme pour les inégalités avec les indicatrices dans la modification.} with the sets $\Em(T)$, $\Emid(T)$, $\Ep(T)$ and $\Einf(T)$ and the vertex $\cC(T)$ defined in Section \ref{Sous-section modification}.
	Let us first prove item (i) of this lemma. $\Emid(T)$ is contained in $\Bts$ by property \ref{Cofj en premier} in Section \ref{Section conséquences de la modification.}. Using property \ref{Cofc} of Section \ref{Section conséquences de la modification.}, we get that $B_\infty(\cC(T),\lll)$ is contained in $\Bts$. $\Em(T)$, $\Einf(T)$ and $\Ep(T)$ are contained in $\Bts$ by their definitions. To get that these sets are pairwise disjoint in both cases, we only have to prove that:
	\begin{itemize}
		\item $\gamma_{u_\pi,v_\pi}$ does not visit any edge of $B_\infty(\cC(T),\lll)$. This comes from the fact that $B_\infty(\cC(T),\lll) \subset B_\infty(\cC(T),r_P)$, which comes from the fact that $r_P > \lll+1$ by \eqref{On fixe rP.}. By property (iii) of a typical box, in the case \ref{c: Case I.}, in the environment $T$, for every edge $e \in B_\infty(\cC(T),r_P)$, $T(e) = \infty$ but $T(\gamma_{u_\pi,v_\pi})<\infty$. Hence, $\gamma_{u_\pi,v_\pi}$ does not take any edge in $B_\infty(\cC(T),r_P)$. In the case \ref{c: Case III.}, by property (iii) of a typical box, the event $\theta_{\cC(T)} \cT$ holds with respect to the environment $T$. Recall Remark \ref{Remarque pas de motif dans une boîte typique.}: there is no pattern in $\Bts$. Since $\cT$ satisfies the second condition in Lemma \ref{Lemme pour les clusters dans les typical boxes dans le cas finin non borné.} and since $\gamma_{u_\pi,v_\pi}$ is a $k$-geodesic, $\gamma_{u_\pi,v_\pi}$ does not visit any edge of the forbidden zone. So $\gamma_{u_\pi,v_\pi}$ does not visit any edge of $B_\infty(\cC(T),\lll)$.
		\item $\gamma_{u_\pi,v_\pi}$ and $\pi'$ do not have any edge in common by the definition of $u_\pi$, $v_\pi$ and $\pi'$.
	\end{itemize}
	To get item (ii), fix $\eta = \P (T \in \cA^\Lambda) \tilde{p}^{|\Bts|}$, where, in the case \ref{c: Case I.} and if $\r=0$,
	\[\tilde{p} = \min(\L([\r,\r+\delta')),\L((\r+\delta,\nu_0)),\L(\infty)),\]
	in the case \ref{c: Case I.} and if $\r>0$,
	\[\tilde{p} = \min(\L([\r,\r+\delta']),\L(\infty)),\]
	in the case \ref{c: Case III.} and if $\r=0$,
	\[\tilde{p} = \min(\L([\r,\r+\delta')),\L((\r+\delta,\nu_0)),\L((\nu_2,\infty))),\]
	and in the case \ref{c: Case III.} and if $\r>0$,
	\[\tilde{p} = \min(\L([\r,\r+\delta')),\L((\nu_2,\infty))),\]
	Thus, $\eta$ only depends on $\L$, the pattern and $N$ and we have that
	\[\P\left(T' \in \cB^*(T) | T\right) \ge \P (T \in \cA^\Lambda) \tilde{p}^{|\Bts|} = \eta.\]
	Now, let us prove item (iii) of Lemma \ref{Lemme pour les inégalités avec les indicatrices dans la modification.}. Let $\gu$ be the selected $k$-geodesic in the environment $\Tu$ if it exists. The aim is to prove the following properties in the environment $\Tu$:
	\begin{enumerate}[label=(C\arabic*)]
		\item\label{C1} $\gu$ exists, i.e.\ there exists a $k$-geodesic having at least $Q_n$ boxes in its $S^k$-sequence,
		\item\label{C2} $\gu$ does not have a shortcut in the first $\l-1$ boxes of its $S^k$-sequence,
		\item\label{C3} $\gu$ has a shortcut in the $\l$-th box of its $S^k$-sequence,
		\item\label{C4} $\Bts$ is the $\l$-th box of the $S^k$-sequence of $\gu$. 
	\end{enumerate}

	To get these four properties, we use the following ones:
	\begin{enumerate}[label=(P\arabic*)]
		\item\label{P1} a $k$-box different from $\Bts$ is a typical box in the environment $T$ if and only if it is a typical box in the environment $\Tu$,
		\item\label{P4} a path has a shortcut in a $k$-box different from $\Bts$ in the environment $T$ if and only if it has a shortcut in this box in the environment $\Tu$,  
		\item\label{P2} $\gamma$ has a shortcut in $\Bts$ in the environment $\Tu$ and then, $\Bts$ is successful in the environment $\Tu$ for $\gamma$,
		\item\label{P3} $\gu$ exists and $\gu=\gamma$.
	\end{enumerate}

	To conclude the proof, we have to prove \ref{P1}, \ref{P4}, \ref{P2} and \ref{P3}. Indeed, by \ref{P1} and \ref{P4}, a $k$-box different from $\Bts$ is successful for a path $\tpi$ in the environment $T$ if and only if it is successful for $\tpi$ in the environment $\Tu$. Thus, since $\Bts$ is a typical box for $\gamma$ in the environment $T$, using \ref{P2}, the successful boxes crossed by $\gamma$ in the environments $T$ et $\Tu$ are the same. Furthermore, if we have \ref{P3}, then $\gamma$ is a $k$-geodesic in the environment $\Tu$ and we can define its $S^k$-sequence in this environment. We get that the $S^k$-sequence of $\gamma$ is the same in the environments $T$ and $\Tu$. 
	Hence, using again \ref{P3}, we get \ref{C1}, we get \ref{C2} by \ref{H2} and we get \ref{C3} and \ref{C4} by \ref{H3} using again \ref{P2}.
	
	At this stage of the proof, we easily get \ref{P1}, \ref{P4} and \ref{P2} (which is the aim of the following paragraph) but the proof of \ref{P3} is a bit longer (this is the aim of Section \ref{Paragraphe P3.}).
	
	\paragraph{Proof of properties \ref{P1}, \ref{P4} and \ref{P2}.}
	
	We get \ref{P1} using that the fact that a box is typical only depends on the edges of the box (by Lemma \ref{Lemme propriétés boîtes typiques.}), that every $k$-box different from $\Bts$ does not have edges in common with $\Bts$ and that the only edges whose time has been modified are edges belonging to $\Bts$. 
	
	\ref{P4} uses the same arguments than above. The fact that a path has a shortcut in a $k$-box only depends on the edges of the box. 
	
	We get \ref{P2} by considering the path $\pi'$ defined at the beginning of Section \ref{Sous-section modification}. By construction $\pi'$ is entirely contained in $\Bts$, $\pi'$ and $\gamma$ only have $u_\pi$ and $v_\pi$ in common, $\pi'$ takes the pattern and $\Tu(\gamma_{u_\pi,v_\pi} \cap \Bts) > \Tu(\pi')$ by \ref{Cofb}.
	
	\paragraph{Proof of \ref{P3}: $\gamma$ is the selected $k$-geodesic in the environment $\Tu$.}\label{Paragraphe P3.}
	
	To prove this property, we prove the following ones in the last four lemmas of this section:
	\begin{itemize}
		\item $\gamma$ is a $k$-penalized path in the environment $\Tu$,
		\item every $k$-penalized path from $0$ to $x$ in the environment $\Tu$ has a passage time greater than or equal to the passage time of $\gamma$,
		\item if a path is a $k$-geodesic from $0$ to $x$ in the environment $\Tu$, it is also a $k$-geodesic in the environment $T$,
		\item if a $k$-geodesic from $0$ to $x$ in the environment $\Tu$ has at least $Q_n$ boxes in its $S^k$-sequence, it has also at least $Q_n$ boxes in its $S^k$-sequence in the environment $T$. 
	\end{itemize}
	
	We can conclude with these properties. Indeed, with the first two properties above, $\gamma$ is a $k$-geodesic in the environment $\Tu$. As a consequence of \ref{P1}, \ref{P4} and \ref{P2}, it has the same $S^k$-sequence in the environments $T$ and $\Tu$. Thus $\gamma$ has at least $Q_n$ boxes in its $S^k$-sequence and it can be the selected $k$-geodesic. By the last two properties above, we have that the set of the $k$-geodesics having at least $Q_n$ boxes in their $S^k$-sequences in the environment $\Tu$ is included in the set of the $k$-geodesics having at least $Q_n$ boxes in their $S^k$-sequences in the environment $T$. Since $\gamma$ is the first path in the lexicographical order among the paths of this last set, it is also the first path in the lexicographical order in the first set.  
	
	It remains to prove the four properties above. Before proving them, we begin by the following lemma. 
	
	\begin{lemma}\label{Lemme pour dire qu'un chemin $k$-pénalié dans Tu est quand pénalisé dans T.}
		A $k$-penalized path in the environment $\Tu$ with finite passage time in the environment $\Tu$ is also a $k$-penalized path in the environment $T$.
	\end{lemma}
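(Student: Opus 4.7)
The modifications affect only the passage times on edges of $\Bts$, so for any $k$-box $B \neq \Bts$ and any vertex $z$ with $B_\infty(z,\lll) \subset B$, the pattern event $\theta_z T \in \cA^\Lambda$ has exactly the same status under $T$ and under $\Tu$. Since $\tpi$ is literally the same sequence of edges in both environments, $\tpi$ takes a pattern entirely contained in such a $B$ in $T$ if and only if it does in $\Tu$, which is excluded by hypothesis. The whole problem thus reduces to showing that $\tpi$ takes no pattern entirely contained in $\Bts$ under $T$.

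In case \ref{c: Case III.} this is immediate from Remark \ref{Remarque pas de motif dans une boîte typique.}: since $\Bts$ is typical in $T$ by hypothesis \ref{H3}, there is no pattern at all entirely contained in $\Bts$ under $T$, and $\tpi$ trivially takes none. In case \ref{c: Case I.} I would argue by contradiction. Suppose $\tpi$ took a pattern at some $z$ with $B_\infty(z,\lll) \subset \Bts$ under $T$, so that $\theta_z T \in \cA^\Lambda$, and let $\sigma$ denote the subpath of $\tpi$ going from $\theta_{-z}\ulm$ to $\theta_{-z}\vlm$; by definition $\sigma$ lies in $\theta_{-z}\Lambda$, which is well inside $\Bts$. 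Decompose $\tpi \cap \Bts$ into its maximal excursions from $\partial\Bts$ to $\partial\Bts$. Each such excursion is a self-avoiding path entirely contained in $\Bts$ whose $\Tu$-passage time is finite (as $\Tu(\tpi)<\infty$) and which takes no pattern entirely in $\Bts$ (since $\tpi$ does not, being $k$-penalized in $\Tu$). Property \ref{Cofhg} then applies to each such excursion and forces every edge of $\Bts$ it uses to belong to $\gamma$; in particular every edge of $\sigma$ is an edge of $\gamma$.

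It remains to exploit this. The path $\gamma$ is self-avoiding, so a self-avoiding walk whose edges all lie on $\gamma$ must coincide with a contiguous sub-segment of $\gamma$ (possibly traversed in the reverse direction). Applied to $\sigma$, this means that $\gamma$ itself visits both $\theta_{-z}\ulm$ and $\theta_{-z}\vlm$, and the subpath of $\gamma$ between these two vertices is made of the same edges as $\sigma$, so it also lies in $\theta_{-z}\Lambda$. Combined with $\theta_z T \in \cA^\Lambda$, this forces $\gamma$ to take the pattern at $z$ in the $k$-box $\Bts$ under $T$, contradicting the fact that the selected $k$-geodesic $\gamma$ is $k$-penalized in $T$. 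The main technical point I expect to have to handle with some care is the excursion decomposition in the corner case where $0$ or $x$ lies in $\Bts$; however, since the pattern is entirely inside $\Bts$ the excursion of $\tpi$ containing $\sigma$ is automatically a genuine $\partial\Bts$-to-$\partial\Bts$ subpath, so \ref{Cofhg} still applies and the argument closes.
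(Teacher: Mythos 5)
Your proof is correct and follows essentially the same route as the paper: reduce to showing $\tpi$ takes no pattern inside $\Bts$ under $T$, dispatch the (FU) case via Remark \ref{Remarque pas de motif dans une boîte typique.}, and in the (INF) case invoke property \ref{Cofhg} to force every $\Bts$-edge of $\tpi$ onto $\gamma$ and conclude that $\gamma$ would then take the pattern, contradicting \ref{H3}. You spell out one step the paper leaves implicit (that a self-avoiding subpath whose edges all lie on the self-avoiding path $\gamma$ is a contiguous sub-segment of $\gamma$, so $\gamma$ itself would pick up the pattern), which is a genuine clarification, not a divergence.
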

	
	\begin{proof}
		Let $\tpi$ be a $k$-penalized path in the environment $\Tu$ with $\Tu(\tpi)$ finite. 
		Since the edges outside $\Bts$ have not been modified, $\tpi$ takes a pattern entirely contained in a $k$-box different from $\Bts$ in the environment $T$ if and only if it takes a pattern entirely contained in this box in the environment $\Tu$. It remains to prove that $\tpi$ does not take a pattern entirely contained in $\Bts$ in the environment $T$.
		
		In the case \ref{c: Case I.}, since the time of $\tpi$ is finite and since $\tpi$ is a $k$-penalized path in the environment $\Tu$, by property \ref{Cofhg}, the only edges of $\Bts$ that $\tpi$ can take are edges of $\gamma$. It implies that, in the environment $T$, if $\tpi$ takes a pattern entirely contained in $\Bts$, $\gamma$ also takes this pattern, which is impossible since $\gamma$ is a $k$-penalized path in the environment $T$.
		
		In the case \ref{c: Case III.}, it is impossible since $\Bts$ is a typical box in the environment $T$ and there is no pattern in a typical box by Remark \ref{Remarque pas de motif dans une boîte typique.}. 
	\end{proof}
	
	\begin{lemma}\label{Lemme pour dire que gamma est un chemin k-pénalisé dans l'environnement Tu.}
		$\gamma$ is a $k$-penalized path in the environment $\Tu$. 
	\end{lemma}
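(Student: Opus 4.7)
The plan is to show separately that $\gamma$ takes no pattern entirely contained in a $k$-box different from $\Bts$, and that $\gamma$ takes no pattern entirely contained in $\Bts$ itself, in the modified environment $\Tu$.

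For $k$-boxes different from $\Bts$, the argument is immediate: since the modification only affects edges of $\Bts$ (as follows from item (i) of Lemma \ref{Lemme pour les inégalités avec les indicatrices dans la modification.}, which we are in the process of verifying by construction), every edge of such a $k$-box has the same passage time in $T$ and in $\Tu$. Since $\gamma$ is the selected $k$-geodesic in the environment $T$, it is in particular $k$-penalized in $T$, hence takes no pattern entirely contained in any such $k$-box in $T$, and therefore also in $\Tu$.

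For the box $\Bts$ itself, the key tool is property \ref{Cofde} from Section \ref{Section conséquences de la modification.}, which states that in the environment $\Tu$ there is a \emph{unique} pattern entirely contained in $\Bts$, namely the one centered at $\cC(T)$. Thus it suffices to prove that $\gamma$ does not take this particular pattern. By definition of the pattern, this would require $\gamma$ to visit vertices of $B_\infty(\cC(T),\lll)$. However, by Lemma \ref{l: Properties of the forbidden zone.}, the ball $B_\infty(\cC(T),\lll)$ is contained in the forbidden zone $B_\infty(\cC(T),r_P)\setminus \partial B_\infty(\cC(T),r_P)$, and $\gamma$ does not visit any vertex of the forbidden zone. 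Combining these observations, $\gamma$ does not take the pattern centered at $\cC(T)$ in the environment $\Tu$.

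The main subtlety to check is that property \ref{Cofde} is indeed available here, since its proof in Section \ref{Section conséquences de la modification.} relies on the hypothesis $T' \in \cB^*(T)$; but this is exactly the standing assumption under which we are proving Lemma \ref{Lemme pour les inégalités avec les indicatrices dans la modification.} (iii), so the hypothesis is available. Putting the two cases together, $\gamma$ takes no pattern entirely contained in any $k$-box in the environment $\Tu$, which is precisely the definition of a $k$-penalized path. The proof is thus essentially a bookkeeping step assembling Lemma \ref{l: Properties of the forbidden zone.} and property \ref{Cofde}; no new obstacle appears.
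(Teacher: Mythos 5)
Your proposal is correct and follows essentially the same route as the paper's proof: unchanged passage times outside $\Bts$ handle the other $k$-boxes, and for $\Bts$ itself you combine the uniqueness of the pattern (property \ref{Cofde}) with the fact that $B_\infty(\cC(T),\lll)$ lies in the forbidden zone, which $\gamma$ avoids by Lemma \ref{l: Properties of the forbidden zone.}. Your remark that \ref{Cofde} is available because $T'\in\cB^*(T)$ is the standing assumption is accurate and consistent with the paper's setup.
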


	\begin{proof}
		The fact that a path takes a pattern entirely contained in a $k$-box only depends on the passage times of the edges of this $k$-box. Since $\gamma$ is a $k$-penalized path in the environment $T$, it does not take a pattern entirely contained in a $k$-box in this environment. Since the edges of the $k$-boxes different from $\Bts$ have not been modified, $\gamma$ does not take a pattern entirely contained in a $k$-box different from $\Bts$ in the environment $\Tu$. 
		To conclude, it remains to prove that $\gamma$ does not take a pattern entirely contained in $\Bts$ in the environment $\Tu$. By \ref{Cofde}, there is only one pattern entirely contained in $\Bts$ which is the one centered in $\cC(T)$. By Lemma \ref{l: Properties of the forbidden zone.}, $\gamma$ does not take any edge of the forbidden zone and the pattern centered in $\cC(T)$ is entirely contained in the forbidden zone, which gives the result.
	\end{proof}

	\begin{lemma}\label{Lemme pour dire que gamma est une k-géodésique dans l'environnment modifié.}
		Every $k$-penalized path from $0$ to $x$ in the environment $\Tu$ has a passage time for $\Tu$ greater than or equal to the passage time of $\gamma$ for $\Tu$.
	\end{lemma}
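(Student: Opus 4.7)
The plan is to establish the inequality by comparing the two environments via the key conservation property~\ref{Cofi} (no $k$-penalized path can save more time than $\gamma$ during the modification), together with the optimality of $\gamma$ in the original environment~$T$.

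Let $\tpi$ be an arbitrary $k$-penalized path from $0$ to $x$ in the environment $\Tu$. First I would dispose of the trivial case $\Tu(\tpi) = \infty$: since $\Tu(\gamma) \le T(\gamma)$ by~\ref{Cofa} and $T(\gamma) < \infty$ (as $\gamma$ is the selected $k$-geodesic in $T$, which has finite passage time on the event $\good$), the inequality $\Tu(\gamma) \le \Tu(\tpi)$ holds automatically.

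Next I would treat the case $\Tu(\tpi) < \infty$. By Lemma~\ref{Lemme pour dire qu'un chemin $k$-pénalié dans Tu est quand pénalisé dans T.}, such a $\tpi$ is also a $k$-penalized path in the environment $T$. Since $\gamma$ is a $k$-geodesic from $0$ to $x$ in the environment $T$ (being the selected $k$-geodesic), it is optimal among all $k$-penalized paths from $0$ to $x$ in~$T$, so
\begin{equation*}
T(\gamma) \le T(\tpi).
\end{equation*}
On the other hand, property~\ref{Cofi} of Section~\ref{Section conséquences de la modification.} applied to $\tpi$ yields
\begin{equation*}
T(\tpi) - \Tu(\tpi) \le T(\gamma) - \Tu(\gamma).
\end{equation*}
Rearranging this last inequality and combining it with $T(\gamma) \le T(\tpi)$ gives
\begin{equation*}
\Tu(\gamma) - \Tu(\tpi) \le T(\gamma) - T(\tpi) \le 0,
\end{equation*}
which is the desired conclusion $\Tu(\gamma) \le \Tu(\tpi)$.

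The main work has already been done in the earlier consequence~\ref{Cofi}; here the only subtlety is invoking it correctly, which requires $\Tu(\tpi)$ to be finite so that Lemma~\ref{Lemme pour dire qu'un chemin $k$-pénalié dans Tu est quand pénalisé dans T.} applies and $\tpi$ can be compared against $\gamma$ in the $k$-penalized optimality of $\gamma$ in the environment $T$. I expect no obstacle beyond this case split.
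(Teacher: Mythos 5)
Your proof is correct and follows essentially the same route as the paper: invoke Lemma~\ref{Lemme pour dire qu'un chemin $k$-pénalié dans Tu est quand pénalisé dans T.} to transfer the $k$-penalized property back to $T$, use optimality of $\gamma$ in $T$, and close with property~\ref{Cofi}. Your explicit treatment of the case $\Tu(\tpi) = \infty$ is a small tidiness improvement over the paper, which implicitly assumes finiteness when applying the transfer lemma and~\ref{Cofi}.
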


	\begin{proof}
		Let $\ogu$ be a $k$-penalized path in the environment $\Tu$.
		By Lemma \ref{Lemme pour dire qu'un chemin $k$-pénalié dans Tu est quand pénalisé dans T.}, $\ogu$ is also a $k$-penalized path in the environment $T$. 
		Thus, since $\ogu$ is a $k$-penalized path in the environment $T$ and since $\gamma$ is a $k$-geodesic, we get $T(\gamma) \le T(\ogu)$. Hence, using \ref{Cofi} in Section \ref{Section conséquences de la modification.}, \[\Tu(\ogu) \ge \Tu(\gamma) + \underbrace{T(\ogu) - T(\gamma)}_{\ge 0} \ge \Tu(\gamma).\]
	\end{proof}

	\begin{lemma}
		If a path from $0$ to $x$ is a $k$-geodesic in the environment $\Tu$, it is also a $k$-geodesic in the environment $T$. 
	\end{lemma}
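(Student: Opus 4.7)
The plan is to chain the results already established in Section \ref{Section conséquences de la modification.} together with the previous two lemmas of Section \ref{Sous-section fin de la preuve du lemme.}. Fix a $k$-geodesic $\tpi$ from $0$ to $x$ in the environment $\Tu$.

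First I would show that $\Tu(\tpi) < \infty$. By Lemma \ref{Lemme pour dire que gamma est un chemin k-pénalisé dans l'environnement Tu.}, $\gamma$ is a $k$-penalized path in $\Tu$, and by property \ref{Cofa} we have $\Tu(\gamma) \le T(\gamma) < \infty$. Since $\tpi$ is a $k$-geodesic in $\Tu$, it minimizes the passage time among $k$-penalized paths in $\Tu$, so $\Tu(\tpi) \le \Tu(\gamma) < \infty$.

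Next, having established that $\tpi$ is a $k$-penalized path in $\Tu$ with finite passage time in $\Tu$, Lemma \ref{Lemme pour dire qu'un chemin $k$-pénalié dans Tu est quand pénalisé dans T.} yields that $\tpi$ is also a $k$-penalized path in the environment $T$. The key step is then to apply property \ref{Cofi} of Section \ref{Section conséquences de la modification.}, which gives
\[
T(\tpi) - \Tu(\tpi) \le T(\gamma) - \Tu(\gamma).
\]
Combining with $\Tu(\tpi) \le \Tu(\gamma)$ proved above, we obtain $T(\tpi) \le T(\gamma)$.

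Finally, since $\gamma$ is a $k$-geodesic in $T$ and $\tpi$ is a $k$-penalized path from $0$ to $x$ in $T$, we have $T(\gamma) \le T(\tpi)$. Hence $T(\tpi) = T(\gamma) = t_k(0,x)$, which is precisely the statement that $\tpi$ is a $k$-geodesic in the environment $T$. I expect no serious obstacle here: all the real work is packaged in property \ref{Cofi} (which controls how much time a $k$-penalized path can save through the resampling) and in Lemma \ref{Lemme pour dire qu'un chemin $k$-pénalié dans Tu est quand pénalisé dans T.} (which transfers the penalization property from $\Tu$ back to $T$); the present lemma just assembles them.
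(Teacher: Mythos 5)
Your proof is correct and follows essentially the same route as the paper: establish that $\gamma$ is $k$-penalized in $\Tu$ with finite passage time, transfer the penalization of $\tpi$ back to $T$ via the earlier lemma, and then use property \ref{Cofi} together with $\Tu(\tpi)\le\Tu(\gamma)$ to conclude $T(\tpi)\le T(\gamma)$. The only cosmetic difference is that the paper invokes the equality $\Tu(\ogu)=\Tu(\gamma)$ where you use only the inequality, which suffices.
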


	\begin{proof}
		Let $\ogu$ be a $k$-geodesic in the environment $\Tu$ from $0$ to $x$. By Lemma \ref{Lemme pour dire que gamma est un chemin k-pénalisé dans l'environnement Tu.}, $\gamma$ is a $k$-penalized path in the environment $\Tu$. Thus, since $\Tu(\gamma)$ is finite, $\Tu(\ogu)$ is also finite and by Lemma \ref{Lemme pour dire qu'un chemin $k$-pénalié dans Tu est quand pénalisé dans T.}, $\ogu$ is a $k$-penalized path in the environment $T$. Moreover, using \ref{Cofi} in Section \ref{Section conséquences de la modification.}, we get 
		\[T(\ogu) \le T(\gamma) + \Tu(\ogu) - \Tu(\gamma).\] Since $\ogu$ is a $k$-geodesic, by  Lemma \ref{Lemme pour dire que gamma est un chemin k-pénalisé dans l'environnement Tu.} and Lemma \ref{Lemme pour dire que gamma est une k-géodésique dans l'environnment modifié.}, $\Tu(\ogu) = \Tu(\gamma)$. So $T(\ogu) \le T(\gamma)$ and $\ogu$ is a $k$-geodesic in the environment $T$.  
	\end{proof}

	\begin{lemma}
		If a $k$-geodesic from $0$ to $x$ in the environment $\Tu$ has at least $Q_n$ boxes in its $S^k$-sequence, it has also at least $Q_n$ boxes in its $S^k$-sequence in the environment $T$.
	\end{lemma}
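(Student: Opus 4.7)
The plan is to take an arbitrary $k$-geodesic $\ogu$ in the environment $\Tu$ whose $S^k$-sequence has at least $Q_n$ boxes, and to argue that every box appearing in this sequence also appears in the $S^k$-sequence of $\ogu$ computed in the environment $T$. First I would invoke the immediately preceding lemma to conclude that $\ogu$ is also a $k$-geodesic in $T$, so that its $S^k$-sequence in $T$ is well-defined. The crucial (and trivial) observation is that whether $\ogu$ crosses a given $k$-box depends only on the sequence of vertices visited by $\ogu$, not on the passage times, so $\ogu$ crosses exactly the same $k$-boxes in both environments; hence the two $S^k$-sequences differ only through the notion of being successful.

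Next I would fix an arbitrary $k$-box $B'$ belonging to the $S^k$-sequence of $\ogu$ in $\Tu$, and show that $B'$ also belongs to the $S^k$-sequence of $\ogu$ in $T$. If $B' \ne \Bts$, no passage time inside $B'$ has been altered between $T$ and $\Tu$, so properties \ref{P1} and \ref{P4} apply: $B'$ is typical in $T$ iff it is typical in $\Tu$, and $\ogu$ has a shortcut in $B'$ in $T$ iff it has one in $\Tu$. Either condition transfers, so $B'$ is successful for $\ogu$ in $T$. If instead $B' = \Bts$, the very fact that $\Bts$ is successful for $\ogu$ in $\Tu$ forces $\ogu$ to cross $\Bts$; and by hypothesis \ref{H3}, $\Bts$ is already a typical box in $T$, so $\Bts$ is automatically successful for $\ogu$ in $T$ without any further work.

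Combining both cases, every element of the $S^k$-sequence of $\ogu$ in $\Tu$ is an element of the $S^k$-sequence of $\ogu$ in $T$, so the latter has cardinality at least $Q_n$, which is exactly the conclusion sought. I do not anticipate a serious obstacle: the statement is essentially a bookkeeping consequence of properties \ref{P1}, \ref{P4} and \ref{H3}, together with the environment-independence of "$\ogu$ crosses a $k$-box"; the only subtle point that must not be overlooked is the special treatment of $\Bts$, which could fail to be typical in $\Tu$ but is guaranteed to be typical in $T$.
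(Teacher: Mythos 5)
Your argument follows the paper's proof almost verbatim: transfer the boxes other than $\Bts$ via the stability properties, and handle $\Bts$ separately using \ref{H3}. The only difference is that you cite both \ref{P1} and \ref{P4} where the paper cites only \ref{P4}, which is in fact more careful since transferring the notion of ``successful'' requires both the typical-box and the shortcut properties to be stable.
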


	\begin{proof}
		Let $\ogu$ be a $k$-geodesic in the environment $\Tu$ from $0$ to $x$ having at least $Q_n$ boxes in its $S^k$-sequence. Using \ref{P4} and by the construction of the $S^k$-sequence, each box different from $\Bts$ belonging to the $S^k$-sequence of $\ogu$ in the environment $\Tu$ belongs to its $S^k$-sequence in the environment $T$.  If $\Bts$ belongs to its $S^k$-sequence in the environment $\Tu$, since $\Bts$ is a typical box in the environment $T$, then $\Bts$ belongs to its $S^k$-sequence in the environment $T$, which allows us to conclude. 
	\end{proof}

	\section{Extension of the van den Berg-Kesten comparison principle}\label{Section preuve VdB-K.}
	
	This section is dedicated to the proof of Theorem \ref{Théorème VdB-K.}.
	Let $\L$ and $\Lt$ be two distributions taking values in $[0,\infty]$ such that:
	\begin{enumerate}[label=($\mathcal{H}$\arabic*)]
		\item $\L$ is useful,
		\item $\L([0,\infty)) > p_c$ and $\Lt([0,\infty)) > p_c$,
		\item $\L \ne \Lt$,
		\item there exists a couple of random variables $(\tau, \tilde{\tau})$ on some probability space, with marginal distributions $\L$ and $\Lt$, respectively, and satisfying 
		\begin{equation}
			\E [\tilde{\tau} | \tau] \le \tau. \label{eq: preuve extension de VdB-K-1.}
		\end{equation}
	\end{enumerate}

	In what follows, $(\tau,\tilde{\tau})$ is a couple of random variables with marginal distributions $\L$ and $\Lt$, and satisfying \eqref{eq: preuve extension de VdB-K-1.}. Such a couple exists by \ref{enum: VdB-K 3.}. Note that by \eqref{eq: preuve extension de VdB-K-1.}, we have 
	\begin{equation}
		\{\tau<\infty\} \subset \{\tilde{\tau}<\infty\} \text{ a.s.} \label{eq: section 3 eq1.}
	\end{equation}
	Then, we consider a family $(T,\tilde{T})=\{(T(e),\tilde{T}(e)) \, : \, e \in \cE\}$ of i.i.d.\ random variables defined on the same probability space such that for all $e \in \cE$, $(T(e),\tilde{T}(e))$ has the same distribution as $(\tau,\tilde{\tau})$. 
	
	The proof of Theorem \ref{Théorème VdB-K.} is an application of Theorem \ref{Théorème général.}. We begin by defining a valid pattern in Section \ref{Sous-section VdB-K définition du motif valable.} and then, we apply Theorem \ref{Théorème général.} with this pattern in Section \ref{Sous-section vdbk fin de la preuve.}.
	 
	\subsection{Definition of the valid pattern}\label{Sous-section VdB-K définition du motif valable.}
	
	The fact that a pattern is valid or not depends on the distribution of the passage times of the environment. Here, we use Theorem \ref{Théorème général.} only in the environment $T$. Thus, when we define a pattern $\mathfrak{P}=(\Lambda,u^\Lambda,v^\Lambda,\cA^\Lambda)$ below, we consider that the event $\cA^\Lambda$ only depends on the family $(T(e))_{e \in \Lambda}$. %Furthermore, the three conditions in the definition of a valid pattern remains the same in this section since we want to define a valid pattern in the environment $T$ and since $\cA^\Lambda$ only depends on this environment.
	
	Now, for a valid pattern $\mathfrak{P}=(\Lambda,u^\Lambda,v^\Lambda,\cA^\Lambda)$, denote by $\Pi^\mathfrak{P}$ the set of all self-avoiding paths going from $\ulm$ to $\vlm$ and which are contained in $\Lambda$.
	Denote by $\cG$ the $\sigma$-field generated by the family $(T(e))_{e\in\cE}$. 
	Section \ref{Sous-section VdB-K définition du motif valable.} is devoted to the proof of the following lemma.
	
	\begin{lemma}\label{l: 3 preuve vdbk lemme fondamental des moitfs}
		There exist a valid pattern $\mathfrak{P}=(\Lambda,u^\Lambda,v^\Lambda,\cA^\Lambda)$ and a constant $\eta>0$ such that on the event $\cA^\Lambda$,
		\begin{equation}
			\E \left[ \min_{\pi \in \Pi^\mathfrak{P}} \tilde{T}(\pi) | \cG \right] < \min_{\pi \in \Pi^\mathfrak{P}} T(\pi) - \eta.\label{eq: 1 preuve vdbk inégalité avec les minima des espérances conditionnelles.}
		\end{equation}
	\end{lemma}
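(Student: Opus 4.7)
The plan is to exploit the hypothesis $\L \ne \Lt$ through a dichotomy on the coupling's expected slack. Set $R := \tau - \E[\tilde{\tau} \mid \tau] \ge 0$ a.s.\ by \ref{enum: VdB-K 3.}. Either (Case 1) there exist $\eta_0 > 0$ and a Borel set $B \subset [0, \infty)$ with $\P(\tau \in B) > 0$ and $R \ge \eta_0$ on $\{\tau \in B\}$; or (Case 2) $R = 0$ a.s.\ on $\{\tau < \infty\}$. In each case I construct a valid pattern achieving the conclusion.

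In Case 1 I take the single-edge pattern $\Lambda = \{0, \epsilon_1\}$, $\ulm = 0$, $\vlm = \epsilon_1$, with the unique edge $e = \{\ulm, \vlm\}$ and $\cA^\Lambda = \{T(e) \in B\}$. The pattern is valid: $\cA^\Lambda$ has positive probability, it forces $T(e) < \infty$ (so a finite-passage-time path exists in $\Lambda$), and $\ulm, \vlm$ lie on distinct faces. Since $\Pi^\mathfrak{P}$ contains only the one-edge path and since the pairs $(T(e'), \tilde{T}(e'))_{e' \in \cE}$ are i.i.d., on $\cA^\Lambda$ I have $\E[\tilde{T}(e) \mid \cG] = \E[\tilde{\tau} \mid \tau = T(e)] \le T(e) - \eta_0$, giving the lemma with $\eta = \eta_0/2$.

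In Case 2 the assumption $\L \ne \Lt$ forces one of two sub-cases: (2a) $\mathrm{Var}[\tilde{\tau} \mid \tau] > 0$ on a positive-$\L$-measure subset of $[0, \infty)$, or (2b) $\tilde{\tau} = \tau$ a.s.\ on $\{\tau < \infty\}$ together with $\L(\{\infty\}) > \Lt(\{\infty\})$. For (2a) I take the unit-square pattern $\Lambda = \{0, \epsilon_1, \epsilon_2, \epsilon_1 + \epsilon_2\}$, $\ulm = 0$, $\vlm = \epsilon_1 + \epsilon_2$, which admits exactly two edge-disjoint length-$2$ paths $\pi_1, \pi_2$ through $\epsilon_1$ and $\epsilon_2$. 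A countable covering argument on the conditional laws of $\tilde{\tau}$ given $\tau$ produces a Borel set $B \subset [0, \infty)$ of diameter $\le \varepsilon$ with $\P(\tau \in B) > 0$, rationals $r < s$, and constants $p, q > 0$, such that $\P(\tilde{\tau} \le r \mid \tau = t) \ge p$ and $\P(\tilde{\tau} \ge s \mid \tau = t) \ge q$ for every $t \in B$. Setting $\cA^\Lambda = \{T(e) \in B \text{ for all four edges } e \text{ of } \Lambda\}$ and using $\min(x, y) = (x + y - |x - y|)/2$ together with the conditional independence of $\tilde{T}(\pi_1)$ and $\tilde{T}(\pi_2)$ given $\cG$ (edge-disjointness plus the i.i.d.\ coupling), I get
\[
\E\bigl[\min(\tilde{T}(\pi_1), \tilde{T}(\pi_2)) \bigm| \cG\bigr] \le \min(T(\pi_1), T(\pi_2)) + \varepsilon - 2 p^2 q^2 (s - r),
\]
so $\eta$ is extracted by taking $\varepsilon$ small relative to $p^2 q^2 (s-r)$. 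For (2b) I use instead a longer $2 \times k$ rectangle with a short path $\pi_1$ whose first edge $e_1$ is required by $\cA^\Lambda$ to satisfy $T(e_1) = \infty$ and a detour $\pi_2$ whose finite cost exceeds any potential finite value of $\tilde{\tau}$ conditional on $\tau = \infty$ by a uniform margin; the savings arise with conditional probability at least $\P(\tilde{\tau} < \infty \mid \tau = \infty) > 0$, which is positive precisely because $\L(\{\infty\}) > \Lt(\{\infty\})$.

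The main obstacle is Case 2. In (2a) the delicate point is producing $B$ simultaneously narrow (to control $|T(\pi_1) - T(\pi_2)|$), carrying positive $\L$-mass, and preserving a uniform spread $s - r > 0$ in the disintegration of $\tilde{\tau} \mid \tau$; this is handled by the countable covering on the conditional laws. In (2b) the difficulty is keeping the pattern valid in spite of an infinite edge (a finite-passage-time backup path must exist in $\Lambda$) and tuning $k$ together with the prescribed ranges of the remaining passage times so that the detour cost outpaces the finite tail of $\tilde{\tau} \mid \tau = \infty$; the freedom to enlarge $\Lambda$ provides the necessary room.
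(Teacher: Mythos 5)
Your proposal is correct and follows essentially the same route as the paper: the same three-way case analysis (positive expected slack, zero slack but positive conditional spread, and the degenerate case forcing $\P(\tilde\tau<\infty,\tau=\infty)>0$) realized by the same three patterns (a single edge, a unit square with two parallel two-edge paths, and a long detour around an edge forced to be infinite in $T$), with the same conditional-expectation estimates. The only differences are cosmetic: the paper peels off the $\P(\tilde\tau<\infty,\,\tau=\infty)>0$ case first rather than as a residual sub-case, and it packages your countable covering over rational thresholds as a uniform kernel lemma before shrinking to a narrow interval.
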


	To prove Lemma \ref{l: 3 preuve vdbk lemme fondamental des moitfs}, there are three different cases to be considered. Noting that, if $\P(\tilde{\tau}<\infty \text{ and } \tau=\infty)=0$, using \eqref{eq: section 3 eq1.}, we get \[\{\tau<\infty\} = \{\tilde{\tau}<\infty\} \text{ a.s.},\] these three cases can be written as follows:
	\begin{itemize}
		\item $\P(\tilde{\tau}<\infty \text{ and } \tau=\infty)>0$,
		\item $\{\tau<\infty\} = \{\tilde{\tau}<\infty\} \text{ a.s.}$ and $\P(\E[\tilde{\tau}|\tau]<\tau)>0$,
		\item $\{\tau<\infty\} = \{\tilde{\tau}<\infty\} \text{ a.s.}$ and $\P(\E[\tilde{\tau}|\tau]=\tau)=1$.
	\end{itemize}
	The most technical case is the third one.
	
	\subsubsection{First case: when $\P(\tilde{\tau}<\infty \text{ and } \tau=\infty)>0$}
	
	\begin{proof}[Proof of Lemma \ref{l: 3 preuve vdbk lemme fondamental des moitfs} in the first case]
		\,

		Assume that $\P(\tilde{\tau}<\infty \text{ and } \tau=\infty)>0$. 
		Let $\nu \in (0,\infty)$ such that 
		\begin{equation}
			\L([\nu,\infty))>0.\label{eq: premier cas vdbk}
		\end{equation} Such a constant exists since we have $\L(0)+\L(\infty)<1$ since $\L$ is useful by Assumption \ref{enum: VdB-K 4.} and $\L([0,\infty))>p_c$ by Assumption \ref{enum: VdB-K 1.}.
		Let $M \in [0,\infty)$ such that 
		\begin{equation}
			\P(\tilde{\tau}<M \text{ and } \tau=\infty)>0.\label{eq: 1 preuve vdbk cas 1.}
		\end{equation} Let $\eta_0>0$. Fix 
		\begin{equation}
			m > \frac{M+\eta_0}{2 \nu}.\label{eq: 2 preuve vdbk cas 1.}
		\end{equation}
		Then, define the pattern $\mathfrak{P}=(\Lambda,u^\Lambda,v^\Lambda,\cA^\Lambda)$ where $\Lambda = \{0,1\} \times \{0,\dots,m\} \times \prod_{j=3}^d \{0\}$, $\ulm=(0,\dots,0)$, $\vlm=(1,0,\dots,0)$ and $\cA^\Lambda$ is defined as follows. Denote by $\pi^{\text{fin}}$ the path going from $\ulm$ to $m\epsilon_2$ by $m$ steps in the direction $\epsilon_2$, then to $\epsilon_1+m\epsilon_2$ by one step in the direction $\epsilon_1$ and finally to $\vlm_2$ by $m$ steps in the direction $\epsilon_2$. The event $\cA^\Lambda$ is the event on which for every $e \in \pi^\text{fin}$, $\nu \le T(e) < \infty$ and for every edge $e \in \Lambda$ but not in $\pi^\text{fin}$, $T(e) = \infty$. This pattern is valid since the event $\cA^\Lambda$ has a positive probability by \eqref{eq: premier cas vdbk} and \eqref{eq: 1 preuve vdbk cas 1.}, and since the path $\pi^\text{fin}$ is a path between $\ulm$ and $\vlm$ with a finite passage time in the environment $T$ when the event $\cA^\Lambda$ occurs.
		
		Now, denote \[\beta = \P \left( \tilde{\tau} < M | \tau = \infty \right).\] By \eqref{eq: 1 preuve vdbk cas 1.}, $\beta>0$. Then, on the event $\cA^\Lambda$, we have
		\begin{equation}
			\E \left[ \min_{\pi \in \Pi^\mathfrak{P}} \tilde{T}(\pi) | \cG \right] \le T(\pi^\text{fin}) - \beta (T(\pi^\text{fin})-M).\label{eq: corrections vdbk équation avec le beta.}
		\end{equation}
		Indeed, denote by $e^{\infty}$ the edge $\{\ulm,\vlm\}$ and by $\pi^\infty$ the path going from $\ulm$ to $\vlm$ by taking only this edge. Then, on the event $\cA^\Lambda$, since $\cA^\Lambda \subset \{T(e^\infty) = \infty\}$,
		\begin{align}
			\E \left[ \min_{\pi \in \Pi^\mathfrak{P}} \tilde{T}(\pi) | \cG \right] & \le \E \left[ \min(\tilde{T}(\pi^\text{fin}), \, \tilde{T}(\pi^\infty)) | \cG \right] \nonumber \\
			%& \le \E \left[ \min(\tilde{T}(\pi^\text{fin}), \, \tilde{T}(\pi^\infty)) | \sigma(T(e),\, e \in \mathfrak{P}) \right] \\
			& \le \E \left[M \1_{\{\tilde{T}(e^\infty)<M\}} + \tilde{T}(\pi^\text{fin}) \1_{\{\tilde{T}(e^\infty) \ge M\}} | \cG \right] \nonumber \\
			%& \le M \P\left(\tilde{T}(e^\infty)<M|T(e^\infty) \right) + \sum_{e \in \pi^\text{fin}} \E \left[ \tilde{T}(e) \1_{\tilde{T}(e^\infty) \ge M} | \cG \right]. 
			& \le M \beta + \sum_{e \in \pi^\text{fin}} \E \left[ \tilde{T}(e) \1_{\tilde{T}(e^\infty) \ge M} | \cG \right]. \label{eq: vdbk motif 1 correction 1.}
		\end{align}
		But, for every $e \in \pi^\text{fin}$,
		\begin{align}
			\E \left[ \tilde{T}(e) \1_{\tilde{T}(e^\infty) \ge M} | \cG \right] & = \E \left[ \tilde{T}(e) | T(e) \right] \P\left(\tilde{T}(e^\infty) \ge M | T(e^\infty)\right) \nonumber \\
			& \le \E \left[ \tilde{T}(e) | T(e) \right] (1-\beta) \text{ since $\cA^\Lambda \subset \{T(e^\infty) = \infty\}$,} \nonumber \\
			& \le T(e) (1-\beta),\label{eq: vdbk motif 1 correction 2.}
		\end{align}
		since $(T(e),\tilde{T}(e))$ has the same distribution as $(\tau,\tilde{\tau})$ which satisfies \eqref{eq: preuve extension de VdB-K-1.}.
		Thus, combining \eqref{eq: vdbk motif 1 correction 1.} and \eqref{eq: vdbk motif 1 correction 2.}, we get, on the event $\cA^\Lambda$, 
		\begin{align*}
			\E \left[ \min_{\pi \in \Pi^\mathfrak{P}} \tilde{T}(\pi) | \cG \right] & \le M \beta + (1- \beta)T(\pi^\text{fin}) = T(\pi^\text{fin}) - \beta (T(\pi^\text{fin})-M),
		\end{align*}
		and \eqref{eq: corrections vdbk équation avec le beta.} is proved.
		
		Now, by the definition of the pattern, on the one hand, $\displaystyle T(\pi^\text{fin}) = \min_{\pi \in \Pi^\mathfrak{P}} T(\pi)$ and on the other hand, $T(\pi^\text{fin}) \ge 2m\nu$. This gives, using \eqref{eq: 2 preuve vdbk cas 1.}, 
		\[T(\pi^\text{fin}) - M > \eta_0.\]
		Hence, 
		\[\E \left[ \min_{\pi \in \Pi^\mathfrak{P}} \tilde{T}(\pi) | \cG \right] < \min_{\pi \in \Pi^\mathfrak{P}} T(\pi) - \beta \eta_0,\]
		which allows us to conclude since $\beta \eta_0 > 0$.
	\end{proof}
	
	\subsubsection{Second case: when $\{\tau<\infty\} = \{\tilde{\tau}<\infty\} \text{ a.s.}$ and $\P(\E[\tilde{\tau}|\tau]<\tau)>0$}
	
	\begin{proof}[Proof of Lemma \ref{l: 3 preuve vdbk lemme fondamental des moitfs} in the second case]
		\,
		
		Assume that $\P(\E[\tilde{\tau}|\tau]<\tau)>0$ and that $\{\tau<\infty\} = \{\tilde{\tau}<\infty\}$ a.s.
		Then there exist $\eta>0$ and a Borel set $I \subset [0,\infty)$ such that $\P(\tau \in I) > 0$ and on the event $\{\tau \in I\}$, 
		\begin{equation}
			\E[\tilde{\tau}|\tau] < \tau - \eta. \label{eq: section 3 eq2.}
		\end{equation}
		Now, define the pattern $\mathfrak{P}=(\{u^\Lambda,v^\Lambda\},u^\Lambda,v^\Lambda,\cA^\Lambda)$ where $u^\Lambda=(0,\dots,0)$, $v^\Lambda=(1,0,\dots,0)$ and $\cA^\Lambda$ is the event on which the passage time of the only edge of the pattern, denoted by $e$, belongs to $I$. 
		Then, this pattern is valid since the event $\cA^\Lambda$ has a positive probability since $\P(\tau \in I)>0$ and since the passage time of the path $(\ulm,\vlm)$ in the environment $T$ is finite when $\cA^\Lambda$ occurs since $I \subset [0,\infty)$. Furthermore, on the event $\cA^\Lambda$,
		\begin{align*}
			\E \left[ \min_{\pi \in \Pi^\mathfrak{P}} \tilde{T}(\pi) | \cG \right] & = \E \left[\tilde{T}(e) | \cG\right] = \E \left[\tilde{T}(e) | T(e) \right] < T(e) - \eta \text{ by \eqref{eq: section 3 eq2.},} \\
			& = \min_{\pi \in \Pi^\mathfrak{P}} T(\pi) - \eta.
		\end{align*}
	\end{proof}
	
	\subsubsection{Third case: when $\{\tau<\infty\} = \{\tilde{\tau}<\infty\} \text{ a.s.}$ and $\P(\E[\tilde{\tau}|\tau]=\tau)=1$}
	
	Assume that 
	\begin{equation}
		\P(\E[\tilde{\tau}|\tau]=\tau)=1, \label{eq: Correction vdbk hypothèse dans la preuve du troisième motif.}
	\end{equation} and that
	\begin{equation}
		\{\tau<\infty\} = \{\tilde{\tau}<\infty\} \text{ a.s.} \label{eq: preuve extension de VdB-K-3.}
	\end{equation}
	
	\begin{lemma}\label{l: lemme préliminaire VdB-K pour avoir ensuite le lemme avec l'espérance conditionnelle.}
		In this case, there exist $\beta>0$ and $\delta>0$ such that
		\begin{equation}
			\P \biggl( \tau < \infty \text{ and } \P(\tilde{\tau} \le \tau - 2 \delta |\tau) \ge \beta \text{ and } \P(\tilde{\tau}\ge\tau|\tau) \ge \beta \biggr) > 0.
			\label{eq: section 3 eq3.}
		\end{equation}
	\end{lemma}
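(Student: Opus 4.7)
The plan is to derive the statement from the assumption $\L \ne \Lt$ together with the two structural hypotheses $\P(\E[\tilde{\tau}|\tau]=\tau)=1$ and $\{\tau<\infty\}=\{\tilde{\tau}<\infty\}$ a.s. Work with a regular conditional distribution $(\mu_t)_{t \in [0,\infty]}$ of $\tilde{\tau}$ given $\tau$, so that $\int s\,d\mu_t(s) = t$ for a.e.\ $t \in [0,\infty)$ and $\mu_\infty = \delta_\infty$. The whole proof reduces to showing that on a set of positive probability the conditional law $\mu_\tau$ is non-trivially spread on both sides of $\tau$, and then extracting uniform quantitative bounds.

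First I would show that the event $F = \{\tau<\infty, \, \mu_\tau \ne \delta_\tau\}$ has positive probability. Suppose not: then conditionally on $\{\tau<\infty\}$ one has $\tilde{\tau}=\tau$ almost surely, while on $\{\tau=\infty\}$ the equality $\{\tau<\infty\}=\{\tilde{\tau}<\infty\}$ forces $\tilde{\tau}=\infty=\tau$ almost surely. Thus $\tilde{\tau}=\tau$ a.s., giving $\L=\Lt$ and contradicting hypothesis \ref{enum: VdB-K 2.}.

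Next, on $F$ I would use the constraint $\int s\, d\mu_\tau(s)=\tau$ to show that $\mu_\tau$ must charge both $[0,\tau)$ and $(\tau,\infty)$ strictly. Indeed, if $\mu_\tau$ were supported in $[\tau,\infty)$ with mean $\tau$, then $\mu_\tau=\delta_\tau$; similarly for the other side. Hence $\mu_\tau([0,\tau)) > 0$ and $\mu_\tau((\tau,\infty)) > 0$ everywhere on $F$, which in particular yields $\mu_\tau([\tau,\infty)) > 0$.

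Finally, to upgrade pointwise positivity to uniform constants $\beta,\delta$, I would write $F$ as a countable union. Set
\[
F_{m,k} = \bigl\{ \tau<\infty,\; \mu_\tau\bigl([0,\tau-2/m]\bigr) \ge 1/k,\; \mu_\tau\bigl([\tau,\infty)\bigr) \ge 1/k \bigr\}.
\]
By monotone convergence of measure along the increasing intervals $[0,\tau-2/m] \nearrow [0,\tau)$ as $m\to\infty$ and of the thresholds $1/k\searrow 0$ as $k\to\infty$, one has $F=\bigcup_{m,k}F_{m,k}$. Since $\P(F)>0$, there exist $m,k$ with $\P(F_{m,k})>0$, and taking $\delta=1/m$, $\beta=1/k$ gives \eqref{eq: section 3 eq3.}. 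There is no genuine obstacle here; the main point to handle cleanly is the existence and measurability of the regular conditional law (using that $[0,\infty]$ is a Polish space), which makes all the manipulations above rigorous.
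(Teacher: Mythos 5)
Your proof is correct and follows essentially the same route as the paper's: both arguments work with a regular conditional distribution of $\tilde{\tau}$ given $\tau$, use $\L \ne \Lt$ to produce a positive-probability set where the conditional law is not $\delta_\tau$, and use the constraint $\E[\tilde{\tau}|\tau]=\tau$ to force that law to charge both sides of $\tau$. The only difference is organizational: you make explicit the final countable-union extraction of uniform $\beta,\delta$, which the paper leaves implicit after reducing to $\P(A\cap B)>0$.
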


	\begin{proof}
		Denote $A=\{\tau < \infty \text{ and } \P(\tilde{\tau}<\tau|\tau)>0\}$ and $B=\{\P(\tilde{\tau}\ge\tau|\tau)>0\}$. To prove the lemma, it suffices to prove that
		\begin{equation}
			\P(A \cap B)>0.\label{eq: preuve VdBK lemme avec les événements A et B.}
		\end{equation}
		Let $K$ be a transition probability kernel such that for every measurable function $\phi \, : \, [0,\infty] \times [0,\infty] \to [0,\infty]$, 
		\begin{equation}
			\E \left[ \phi(\tau,\tilde{\tau}) | \tau \right] = \int_{[0,\infty]} \phi(\tau,\tilde{t}) K(\tau,d \tilde{t}).\label{eq: Cinlar 1.}
		\end{equation}
		The existence of such a $K$ is given by Theorem 2.19 in Chapter 4 in \cite{Cinlar}.
	
		First, let us prove that $\P(B) = 1$. We have
		\begin{align*}
			B^c & = \left\{ \E \left[ \1_{\tilde{\tau} < \tau} | \tau \right] = 1 \right\} = \left\{ K(\tau,[0,\tau)) = 1 \right\} \text{ using \eqref{eq: Cinlar 1.},} \\
			& = \left\{ K(\tau,[0,\tau)) = 1 \text{ and } \tau < \infty \right\} \text{ using \eqref{eq: preuve extension de VdB-K-3.}} \\
			& \subset \left\{ \int_{[0,\infty]} \tilde{t} K(\tau,d\tilde{t}) < \tau \right\} \\
			& = \left\{ \E \left[ \tilde{\tau} | \tau \right] < \tau \right\}.
		\end{align*}
		Now, since $\P ( \E \left[ \tilde{\tau} | \tau \right] < \tau ) = 0$ by \eqref{eq: Correction vdbk hypothèse dans la preuve du troisième motif.}, we get $\P(B^c) = 0$ and thus $\P(B) = 1$.
		
		So, to get \eqref{eq: preuve VdBK lemme avec les événements A et B.}, it remains to prove that $\P(A)>0$. To this aim, we shall prove that 
		\begin{equation}
			A^c = \{\tau = \tilde{\tau}\} \text{ a.s.,}\label{eq: Correction vdbk preuve du troisième motif 2.}
		\end{equation}
		which leads to $\P(A^c) < 1$ by \ref{enum: VdB-K 2.}, and thus $\P(A)>0$. To prove \eqref{eq: Correction vdbk preuve du troisième motif 2.}, observe that
		\begin{align*}
			A^c & = \{\tau = \infty \text{ or } \P(\tilde{\tau} < \tau | \tau) = 0\} \\
			& = \{\tau = \infty\} \cup \{\tau < \infty \text{ and } \P (\tilde{\tau} < \tau | \tau) = 0\}, \\
		%\end{align*}
		%where $A_1 = \{\tau < \infty \text{ and } \P (\tilde{\tau} < \tau | \tau) = 0\}$. 
		%However,
		%\begin{align*}
			& = \{\tau = \infty\} \cup \{\tau < \infty \text{ and } K(\tau,[\tau,\infty])=1\} \text{ using \eqref{eq: Cinlar 1.},} \\
			& = \{\tau = \infty\} \cup \{\tau < \infty \text{ and } K(\tau,\{\tau\})=1\},
		\end{align*}
		since \[\{\tau < \infty, \, K(\tau,[\tau,\infty]) = 1 \text{ and } K(\tau,(\tau,\infty])>0\} \subset \{\E[\tilde{\tau} | \tau] > \tau\}\]
		and $\P \left( \E[\tilde{\tau} | \tau] > \tau \right) = 0$ by \eqref{eq: Correction vdbk hypothèse dans la preuve du troisième motif.}.
		Thus, using again \eqref{eq: Cinlar 1.}, we get
		\begin{align*}
			A^c & = \{\tau = \infty\} \cup \{\tau < \infty \text{ and } \P(\tilde{\tau} = \tau | \tau) = 1\} \\
			& = \{\tau = \infty\} \cup \{\tau < \infty \text{ and } \tilde{\tau} = \tau\} \text{ a.s.}
		\end{align*}
		Now, since $\{\tau = \infty\} = \{\tau = \tilde{\tau} = \infty\}$ a.s.\ by \eqref{eq: preuve extension de VdB-K-3.}, we get \eqref{eq: Correction vdbk preuve du troisième motif 2.}.
		Hence, \eqref{eq: preuve VdBK lemme avec les événements A et B.} holds and the lemma is proved.
	\end{proof}

	\begin{lemma}\label{l: lemme VdB-K avec le min des espérances conditionnelles.}
		In this case, there exists a bounded Borel set $I \subset (0,\infty)$ and $\eta > 0$ such that 
		\begin{itemize}
			\item $\P(\tau \in I) > 0$,
			\item and \[\E[\min(\tilde{\tau}_1+\tilde{\tau}_2,\tilde{\tau}_3+\tilde{\tau}_4)|\cF] < \min(\tau_1+\tau_2,\tau_3+\tau_4) - \eta\] on the event $\cI = \{\tau_1,\dots,\tau_4 \in I\}$, where $(\tau_1,\tilde{\tau}_1),\dots,(\tau_4,\tilde{\tau}_4)$ are independent copies of $(\tau,\tilde{\tau})$ and $\cF = \sigma(\tau_1,\dots,\tau_4)$.
		\end{itemize}
	\end{lemma}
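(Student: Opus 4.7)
The plan is to combine Lemma~\ref{l: lemme préliminaire VdB-K pour avoir ensuite le lemme avec l'espérance conditionnelle.}, which provides a positive-probability set of ``good'' values of $\tau$ for which $\tilde\tau$ has enough variability around $\tau$, with the identity $\min(X,Y)=\tfrac{1}{2}(X+Y-|X-Y|)$ and the relation $\E[\tilde\tau|\tau]=\tau$ given by \eqref{eq: Correction vdbk hypothèse dans la preuve du troisième motif.}. Together these convert the desired strict inequality into a strict lower bound on a conditional expectation of $|X-Y|$, which I will then obtain by restricting to an explicit sub-event of positive probability.

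First I would fix measurable versions $\phi_1(t)=\P(\tilde\tau\le t-2\delta\mid\tau=t)$ and $\phi_2(t)=\P(\tilde\tau\ge t\mid\tau=t)$, and define $J=\{t\ge 0:\phi_1(t)\ge\beta,\ \phi_2(t)\ge\beta\}$, so that $\P(\tau\in J)>0$ by Lemma~\ref{l: lemme préliminaire VdB-K pour avoir ensuite le lemme avec l'espérance conditionnelle.}. Since $\tilde\tau\ge 0$, necessarily $J\subset[2\delta,\infty)$. For $\epsilon>0$ to be fixed later, I would partition $[2\delta,\infty)$ into intervals of length $\epsilon$ and, using countable additivity, pick one interval $[a,a+\epsilon]$ with $\P(\tau\in J\cap[a,a+\epsilon])>0$. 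Setting $I:=J\cap[a,a+\epsilon]$ then gives a bounded Borel set $I\subset(0,\infty)$ with $\P(\tau\in I)>0$, on which both $\phi_1\ge\beta$ and $\phi_2\ge\beta$.

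Next, write $X=\tilde\tau_1+\tilde\tau_2$ and $Y=\tilde\tau_3+\tilde\tau_4$. Independence of the four couples together with $\E[\tilde\tau_i|\tau_i]=\tau_i$ gives $\E[X|\cF]=\tau_1+\tau_2$ and $\E[Y|\cF]=\tau_3+\tau_4$. Using $\min(X,Y)=\tfrac{1}{2}(X+Y-|X-Y|)$ and the analogous identity for the deterministic right-hand side, the desired inequality on $\cI$ rearranges exactly to
\[
\E[|X-Y|\mid\cF]\ \ge\ |(\tau_1+\tau_2)-(\tau_3+\tau_4)|+2\eta.
\]
On $\cI$ the right-hand side is bounded above by $2\epsilon+2\eta$ since all $\tau_i\in[a,a+\epsilon]$. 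To bound the left-hand side from below, I would introduce the two disjoint events
\begin{align*}
G_1&=\{\tilde\tau_1\le\tau_1-2\delta,\ \tilde\tau_2\le\tau_2-2\delta,\ \tilde\tau_3\ge\tau_3,\ \tilde\tau_4\ge\tau_4\},\\
G_2&=\{\tilde\tau_3\le\tau_3-2\delta,\ \tilde\tau_4\le\tau_4-2\delta,\ \tilde\tau_1\ge\tau_1,\ \tilde\tau_2\ge\tau_2\}.
\end{align*}
By independence and the choice of $I$, each has conditional probability at least $\beta^4$ on $\cI$, and on $G_1\cup G_2$ a direct computation yields $|X-Y|\ge 4\delta-2\epsilon$. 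Hence $\E[|X-Y|\mid\cF]\ge 2\beta^4(4\delta-2\epsilon)$ on $\cI$.

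Finally, I would choose $\epsilon$ small enough, e.g.\ $\epsilon<\beta^4\delta$, so that $2\beta^4(4\delta-2\epsilon)-2\epsilon$ is a strictly positive constant depending only on $\beta$ and $\delta$; taking $\eta$ to be half of it concludes. The main point requiring care is the simultaneous tuning of $\epsilon$ against $\delta$ and $\beta$: the width $\epsilon$ of $I$ controls how close the two sums $\tau_1+\tau_2$ and $\tau_3+\tau_4$ must be, while $\delta$ and $\beta$ control the size of the conditional lower bound on $|X-Y|$; shrinking $I$ is harmless because $\P(\tau\in I)>0$ is preserved by countable additivity, so there is no obstruction to picking $\epsilon$ as small as needed.
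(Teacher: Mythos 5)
Your proof is correct, and it rests on exactly the same foundations as the paper's: Lemma~\ref{l: lemme préliminaire VdB-K pour avoir ensuite le lemme avec l'espérance conditionnelle.} to produce $\beta,\delta$, restriction of $\tau_1,\dots,\tau_4$ to a thin interval $I$, and independence of the four couples. Where you diverge is in the algebraic decomposition: you invoke $\min(X,Y)=\tfrac12(X+Y-|X-Y|)$ together with $\E[\tilde\tau_i|\tau_i]=\tau_i$ to reduce the claim to a strict lower bound on $\E[|X-Y|\,|\cF]$, and you obtain that bound from the two symmetric disjoint events $G_1,G_2$, yielding a contribution of $2\beta^4(4\delta-2\epsilon)$. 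The paper instead works with a single one-sided inequality
\[
\min(\tilde\tau_1+\tilde\tau_2,\tilde\tau_3+\tilde\tau_4)+2\delta\,\1_{\{\tilde\tau_3\le\tilde\tau_1-\delta,\ \tilde\tau_4\le\tilde\tau_2-\delta\}}\le\tilde\tau_1+\tilde\tau_2,
\]
takes conditional expectation, and bounds $\P(\tilde\tau_3\le\tilde\tau_1-\delta\mid\cF)\ge\beta^2$ via the inclusion $\{\tilde\tau_3\le\tau_3-2\delta,\ \tilde\tau_1\ge\tau_1\}\subset\{\tilde\tau_3\le\tilde\tau_1-\delta\}$ valid when $|\tau_1-\tau_3|<\delta$; it also picks $I$ slightly differently (a $\delta_0$-neighborhood of a point $y_0\in I_0$ rather than a cell of an $\epsilon$-partition, but that is cosmetic). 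Your symmetrized treatment is arguably cleaner and gains a harmless factor of $2$ in the lower bound; the paper's version avoids the absolute value and the accompanying integrability remark, but has to hide the symmetry inside the choice of indicator. Both require the same final tuning of the interval width against $\delta\beta^4$, which you carry out correctly (noting, as you implicitly do, that $\epsilon<\beta^4\delta\le\delta<2\delta$ keeps $4\delta-2\epsilon>0$, and that integrability of $X+Y$ given $\cF$ on $\cI$ is ensured by $\E[\tilde\tau_i\mid\tau_i]=\tau_i<\infty$ there). No gaps.
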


	\begin{proof}
		Let $(\tau_1,\tilde{\tau}_1),\dots,(\tau_4,\tilde{\tau}_4)$ be independent copies of $(\tau,\tilde{\tau})$. Denote $\cF = \sigma(\tau_1,\dots,\tau_4)$.
		Fix $\beta>0$ and $\delta>0$ given by Lemma \ref{l: lemme préliminaire VdB-K pour avoir ensuite le lemme avec l'espérance conditionnelle.}. 
		Using \eqref{eq: section 3 eq3.}, we can find a Borel set $I_0 \subset (0,\infty)$, fixed for the remaining of the proof, such that $\P(\tau \in I_0)>0$ and on the event $\{\tau \in I_0\}$, 
		\begin{equation}
			\tau < \infty \text{ and } \P(\tilde{\tau} \le \tau - 2 \delta |\tau) \ge \beta \text{ and } \P(\tilde{\tau}\ge\tau|\tau) \ge \beta.\label{eq: preuve vdbk à propos du beta.}
		\end{equation}
		Now, fix $\eta>0$ such that $\eta < 2 \delta \beta^4$ and then fix
		\begin{equation}
			0 < \delta_0 < \frac{2\delta \beta^4-\eta}{4}.\label{eq: preuve vdbk diamètre de I.}
		\end{equation}
		Note that this gives $\displaystyle \delta_0 < \frac{\delta}{2}$ as $\beta \le 1$.
		Let $y_0 \in I_0$ such that $\P(\tau \in I_0 \cap (y_0-\delta_0,y_0+\delta_0))>0$. Such a $y_0$ exists because $\P(\tau \in I_0)>0$. Set $I = I_0 \cap (y_0-\delta_0,y_0+\delta_0)$. 
		
		Then, we have \[\min(\tilde{\tau}_1+\tilde{\tau}_2,\tilde{\tau}_3+\tilde{\tau}_4) + 2 \delta \1_{\{\tilde{\tau}_3 \le \tilde{\tau}_1-\delta \text{ and } \tilde{\tau}_4 \le \tilde{\tau}_2 - \delta\}} \le \tilde{\tau}_1 + \tilde{\tau}_2.\]
		Thus, on the event $\cI=\{\tau_1,\dots,\tau_4 \in I\}$, since $(\tau_1,\tilde{\tau}_1)$ and $(\tau_2,\tilde{\tau}_2)$ satisfy \eqref{eq: preuve extension de VdB-K-1.},
		\begin{align*}
			\E[\min(\tilde{\tau}_1+\tilde{\tau}_2,\tilde{\tau}_3+\tilde{\tau}_4)|\cF] & \le \E[\tilde{\tau}_1+\tilde{\tau}_2|\cF] - 2\delta \P(\tilde{\tau}_3 \le \tilde{\tau}_1 - \delta | \cF) \P(\tilde{\tau}_4 \le \tilde{\tau}_2 - \delta | \cF) \\
			& \le \tau_1 + \tau_2 - 2\delta \P(\tilde{\tau}_3 \le \tilde{\tau}_1 - \delta | \cF) \P(\tilde{\tau}_4 \le \tilde{\tau}_2 - \delta | \cF) \\
			& \le 2 \sup I - 2 \delta \P (\tilde{\tau}_3 \le \tau_3 - 2 \delta \text{ and } \tilde{\tau}_1 \ge \tau_1 | \cF) \P (\tilde{\tau}_4 \le \tau_4 - 2 \delta \text{ and } \tilde{\tau}_2 \ge \tau_2 | \cF),
		\end{align*}	
		since the diameter of $I$ is lower than $\delta$ by \eqref{eq: preuve vdbk diamètre de I.}. Then, by \eqref{eq: preuve vdbk à propos du beta.}
		\begin{align*}
			\E[\min(\tilde{\tau}_1+\tilde{\tau}_2,\tilde{\tau}_3+\tilde{\tau}_4)|\cF] & \le 2 \sup I - 2 \delta \beta^4 \\
			& < 2 \inf I - \eta,
		\end{align*}
		since $2(\sup I - \inf I) \le 4 \delta_0 < 2 \delta \beta^4 - \eta$ by \eqref{eq: preuve vdbk diamètre de I.}.
		Hence, since on the event $\cI$,
		\[\min(\tau_1+\tau_2,\tau_3+\tau_4) \ge 2 \inf I,\]
		we get, on the event $\cI$, \[\E[\min(\tilde{\tau}_1+\tilde{\tau}_2,\tilde{\tau}_3+\tilde{\tau}_4)|\cF] < \min(\tau_1+\tau_2,\tau_3+\tau_4) - \eta.\]
	\end{proof}

	\begin{proof}[Proof of Lemma \ref{l: 3 preuve vdbk lemme fondamental des moitfs} in the third case.]
		Fix $I \subset (0,\infty)$ and $\eta>0$ given by Lemma \ref{l: lemme VdB-K avec le min des espérances conditionnelles.}. Define the pattern $\mathfrak{P}=(\Lambda,u^\Lambda,v^\Lambda,\cA^\Lambda)$ where $\Lambda$ is the set containing 4 vertices defined by $\Lambda = \{0,1\} \times \{0,1\} \times \prod_{j=3}^d \{0\}$, $\ulm_1=(0,\dots,0)$, $\vlm_1=(1,1,0,\dots,0)$ and $\cA^\Lambda$ is the event on which for every edge $e \in \Lambda$, $T(e) \in I$. This pattern is valid since the event $\cA^\Lambda$ has a positive probability since $\P(\tau \in I)>0$ by Lemma \ref{l: lemme VdB-K avec le min des espérances conditionnelles.}, and since on the event $\cA^\Lambda$, every edge of the pattern has a finite passage time.
		
		Now denote $e_1=\{\ulm,\ulm+\epsilon_1\}$, $e_2=\{\ulm+\epsilon_1,\vlm\}$, $e_3=\{\ulm,\ulm+\epsilon_2\}$ and $e_4=\{\ulm+\epsilon_2,\vlm\}$. Since there are only two paths in $\Pi^\mathfrak{P}$, the one taking $e_1$ and then $e_2$ and the one taking $e_3$ and then $e_4$, we get
		\begin{align*}
			\E \left[ \min_{\pi \in \Pi^\mathfrak{P}} \tilde{T}(\pi) | \cG \right] & = \E \left[ \min\left( \tilde{T}(e_1) + \tilde{T}(e_2), \tilde{T}(e_3) + \tilde{T}(e_4) \right)| \cG \right] \\
			& = \E \left[ \min\left( \tilde{T}(e_1) + \tilde{T}(e_2), \tilde{T}(e_3) + \tilde{T}(e_4) \right)| \sigma(T(e_1),T(e_2),T(e_3),T(e_4)) \right] \\
			& < \min(T(e_1)+T(e_2), T(e_3)+T(e_4)) - \eta \text{ by Lemma \ref{l: lemme VdB-K avec le min des espérances conditionnelles.},} \\
			& = \min_{\pi \in \Pi^\mathfrak{P}} T(\pi) - \eta.
		\end{align*}
	\end{proof}

	\subsection{End of the proof of Theorem \ref{Théorème VdB-K.}}\label{Sous-section vdbk fin de la preuve.}
	
	By \ref{enum: VdB-K 1.} and \eqref{eq: section 3 eq1.}, we can fix $M > 0$ such that 
	\begin{equation}
		\P (\tau \le M \text{ and } \tilde{\tau} \le M) > p_c.\label{eq: on fixe M dans la preuve de VDBK.}
	\end{equation}
	Let $\cC$ and $\tilde{\cC}$ be the clusters defined respectively as the clusters $\cC_M$ and $\tilde{\cC}_M$ in Section \ref{Sous-section résultat principal et applications.} for $M$ fixed above. Recall the definitions of $\phi$, $\tilde{\phi}$, $\mu$ and $\tilde{\mu}$ also given in Section \ref{Sous-section résultat principal et applications.}, and the convergence given at \eqref{eq: extension de VdB-K-3.}.

	%Let $\cC$ (resp. $\tilde{\cC}$) be the infinite cluster for the Bernoulli percolation $(\1_{\{T(e) \le M\}}, \, e \in \cE)$ (resp. for the Bernoulli percolation $(\1_{\{\tilde{T}(e) \le M\}}, \, e \in \cE)$) which exists and is unique a.s.
	%For any $y \in \R^d$, we define $\phi(y)$ (resp. $\tilde{\phi}(y)$) the random point of $\cC$ (resp. $\tilde{\cC}$) such that $\|y-\phi(y)\|_1$ (resp. $\|y-\tilde{\phi}(y)\|_1$) is minimal, with a deterministic rule to break ties.
	%Let $x \in \Z^d$. Recall that by Theorem 1 in \cite{CerfTheret},
	%\[\lim\limits_{n \to \infty} \frac{t(\phi(0),\phi(nx))}{n} = \mu(x) \text{ a.s. and in $L^1$, and } \lim\limits_{n \to \infty} \frac{\tilde{t}(\tilde{\phi}(0),\tilde{\phi}(nx))}{n} = \tilde{\mu}(x) \text{ a.s. and in $L^1$.}\]
	Let $x \in \Z^d \setminus \{0\}$.
	For any $n \in \N$, we define the random path $\gamma_n$ as the first geodesic in the lexicographical order from $\phi(0)$ to $\phi(nx)$ in the environment $T$. As stated in Section \ref{Sous-section Settings.}, almost surely, there exists at least one geodesic from $\phi(0)$ to $\phi(nx)$. Recall that $\cG$ is the $\sigma$-field generated by the family $(T(e))_{e\in\cE}$. Note that $\gamma_n$ is $\cG$-measurable.
	
	\begin{lemma}\label{l: lemme intégrabilité des temps tilde pour le phi pas tilde.}
		We have $\E \left[ \tilde{t} (\phi(0),\phi(x)) \right] < \infty$. 
	\end{lemma}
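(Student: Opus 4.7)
The plan is to bound $\tilde{t}(\phi(0),\phi(x))$ from above by the $\tilde{T}$-passage time of a carefully chosen path contained in $\cC_M$, and then to combine the coupling hypothesis \ref{enum: VdB-K 3.} with a classical chemical distance estimate for supercritical Bernoulli percolation. The key point is that, although $\phi(0)$ and $\phi(x)$ are defined from $T$, any path lying inside $\cC_M$ uses only edges $e$ with $T(e)\le M$, and the coupling forces $\E[\tilde{T}(e)\mid T(e)] \le T(e)\le M$ on such edges.

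First I would pick, deterministically from $T$ (e.g.\ by taking the lexicographically first shortest path), a self-avoiding path $\pi^*$ from $\phi(0)$ to $\phi(x)$ entirely contained in $\cC_M$; this exists almost surely since both endpoints belong to the unique infinite cluster of $\{e : T(e)\le M\}$. Every edge $e$ of $\pi^*$ then satisfies $T(e)\le M$, and $\pi^*$ is $\cG$-measurable. In particular one has the crude bound $\tilde{t}(\phi(0),\phi(x)) \le \tilde{T}(\pi^*)$.

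Next, I would exploit the product structure of the coupling. Since the pairs $(T(e),\tilde{T}(e))_{e\in\cE}$ are i.i.d.\ with marginal law $(\tau,\tilde{\tau})$, conditioning on $\cG$ gives
\[
\E\!\left[\tilde{T}(\pi^*)\mid \cG\right] \;=\; \sum_{e\in\pi^*}\E\!\left[\tilde{T}(e)\mid T(e)\right] \;\le\; \sum_{e\in\pi^*} T(e) \;\le\; M\,|\pi^*|_e,
\]
where the first inequality uses \eqref{eq: preuve extension de VdB-K-1.} and the last uses $T(e)\le M$ on $\pi^*$. Taking expectations, the problem is reduced to proving $\E[|\pi^*|_e]=\E[d_{\cC_M}(\phi(0),\phi(x))]<\infty$, where $d_{\cC_M}$ denotes the chemical distance inside $\cC_M$.

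The last step, which is essentially the only substantive point, is to invoke the Antal--Pisztora chemical distance estimate for supercritical Bernoulli percolation: since $\P(T(e)\le M)>p_c$, there exist constants $\rho,c_1,c_2>0$ such that, for all $u,v\in\Z^d$,
\[
\P\!\left(u,v\in\cC_M,\ d_{\cC_M}(u,v)>\rho\|u-v\|_1\right)\;\le\; c_1\,\mathrm{e}^{-c_2\|u-v\|_1}
\]
for $\|u-v\|_1$ large enough, together with the standard fact that $\|\phi(y)-y\|_1$ has exponential tails for every fixed $y\in\Z^d$ (exponential decay of the size of finite clusters in supercritical percolation). Combining these two ingredients gives $\E[d_{\cC_M}(\phi(0),\phi(x))]<\infty$, and thus the lemma. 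The only mildly delicate point is packaging the chemical distance bound with the two exponential tail estimates for $\phi(0)$ and $\phi(x)$, which is routine: conditioning on the locations of $\phi(0)$ and $\phi(x)$ and using the independence of the Antal--Pisztora event from the edges used to locate these projections.
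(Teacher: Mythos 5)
Your argument is correct, but it takes a genuinely different route from the paper's. The paper chooses for its comparison path the \emph{$T$-geodesic} $\gamma_1$ from $\phi(0)$ to $\phi(x)$: after the same conditioning step $\E[\tilde{T}(e)\mid T(e)]\le T(e)$, this makes the resulting bound collapse to $\sum_{e\in\gamma_1}T(e)=t(\phi(0),\phi(x))$, and the proof then simply cites Proposition 2 of \cite{CerfTheret}, which already asserts $\E[t(\phi(0),\phi(x))]<\infty$. You instead take a \emph{chemical-distance shortest path} $\pi^*$ inside $\cC_M$, use the crude bound $T(e)\le M$ on its edges, and reduce the problem to $\E[d_{\cC_M}(\phi(0),\phi(x))]<\infty$, which you then obtain from the Antal--Pisztora chemical distance estimate together with exponential tails for $\|\phi(y)-y\|_1$. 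This works, but it effectively re-derives a piece of Cerf--Théret's own proof of their Proposition 2 (which itself rests on Antal--Pisztora); the paper's choice of comparison path is tuned so that this black box can be invoked directly, making the argument shorter. Your route has the mild advantage of being more self-contained and of not requiring the geodesic $\gamma_1$ to exist, but the cost is the extra bookkeeping you allude to at the end (conditioning on the locations of $\phi(0),\phi(x)$ and handling the lack of independence between these events and the chemical distance), which is precisely what the paper avoids by outsourcing to \cite{CerfTheret}.
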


	\begin{proof}
		Since $\gamma_1$ is $\cG$-measurable,
		\begin{align*}
			\E \left[ \tilde{t}(\phi(0),\phi(x)) | \cG \right] & \le \E \left[ \tilde{T} (\gamma_1) | \cG \right] \\
			& = \sum_{e \in \gamma_1} \E \left[ \tilde{T}(e) | T(e) \right] \\
			& \le \sum_{e \in \gamma_1} T(e) = T(\gamma_1) = t(\phi(0),\phi(x)).
		\end{align*}
		Note that the last inequality comes from the fact that for every $e \in \cE$, $\E \left[ \tilde{T}(e) | T(e) \right] \le T(e)$.
		Then, taking expectation, it gives 
		\[\E \left[ \tilde{t}(\phi(0),\phi(x)) \right] \le \E \left[ t(\phi(0),\phi(x)) \right].\]
		Since by Proposition 2 in \cite{CerfTheret}, $\E \left[ t(\phi(0),\phi(x)) \right] < \infty$, we get $\E \left[ \tilde{t}(\phi(0),\phi(x)) \right] < \infty$.
	\end{proof}
	
	With Lemma \ref{l: lemme intégrabilité des temps tilde pour le phi pas tilde.}, we can use the Subadditive Ergodic Theorem (see for example Theorem 2.2 in \cite{50years}), and thus we get the existence of a finite constant $\overline{\mu}(x)$ such that 
	\[\lim\limits_{n \to \infty} \frac{\tilde{t}(\phi(0),\phi(nx))}{n} = \overline{\mu}(x) \text{ a.s. and in $L^1$}.\]
	
	\begin{remk}
		There are two main differences between the definitions of $\mu(x)$ and $\tilde{\mu}(x)$: 
		\begin{itemize}
			\item $\mu(x)$ is defined with the passage times in the environment $T$ although $\tilde{\mu}(x)$ is defined with the passage times in the environment $\tilde{T}$,
			\item $\mu(x)$ is defined with geodesic times between vertices obtained with $\phi$ although $\tilde{\mu}(x)$ is defined with geodesic times between vertices obtained with $\tilde{\phi}$.
		\end{itemize}
		In order to compare $\mu(x)$ and $\tilde{\mu}(x)$, it is therefore natural to introduce $\overline{\mu}(x)$, an intermediate object which has one difference with $\mu(x)$ and one difference with $\tilde{\mu}(x)$ in its definition. 
	\end{remk}

	\begin{remk}
		The proof of Theorem \ref{Théorème VdB-K.} given in this section also holds in the case originally proven by van den Berg and Kesten in \cite{VdBK}, i.e.\ when we assume that $\tau$ has a finite first moment. However, it is simpler in this case since we do not need the clusters $\cC$ and $\tilde{\cC}$ to define $\mu(x)$ and $\tilde{\mu}(x)$. In this case, for every $y \in \Z^d$, we can take $\phi(y) = \tilde{\phi}(y)=y$ and $\overline{\mu}(x) = \tilde{\mu}(x)$, and thus in the sequel, we do not need Lemma \ref{l: vdbk lemme tilde plus petit que overline.} and the proof of Lemma \ref{l: vdbk lemme nombre de motifs rencontrés assez grand.} is much simpler. 
	\end{remk}

	The following lemma is based on elementary arguments of percolation.
	
	\begin{lemma}\label{l: vdbk lemme tilde plus petit que overline.}
		We have $\tilde{\mu}(x) \le \overline{\mu}(x)$.
	\end{lemma}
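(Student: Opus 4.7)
The plan is to compare $\tilde{\mu}(x)$ and $\overline{\mu}(x)$ via a triangle inequality for $\tilde t$ and show that the resulting boundary corrections are asymptotically negligible. Starting from
\[
\tilde{t}(\tilde{\phi}(0),\tilde{\phi}(nx)) \le \tilde{t}(\tilde{\phi}(0),\phi(0)) + \tilde{t}(\phi(0),\phi(nx)) + \tilde{t}(\phi(nx),\tilde{\phi}(nx)),
\]
I would take expectations, divide by $n$, and use the translation invariance of the joint law of $(T,\tilde T)$ to identify $\E\,\tilde{t}(\phi(nx),\tilde{\phi}(nx))$ with $\E\,\tilde{t}(\phi(0),\tilde{\phi}(0))$, obtaining
\[
\frac{\E\,\tilde{t}(\tilde{\phi}(0),\tilde{\phi}(nx))}{n} \le \frac{2\,\E\,\tilde{t}(\phi(0),\tilde{\phi}(0))}{n} + \frac{\E\,\tilde{t}(\phi(0),\phi(nx))}{n}.
\]
On the left-hand side the $L^1$ convergence in \eqref{eq: extension de VdB-K-3.} yields the limit $\tilde{\mu}(x)$; on the right-hand side the $L^1$ convergence from the Subadditive Ergodic Theorem recalled just above the statement of this lemma gives that the middle term tends to $\overline{\mu}(x)$. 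Provided the constant $\E\,\tilde{t}(\phi(0),\tilde{\phi}(0))$ is finite, the first summand vanishes in the limit, which yields $\tilde{\mu}(x) \le \overline{\mu}(x)$.

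The main obstacle is therefore to establish the integrability $\E\,\tilde{t}(\phi(0),\tilde{\phi}(0)) < \infty$. Observe first that both $\phi(0)$ and $\tilde{\phi}(0)$ lie a.s.\ in the infinite cluster of edges with finite $\tilde T$-passage time, since $\{T(e)<\infty\}\subset\{\tilde T(e)<\infty\}$ a.s.\ by \eqref{eq: section 3 eq1.}, so the quantity is finite a.s. For integrability I would introduce the intermediate vertex $\tilde{\phi}(\phi(0))$, the projection of $\phi(0)$ onto $\tilde{\cC}_M$, and use the further triangle inequality
\[
\tilde{t}(\phi(0),\tilde{\phi}(0)) \le \tilde{t}(\phi(0),\tilde{\phi}(\phi(0))) + \tilde{t}(\tilde{\phi}(\phi(0)),\tilde{\phi}(0)).
\]
Conditionally on $\phi(0)=y$, the first summand is distributed as the $\tilde T$-distance from the deterministic vertex $y$ to its projection on $\tilde{\cC}_M$; by translation invariance of $\tilde T$, the fact that $\|y-\tilde{\phi}(y)\|_1$ has exponential tails (a standard consequence of $\tilde{\L}([0,M])>p_c$, \eqref{eq: extension de VdB-K-2.}), and the bound $\tilde T(e)\le M$ along a path through $\tilde{\cC}_M$, its expectation is bounded uniformly in $y$. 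For the second summand, Proposition 2 in \cite{CerfTheret} applied to $\tilde T$ gives $\E\,\tilde{t}(\tilde{\phi}(y),\tilde{\phi}(0)) \le C(\|y\|_1+1)$ for some constant $C$; combined with $\E\,\|\phi(0)\|_1<\infty$ (again from supercritical percolation on $\{T\le M\}$), integration in $y$ gives the required finiteness.

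The delicate point in this integrability step is that $\phi$ and $\tilde{\phi}$ are built from the correlated environments $T$ and $\tilde T$, so the conditioning on $\phi(0)$ and the subsequent application of Cerf--Th\'eret to $\tilde T$ must be organised to respect the joint structure; once that is handled, the rest of the argument is a bookkeeping exercise with the two triangle inequalities and the three $L^1$ convergences described above.
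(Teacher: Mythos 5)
Your plan requires taking expectations in the triangle inequality, which forces you to establish $\E\,\tilde t(\phi(0),\tilde\phi(0))<\infty$. That integrability is exactly the hard part, and your sketch of it has a genuine gap. The claim ``Conditionally on $\phi(0)=y$, the first summand is distributed as the $\tilde T$-distance from the deterministic vertex $y$ to its projection on $\tilde{\cC}_M$'' is false: the event $\{\phi(0)=y\}$ is determined by the $T$-environment, which is coupled to the $\tilde T$-environment edge by edge, so conditioning on $\phi(0)=y$ changes the law of $\tilde T$ in a nontrivial way. You flag this as a ``delicate point'' but do not resolve it. Moreover, even ignoring the conditioning issue, the bound you invoke (``$\tilde T(e)\le M$ along a path through $\tilde{\cC}_M$'') does not apply to $\tilde t(\phi(0),\tilde\phi(\phi(0)))$, because $\phi(0)$ is generally \emph{not} in $\tilde{\cC}_M$: the path from $\phi(0)$ to its $\tilde{\cC}_M$-projection must leave $\tilde{\cC}_M$ and can use edges with arbitrarily large $\tilde T$-values, so exponential tails on $\|y-\tilde\phi(y)\|_1$ control the chemical distance, not the $\tilde T$-passage time.

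The paper sidesteps all of this. It inserts the auxiliary infinite cluster $\widehat{\cC}$ of edges with $T(e)\le M$ \emph{and} $\tilde T(e)\le M$ (nonempty by \eqref{eq: on fixe M dans la preuve de VDBK.}), writes
\[
\tilde t(\tilde\phi(0),\tilde\phi(nx))\le A(0)+\tilde t(\phi(0),\phi(nx))+A(nx),\qquad A(y)=\tilde t(\phi(y),\widehat\phi(y))+\tilde t(\widehat\phi(y),\tilde\phi(y)),
\]
and only needs $A(y)$ to be a.s.\ \emph{finite}: a path from $\phi(y)$ to $\widehat\phi(y)$ inside $\cC$ has finite $\tilde T$-time by \eqref{eq: section 3 eq1.}, and a path from $\widehat\phi(y)$ to $\tilde\phi(y)$ inside $\tilde\cC$ has finite $\tilde T$-time trivially. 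Then $A(0)/n\to 0$ a.s.\ and $A(nx)/n\to 0$ in probability (by stationarity), and the conclusion follows from the convergence in probability of all three pieces. No integrability of $A$ is needed. If you want to keep your expectation-based route you would need to actually prove $\E\,\tilde t(\phi(0),\tilde\phi(0))<\infty$, and as it stands your argument for that does not go through.
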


	\begin{proof}
		Let $\widehat{\cC}$ be the infinite cluster for the Bernoulli percolation $(\1_{\{T(e) \le M \text{ and } \tilde{T}(e) \le M\}}, \, e \in \cE)$ which exists and is unique a.s.\ by \eqref{eq: on fixe M dans la preuve de VDBK.}. Note that $\widehat{\cC}$ is included in $\cC$ and in $\tilde{\cC}$. For any $y \in \R^d$, we define $\widehat{\phi}(y)$ the random point of $\widehat{\cC}$ such that $\|y-\widehat{\phi}(y)\|_1$ is minimal, with a deterministic rule to break ties. For any $n \in \Z$, we have
		\begin{align}
			\tilde{t}(\tilde{\phi}(0),\tilde{\phi}(nx)) %& \le \tilde{t}(\phi(0),\tilde{\phi}(0)) + \tilde{t}(\phi(0),\phi(nx)) + \tilde{t}(\phi(nx),\tilde{\phi}(nx)) \nonumber \\
			& \le \tilde{t}(\phi(0),\widehat{\phi}(0)) + \tilde{t}(\widehat{\phi}(0),\tilde{\phi}(0)) + \tilde{t}(\phi(0),\phi(nx)) + \tilde{t}(\phi(nx),\widehat{\phi}(nx)) + \tilde{t}(\widehat{\phi}(nx),\tilde{\phi}(nx)) \nonumber \\
			& = A(0) + \tilde{t}(\phi(0),\phi(nx)) + A(nx), \label{eq: vdbk inégalité preuve tilde plus petit que overline.}
		\end{align}
		by writing for any $y \in \Z^d$, $A(y) = \tilde{t}(\phi(y),\widehat{\phi}(y)) + \tilde{t}(\widehat{\phi}(y),\tilde{\phi}(y))$.
		
		Now, for any $y \in \Z^d$, there exists a path between $\phi(y)$ and $\widehat{\phi}(y)$ contained in $\cC$. Thus, it only takes edges with finite passage times in the environment $T$, and thus with finite passage times in the environment $\tilde{T}$ by \eqref{eq: section 3 eq1.}. Furthermore, there exists a path between $\widehat{\phi}(y)$ and $\tilde{\phi}(y)$ contained in $\tilde{\cC}$, which gives that $\tilde{t}(\widehat{\phi}(y),\tilde{\phi}(y))$ is also finite. Hence $A(y)$ is a finite random variable. It gives that \[\frac{A(0)}{n}\] converges almost surely, and thus in probability, towards $0$. Hence \[\frac{A(nx)}{n}\] also converges towards $0$ in probability. We get that 
		\[\frac{A(0) + \tilde{t}(\phi(0),\phi(nx)) + A(nx)}{n}\]
		converges towards $\overline{\mu}(x)$ in probability. Using \eqref{eq: vdbk inégalité preuve tilde plus petit que overline.}, this gives $\tilde{\mu}(x) \le \overline{\mu}(x)$.
	\end{proof}

	Let $\eta>0$ and $\mathfrak{P}$ the pattern given by Lemma \ref{l: 3 preuve vdbk lemme fondamental des moitfs}. 		
	For any path $\pi$ we denote by $\cN^\mathfrak{P}(\pi)$ the maximum number of disjoint translations of the pattern $\mathfrak{P}$ crossed by $\pi$.

	\begin{lemma}\label{l: vdbk lemme nombre de motifs rencontrés assez grand.}
		There exists a constant $c>0$ such that for any $n$ sufficiently large,
		\[\E \left[ \cN^\mathfrak{P}(\gamma_n) \right] \ge cn.\]
	\end{lemma}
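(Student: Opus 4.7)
[Proof plan for Lemma \ref{l: vdbk lemme nombre de motifs rencontrés assez grand.}]
The plan is to apply Theorem \ref{Théorème général.} to the pattern $\mathfrak{P}$ supplied by Lemma \ref{l: 3 preuve vdbk lemme fondamental des moitfs}. Since $\mathfrak{P}$ is valid, $\L$ is useful and $\L([0,\infty))>p_c$ (by \ref{enum: VdB-K 4.} and \ref{enum: VdB-K 1.}), the theorem yields constants $\alpha,\beta_1,\beta_2>0$ such that for every $y,z\in\Z^d$,
\[
\P\bigl((y,z)\in\mathfrak{C}\text{ and }\exists\text{ a geodesic }\gamma:y\to z\text{ with }N^\mathfrak{P}(\gamma)<\alpha\|y-z\|_1\bigr)\le\beta_1\mathrm{e}^{-\beta_2\|y-z\|_1},
\]
by translation invariance of the i.i.d.\ environment $T$.

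The remaining difficulty is that $\gamma_n$ does not run from $0$ to $nx$ but from the random vertices $\phi(0)$ and $\phi(nx)$. This is handled by controlling the endpoints. By \eqref{eq: on fixe M dans la preuve de VDBK.}, the Bernoulli percolation $(\1_{T(e)\le M})$ is supercritical, so by standard exponential estimates on supercritical clusters (e.g.\ the Grimmett--Marstrand type bounds used in \cite{CerfTheret}), there exist $C_1,C_2>0$ such that for all $y\in\Z^d$ and all $r\ge 1$,
\[
\P\bigl(\|\phi(y)-y\|_\infty>r\bigr)\le C_1\mathrm{e}^{-C_2 r}.
\]
Fix $\varepsilon\in(0,\|x\|_1/(4d))$ and set $\alpha_0=\alpha(1-2\varepsilon d/\|x\|_1)\|x\|_1$. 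On the complement of $\{\|\phi(0)\|_\infty>\varepsilon n\}\cup\{\|\phi(nx)-nx\|_\infty>\varepsilon n\}$ one has $\|\phi(0)-\phi(nx)\|_1\ge n\|x\|_1-2\varepsilon d n$, and since $\phi(0),\phi(nx)\in\cC\subset\cC_\infty$ one has $(\phi(0),\phi(nx))\in\mathfrak{C}$. A union bound over the $O(n^{2d})$ admissible values $(y,z)$ of $(\phi(0),\phi(nx))$ and an application of Theorem \ref{Théorème général.} to each pair yields
\[
\P\bigl(N^\mathfrak{P}(\gamma_n)<\alpha_0 n\bigr)\le 2C_1\mathrm{e}^{-C_2\varepsilon n}+(2\varepsilon n+1)^{2d}\beta_1\mathrm{e}^{-\beta_2\alpha_0 n/\alpha}\le \mathrm{e}^{-c_0 n}
\]
for some $c_0>0$ and all $n$ large enough.

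To pass from $N^\mathfrak{P}$ to $\cN^\mathfrak{P}$, observe that each translate $\theta_{-y}\Lambda$ crossed by $\gamma_n$ has diameter at most $L:=\mathrm{diam}_\infty(\Lambda)$; hence two translates $\theta_{-y_1}\Lambda$ and $\theta_{-y_2}\Lambda$ are disjoint as soon as $\|y_1-y_2\|_\infty>L$. A greedy extraction therefore produces a disjoint subfamily of size at least $N^\mathfrak{P}(\gamma_n)/(2L+1)^d$, so that
\[
\cN^\mathfrak{P}(\gamma_n)\ge (2L+1)^{-d}\,N^\mathfrak{P}(\gamma_n).
\]
Combining the two displays, for $n$ large enough,
\[
\E\bigl[\cN^\mathfrak{P}(\gamma_n)\bigr]\ge (2L+1)^{-d}\alpha_0 n\bigl(1-\mathrm{e}^{-c_0 n}\bigr)\ge c\,n,
\]
with $c:=(2L+1)^{-d}\alpha_0/2>0$, which is the desired conclusion. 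The main technical point is the union bound step: one must be careful that Theorem \ref{Théorème général.} is applied to deterministic endpoints $(y,z)$ and then summed, rather than being applied to the random pair $(\phi(0),\phi(nx))$ directly; this is legitimate thanks to the exponential concentration of $\phi$ around its argument.
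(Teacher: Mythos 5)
Your proof is correct and follows essentially the same route as the paper: apply Theorem \ref{Théorème général.} (by translation invariance) to the deterministic pairs $(y,z)$ ranging over a set of likely locations for $(\phi(0),\phi(nx))$, control the endpoints by exponential concentration of $\phi$ (the paper cites Theorem 8 in \cite{CerfTheret} for this step, which is the same content as the supercritical-cluster bound you invoke), take a union bound, and finally pass from $N^\mathfrak{P}$ to $\cN^\mathfrak{P}$ by a packing argument. Your greedy extraction merely spells out what the paper calls ``simple geometric considerations.''
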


	\begin{proof}
		Recall the definition of $N^\mathfrak{P}(\pi)$ given for any path $\pi$ at \eqref{Compteur nombre de motifs empruntés introduction.}.
		Since the pattern $\mathfrak{P}$ is valid by Lemma \ref{l: 3 preuve vdbk lemme fondamental des moitfs}, $\L$ is useful by \ref{enum: VdB-K 4.} and $\L([0,\infty))>p_c$ by \ref{enum: VdB-K 1.}, we can use Theorem \ref{Théorème général.} and thus there exist $\alpha>0$, $\beta_1>0$ and $\beta_2>0$ such that for every $n \in \N$ and for every $\displaystyle y \in B_1 \left( 0, \frac{\|nx\|_1}{4} \right)$ and $\displaystyle z \in B_1 \left( nx, \frac{\|nx\|_1}{4} \right)$,
		\begin{equation}
			\P \left((y,z) \in \mathfrak{C} \text{ and } \exists \mbox{ a geodesic $\gamma$ from $y$ to $z$ such that } N^\mathfrak{P}(\gamma) < \alpha n \right) \le \beta_1 \mathrm{e}^{-\beta_2 n}. \label{eq: preuve vdbk nombre de motifs rencontrés assez grand.}
		\end{equation}
		Denote by $\cV_n$ the set $\displaystyle B_1 \left( 0, \frac{\|nx\|_1}{4} \right) \times B_1 \left( nx, \frac{\|nx\|_1}{4} \right)$. Then, for every $n \in \N$,
		\begin{align}
			\P \left( N^\mathfrak{P}(\gamma_n) < \alpha n \right) \le \P & \left( (\phi(0),\phi(nx)) \notin \cV_n \right) \nonumber \\ & + \sum_{(y,z) \in \cV_n} \P \left( (y,z) \in \mathfrak{C} \text{ and } \exists \mbox{ a geodesic $\gamma$ from $y$ to $z$ such that } N^\mathfrak{P}(\gamma) < \alpha n  \right). \label{eq: preuve vdbk nombre de motifs rencontrés assez grand 2.}
		\end{align}
		Since $x \ne 0$, by Theorem 8 in \cite{CerfTheret}, there exist $\beta_3>0$ and $\beta_4>0$ such that for every $n \in \N$, 
		\[\P \left( (\phi(0),\phi(nx)) \notin \cV_n \right) \le \beta_3 \mathrm{e}^{-\beta_4 n}.\]
		Hence, using \eqref{eq: preuve vdbk nombre de motifs rencontrés assez grand.} and since $|\cV_n|$ is bounded by a polynomial in $n$, there exist $\beta_5>0$ and $\beta_6>0$ such that for every $n \in \N$, 
		\[\P \left( N^\mathfrak{P}(\gamma_n) < \alpha n \right) \le \beta_5 \mathrm{e}^{-\beta_6 n}.\]
		Thus, there exists $c' > 0$ such that for any $n$ sufficiently large
		\[\E \left[ N^\mathfrak{P}(\gamma_n) \right] \ge c' n.\]
		We conclude the proof by observing that simple geometric considerations provide a constant $c''>0$ such that for all path $\pi$, 
		\[\cN^\mathfrak{P}(\pi) \ge c'' N^\mathfrak{P}(\pi).\]
	\end{proof}

	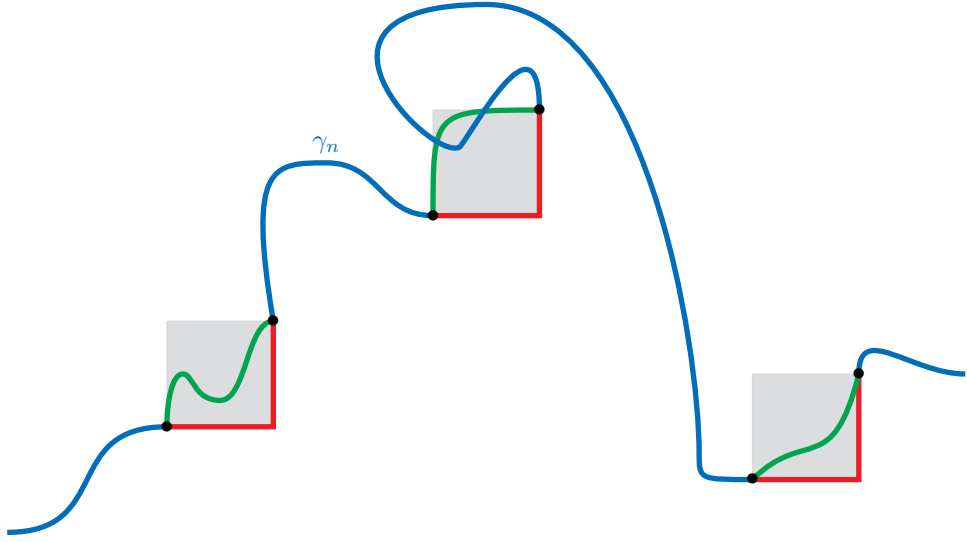
\begin{figure}
		\begin{center}
			\begin{tikzpicture}[scale=0.7]
				\draw[gray!30,fill=Gray!30] (5,5) rectangle (7,7);
				\draw[gray!30,fill=Gray!30] (10,9) rectangle (12,11);
				\draw[gray!30,fill=Gray!30] (16,4) rectangle (18,6);
				\draw[Red,line width=2pt] (5,5) -- (7,5) -- (7,7);
				\draw[Red,line width=2pt] (10,9) -- (12,9) -- (12,11);
				\draw[Red,line width=2pt] (16,4) -- (18,4) -- (18,6);
				\draw[line width=2pt,Green] (5,5) .. controls (5,5.5) and (5.1,6) .. (5.3,6) .. controls (5.5,6) and (5.5,5.5) .. (6,5.5) .. controls (6.5,5.5) and (6.5,7) .. (7,7);
				\draw[line width=2pt,Green] (10,9) .. controls (10,11) and (10,11) .. (12,11);
				\draw[line width=2pt,Green] (16,4) .. controls (17,5) and (17.5,4) .. (18,6);
				\draw[line width=2pt,NavyBlue] (2,3) .. controls (4,3) and (3,5) .. (5,5);
				\draw[line width=2pt,NavyBlue] (7,7) .. controls (6.5,10) and (7,10) .. (8,10) .. controls (9,10) and (9,9) .. (10,9);
				\draw[line width=2pt,NavyBlue] (12,11) .. controls (12,13) and (10.7,10.5) .. (10.5,10.3) .. controls (10,10) and (7,13) .. (11,13) .. controls (14,13) and (15,6.5)  .. (15,4.5) .. controls (15,4) and (15,4) .. (16,4);
				\draw[line width=2pt,NavyBlue] (18,6) .. controls (18,7) and (19,6) .. (20,6);
				\draw (5,5) node {$\bullet$};
				\draw (7,7) node {$\bullet$};
				\draw (10,9) node {$\bullet$};
				\draw (12,11) node {$\bullet$};
				\draw (16,4) node {$\bullet$};
				\draw (18,6) node {$\bullet$};
				\draw (8,10) node[NavyBlue,above] {$\gamma_n$};
				%\draw[gray,fill=white] (1.5,1.5) rectangle (8.5,8.5);
				%\draw[Red,line width=2pt] (0.5,9.5) -- (9.5,9.5);
				%\draw[LimeGreen,line width=2pt] (0,5) -- (1.5,5);
				%\draw[LimeGreen,line width=2pt] (8.5,5) -- (10,5);
				%\draw (8.5,1.5) node[above left,scale=0.8] {$B_\infty(s,\lll-3)$};
				%\draw (10,0) node[above left,scale=0.8] {$B_\infty(s,\lll)$};
				%\draw (5,5) node {$\bullet$};
				%\draw (5,5) node[below right] {$s$};	
			\end{tikzpicture}
			\caption{Illustration of the proof of Lemma \ref{l: vdbk lemme overline plus petit que sans rien.}. The translations of the pattern $\mathfrak{P}$ are in gray. The path $\gamma_n$ is the concatenation of the blue parts and of the red parts. In each pattern, the green part corresponds to the optimal path in the environment $\tilde{T}$ among the paths between the endpoints of the pattern and entirely contained in the pattern. The idea is to bound from above the geodesic time between $\phi(0)$ and $\phi(nx)$ in the environment $\tilde{T}$ by the passage time in the environment $\tilde{T}$ of the concatenation of the blue parts and the green parts.}\label{f: Preuve VDBK.}
		\end{center}
	\end{figure}

	\begin{lemma}\label{l: vdbk lemme overline plus petit que sans rien.}
		We have $\overline{\mu} (x) < \mu(x)$.
	\end{lemma}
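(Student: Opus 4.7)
The plan is to compare $\overline{\mu}(x)$ with $\mu(x)$ by constructing, for each large $n$, a random path $\tilde{\gamma}_n$ from $\phi(0)$ to $\phi(nx)$ whose expected $\tilde{T}$-passage time is visibly smaller than $\E[t(\phi(0),\phi(nx))]$. The two ingredients are the $cn$ disjoint translates of $\mathfrak{P}$ crossed by the selected $T$-geodesic $\gamma_n$ (Lemma \ref{l: vdbk lemme nombre de motifs rencontrés assez grand.}) and the strict local improvement of $\tilde{T}$ over $T$ inside the pattern (Lemma \ref{l: 3 preuve vdbk lemme fondamental des moitfs}).

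Fix $\mathfrak{P}$, $\eta>0$ and $c>0$ from those two lemmas. For each $n$, select (measurably with respect to $\cG$) a maximal family $x_1,\dots,x_{K_n}$ of vertices such that the translates $\Lambda+x_1,\dots,\Lambda+x_{K_n}$ are pairwise disjoint and $\gamma_n$ takes the pattern at each $x_i$, so that $K_n=\cN^\mathfrak{P}(\gamma_n)$. Since $\gamma_n$ is a $T$-geodesic, its subpath through $\Lambda+x_i$ between $u^\Lambda+x_i$ and $v^\Lambda+x_i$ realizes $\min_{\pi\in\Pi^\mathfrak{P}}(\theta_{x_i}T)(\pi)$. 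I build $\tilde{\gamma}_n$ by replacing, for each $i$, that subpath by one realizing $\min_{\pi\in\Pi^\mathfrak{P}}(\theta_{x_i}\tilde{T})(\pi)$ (which is finite by validity of $\mathfrak{P}$ together with \eqref{eq: section 3 eq1.}), and keeping the other edges of $\gamma_n$ unchanged; disjointness of the translates ensures that $\tilde{\gamma}_n$ is well-defined and goes from $\phi(0)$ to $\phi(nx)$ (see Figure \ref{f: Preuve VDBK.}).

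Conditioning on $\cG$, the edges outside $\bigcup_i(\Lambda+x_i)$ contribute at most $T(\gamma_n)-\sum_i T(\gamma_n|_{\Lambda+x_i})$, because of the pointwise bound $\E[\tilde{T}(e)\,|\,\cG]=\E[\tilde{T}(e)\,|\,T(e)]\le T(e)$. For each $i$, the translated form of Lemma \ref{l: 3 preuve vdbk lemme fondamental des moitfs} gives, on the event $\{\theta_{x_i}T\in\cA^\Lambda\}$ which holds by the choice of $x_i$,
\[
\E\left[\min_{\pi\in\Pi^\mathfrak{P}}(\theta_{x_i}\tilde{T})(\pi)\,\Big|\,\cG\right]<\min_{\pi\in\Pi^\mathfrak{P}}(\theta_{x_i}T)(\pi)-\eta.
\]
Summing these bounds and using $\tilde{t}(\phi(0),\phi(nx))\le\tilde{T}(\tilde{\gamma}_n)$ yields
\[
\E[\tilde{t}(\phi(0),\phi(nx))]\le\E[t(\phi(0),\phi(nx))]-\eta\,\E[\cN^\mathfrak{P}(\gamma_n)]\le\E[t(\phi(0),\phi(nx))]-\eta c n
\]
for $n$ large by Lemma \ref{l: vdbk lemme nombre de motifs rencontrés assez grand.}. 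Dividing by $n$ and passing to the limit gives $\overline{\mu}(x)\le\mu(x)-\eta c<\mu(x)$.

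The main delicate point is the interplay between the $\cG$-measurable selection of the $x_i$'s and the additivity of the local improvement when conditioning on $\cG$: one must check that the translated version of Lemma \ref{l: 3 preuve vdbk lemme fondamental des moitfs} applies \emph{simultaneously} at all $K_n$ disjoint translates, which follows from the fact that disjoint translates involve disjoint families of edges and the pair $(T,\tilde{T})$ has independent coordinates edge-by-edge. Once this is written out cleanly, and the identity $\E[\tilde{T}(e)\,|\,\cG]=\E[\tilde{T}(e)\,|\,T(e)]$ is invoked for the remaining edges, the argument closes without further difficulty.
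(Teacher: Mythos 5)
Your proposal is correct and follows the same route as the paper: replace the selected $T$-geodesic's subpaths through a maximal family of disjoint pattern translates by $\tilde{T}$-optimal subpaths, use $\E[\tilde{T}(e)\,|\,\cG]\le T(e)$ on the remaining edges and the strict $\eta$-gain of Lemma~\ref{l: 3 preuve vdbk lemme fondamental des moitfs} inside each translate, then sum and invoke Lemma~\ref{l: vdbk lemme nombre de motifs rencontrés assez grand.}. The path you call $\tilde{\gamma}_n$ is precisely the minimizer over the paper's set $\Gamma^\mathfrak{P}(\gamma_n)$, and your observation that the conditional bound applies simultaneously at all translates because they involve disjoint edge sets is exactly what the identity~\eqref{eq: vdbk fin de preuve.} encodes.
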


	\begin{proof}
		For any $n \ge 1$, denote by $\cS^\mathfrak{P}(\gamma_n)$ the set, chosen according to a deterministic rule if there are several such sets, of $\cN^\mathfrak{P}(\gamma_n)$ disjoint translations of $\mathfrak{P}$ crossed by $\gamma_n$.
		Denote by $\cE^\mathfrak{P}(\gamma_n)$ the set of all edges of $\gamma_n$ which are not in a subpath of $\gamma_n$ between the endpoints of a pattern of $\cS^\mathfrak{P}(\gamma_n)$. 
		Recall that we denote by $\Pi^\mathfrak{P}$ the set of all self-avoiding paths going from $\ulm$ to $\vlm$ and which are contained in $\Lambda$.
		For a pattern $\mathfrak{P}' \in \cS^\mathfrak{P}(\gamma_n)$, we can associate a unique $s \in \Z^d$ such that $s$ satisfies the condition $(\gamma_n;\mathfrak{P})$ and such that $\mathfrak{P}'$ is located at $\theta_s \Lambda$. Then, we denote by $\Pi^{\mathfrak{P}'}$ the set of all self-avoiding paths $\pi$ such that $\theta_{-s} \pi \in \Pi^\mathfrak{P}$.
		Denote by $\Gamma^\mathfrak{P}(\gamma_n)$ the set of all paths from $\phi(0)$ to $\phi(nx)$ following $\gamma_n$ outside all the patterns of $\cS^\mathfrak{P}(\gamma_n)$ and following a path of $\Pi^{\mathfrak{P}'}$ for every pattern $\mathfrak{P}'$ of $\cS^\mathfrak{P}(\gamma_n)$. 
		With these definitions, we immediately get that 
		\begin{equation}
			\min_{\pi \in \Gamma^\mathfrak{P}(\gamma_n)} \tilde{T}(\pi) = \sum_{e \in \cE^\mathfrak{P}(\gamma_n)} \tilde{T}(e) + \sum_{\mathfrak{P}' \in \cS^\mathfrak{P}(\gamma_n)} \min_{\pi' \in \Pi^{\mathfrak{P}'}} \tilde{T}(\pi').\label{eq: vdbk fin de preuve.}
		\end{equation}
		
		Let $n$ be sufficiently large such that Lemma \ref{l: vdbk lemme nombre de motifs rencontrés assez grand.} holds and let $c>0$ be the constant given by this lemma.
		Recall that $\cG$ is the $\sigma$-field generated by the family $(T(e))_{e\in\cE}$. Then, $\gamma_n$, $\cS^\mathfrak{P}(\gamma_n)$ and $\cE^\mathfrak{P}(\gamma_n)$ are $\cG$-measurable and we get 
		\begin{align*}
			\E \left[ \tilde{t}(\phi(0),\phi(nx)) | \cG \right] & \le \E \left[ \min_{\pi \in \Gamma^\mathfrak{P}(\gamma_n)} \tilde{T}(\pi) | \cG \right] \text{ by the definition of the geodesic time in the environment $\tilde{T}$,} \\
			& = \sum_{e \in \cE^\mathfrak{P}(\gamma_n)} \E \left[ \tilde{T}(e) | T(e) \right] + \sum_{\mathfrak{P}' \in \cS^\mathfrak{P}(\gamma_n)} \E \left[ \min_{\pi' \in \Pi^{\mathfrak{P}'}} \tilde{T}(\pi') | \cG \right] \text{ by \eqref{eq: vdbk fin de preuve.},} \\
			& \le \sum_{e \in \cE^\mathfrak{P}(\gamma_n)} T(e) + \sum_{\mathfrak{P}' \in \cS^\mathfrak{P}(\gamma_n)} \left( \min_{\pi' \in \Pi^{\mathfrak{P}'}} T(\pi') - \eta \right).
		\end{align*}
		For the first sum, the last inequality comes from the fact that for every edge $e \in \cE$, $(T(e),\tilde{T}(e))$ has the same distribution as $(\tau,\tilde{\tau})$ and thus satisfies $E[\tilde{T}(e)|T(e)] \le T(e)$. For the second sum, it comes from Lemma \ref{l: 3 preuve vdbk lemme fondamental des moitfs}.
		
		Then, by the definitions of $\cE^\mathfrak{P}(\gamma_n)$ and $\cS^\mathfrak{P}(\gamma_n)$, \[\sum_{e \in \cE^\mathfrak{P}(\gamma_n)} T(e) + \sum_{\mathfrak{P}' \in \cS^\mathfrak{P}(\gamma_n)} \min_{\pi' \in \Pi^{\mathfrak{P}'}} T(\pi') = T(\gamma_n) = t(\phi(0),\phi(nx)).\]
		Furthermore, recall that $\cN^\mathfrak{P}(\gamma_n)$ is the number of elements of $\Pi^{\mathfrak{P}'}$. Thus, we get 
		\[\E \left[ \tilde{t}(\phi(0),\phi(nx)) | \cG \right] \le t(\phi(0),\phi(nx)) - \eta \cN^\mathfrak{P}(\gamma_n).\]
		Now, taking expectation and dividing by $n$ gives
		\[\frac{\E \left[ \tilde{t}(\phi(0),\phi(nx)) \right]}{n} \le \frac{\E \left[ t(\phi(0),\phi(nx)) \right]}{n} - \eta \frac{\E \left[ \cN^\mathfrak{P}(\gamma_n) \right]}{n} \le \frac{\E \left[ t(\phi(0),\phi(nx)) \right]}{n} - \eta c,\]
		by Lemma \ref{l: vdbk lemme nombre de motifs rencontrés assez grand.}. 
		We conclude the proof using that 
		\[\lim\limits_{n \to \infty} \frac{\E \left[ \tilde{t}(\phi(0),\phi(nx)) \right]}{n} = \overline{\mu}(x) \text{ and } \lim\limits_{n \to \infty} \frac{\E \left[ t(\phi(0),\phi(nx)) \right]}{n} = \mu(x).\]
	\end{proof}

	Now, we conclude the proof of Theorem \ref{Théorème VdB-K.} by combining Lemma \ref{l: vdbk lemme tilde plus petit que overline.} and Lemma \ref{l: vdbk lemme overline plus petit que sans rien.}.

	\appendix
	
	\section{Existence of geodesics}\label{Annexe sur l'existence des géodésiques avec des arêtes infinies.}
	
	\begin{prop}\label{prop: Proposition annexe existence des géodésiques.}
		Assume that $\L(0) < p_c$. With probability one, for all $x$, $y$ such that $(x,y) \in \mathfrak{C}$, there exists a geodesic between $x$ and $y$. 
	\end{prop}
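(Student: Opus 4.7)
My plan is to reduce everything to a finiteness statement. Since $\Z^d\times\Z^d$ is countable, it suffices to fix a pair $(x,y)$ and show that almost surely on $\{(x,y)\in\mathfrak{C}\}$ a geodesic from $x$ to $y$ exists. Moreover, deleting loops from any path only decreases its (non-negative) passage time, so $t(x,y)$ equals the infimum of $T(\pi)$ over \emph{self-avoiding} paths from $x$ to $y$. I therefore aim to show that, almost surely on $\{(x,y)\in\mathfrak{C}\}$, only finitely many self-avoiding paths $\pi$ from $x$ to $y$ satisfy $T(\pi)\le K$, where $K:=t(x,y)+1$; the minimum is then attained on a finite set.

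The argument splits on whether $\r>0$ or $\r=0$. If $\r>0$, every path satisfies $T(\pi)\ge\r|\pi|$, which bounds $|\pi|\le K/\r$ and immediately yields only finitely many self-avoiding candidates. If $\r=0$, the assumption $\L(0)<p_c$ together with right-continuity of $\tau\mapsto\L([0,\tau])$ yields some $\tau>0$ with $\L([0,\tau])<p_c$. Since $\L$ is then useful with $\r=0$, Lemma \ref{l: Lemme qui dit qu'il y a assez d'arêtes non nulles.} applies and provides constants $\rho,\beta,\beta'>0$ such that for all $v,w\in\Z^d$,
\[
\P(\exists\text{ a self-avoiding path from }v\text{ to }w\text{ with at most }\rho\|v-w\|_1\text{ edges of passage time }>\tau)\le\beta'\mathrm{e}^{-\beta\|v-w\|_1}.
\]
Summing over $w\in\Z^d$ gives a finite total, so Borel--Cantelli furnishes a (random) $N<\infty$ such that for every $w$ with $\|w-x\|_1\ge N$, every self-avoiding path from $x$ to $w$ takes more than $\rho\|w-x\|_1$ edges of passage time $>\tau$.

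The final step is to observe that any self-avoiding path $\pi$ from $x$ to $y$ with $T(\pi)\le K$ is confined to a bounded region. Indeed, for each vertex $z$ visited by $\pi$ the subpath $\pi_{x,z}$ is self-avoiding with $T(\pi_{x,z})\le K$, so either $\|z-x\|_1<N$, or $K\ge T(\pi_{x,z})>\tau\rho\|z-x\|_1$; in either case $\|z-x\|_1\le\max(N,K/(\tau\rho))=:R$. Hence $\pi$ lies in the finite ball $B_1(x,R)$ and $|\pi|<|B_1(x,R)|_v$, so only finitely many such self-avoiding paths from $x$ to $y$ exist and $t(x,y)$ is attained. The main technical work is already packaged in Lemma \ref{l: Lemme qui dit qu'il y a assez d'arêtes non nulles.}; the only care needed is to check that the random radius $N$ produced by Borel--Cantelli depends only on the fixed vertex $x$ (not on the path or its endpoint $y$), which is why the uniform-in-$(v,w)$ form of the lemma is important.
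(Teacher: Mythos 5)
Your proof is correct and follows essentially the same strategy as the paper: an exponential estimate derived from Kesten's Proposition (5.8), combined with Borel--Cantelli, confines every path of passage time at most $t(x,y)+1$ to a finite ball around $x$, so the infimum is over a finite set of self-avoiding paths and is attained. The only (harmless) difference is that the paper proves a dedicated lemma bounding the passage time of paths from $x$ in terms of their number of edges, which handles $\r>0$ and $\r=0$ uniformly, whereas you reuse Lemma~\ref{l: Lemme qui dit qu'il y a assez d'arêtes non nulles.} (stated in terms of $\|v-w\|_1$ and only for $\r=0$) and dispatch the case $\r>0$ separately by the trivial bound $T(\pi)\ge\r|\pi|$.
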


	To prove the above proposition, we begin by the following lemma. 
	
	\begin{lemma}\label{l: Lemme annexe existence des géodésiques.}
		Assume that $\L(0) < p_c$. There exists $\beta>0$, $\beta' > 0$ and $\rho > 0$ such that for all $n \ge 1$,
		\[\P(\exists \text{ a self-avoiding path $\pi$ from $0$ which contains at least $n$ edges but has } T(\pi) < \rho n) \le \beta' \mathrm{e}^{-\beta n}.\]
	\end{lemma}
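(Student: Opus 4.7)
The plan is to reduce the lemma to exponential decay estimates for subcritical Bernoulli bond percolation. Since $\L(0)<p_c$ and the distribution function of $\L$ is right-continuous at $0$, one can pick $\tau>0$ with $p:=\L([0,\tau])<p_c$. Call an edge $e$ \emph{slow} if $T(e)\le\tau$ and \emph{fast} otherwise. The slow edges then form Bernoulli bond percolation at density $p<p_c$, so by the sharp threshold theorem (Menshikov; see Theorem~5.4 in \cite{Grimmett}) there exist constants $c_1,c_2>0$ such that, writing $C^{\mathrm{slow}}(v)$ for the slow cluster of $v$, one has $\P\bigl(|C^{\mathrm{slow}}(v)|\ge m\bigr)\le c_1 e^{-c_2 m}$ for every $v\in\Z^d$ and every $m\ge 1$.

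If $\pi$ is a self-avoiding path from $0$ with $|\pi|\ge n$ and $T(\pi)<\rho n$, its first $n$ edges form a SAW of length exactly $n$ whose passage time is still strictly less than $\rho n$, so we may assume $|\pi|=n$. Every fast edge contributes more than $\tau$ to $T(\pi)$, hence $\pi$ carries at most $\rho n/\tau$ fast edges and at least $n(1-\rho/\tau)$ slow edges. Cutting $\pi$ at its fast edges produces at most $\rho n/\tau+1$ maximal slow runs $\sigma_1,\dots,\sigma_k$, of total length at least $n(1-\rho/\tau)$, each of which is a self-avoiding slow path starting at some vertex $u_i$; in particular $|C^{\mathrm{slow}}(u_i)|\ge\ell_i+1$.

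The main obstacle is to convert this structural information into an exponential bound in $n$. The naive union bound over SAWs of length $n$ is not sufficient because the connective constant $\mu_{\mathrm{SAW}}$ may satisfy $p\,\mu_{\mathrm{SAW}}\ge 1$ while still $p<p_c$, so one has to exploit the geometric independence of the slow clusters visited along $\pi$. I would follow the block-renormalization argument of Kesten (see \cite{SaintFlourKesten}, and the very close Lemma~2.20 of \cite{50years}): fix $K$ large, partition $\Z^d$ into cubes of side $K$, and declare a cube \emph{good} if every slow cluster meeting it has diameter at most $K/3$. The exponential tail of the first paragraph makes the probability of a cube being \emph{bad} arbitrarily small as $K\to\infty$, and then a standard stochastic domination (Liggett--Schonmann--Stacey, cf.\ \cite{Grimmett}) lets the good cubes dominate a very supercritical site percolation. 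A SAW of length $n$ crosses of the order of $n/K$ cubes, and on every good-cube crossing it must switch from one slow cluster to another and therefore spend at least one fast edge inside the cube. A Peierls-type sum over admissible configurations of bad cubes then gives that any SAW of length $n$ from $0$ carries at least $c'n/K$ fast edges, hence has $T(\pi)\ge c'\tau n/K$, outside an event of probability at most $\beta' e^{-\beta n}$. Choosing $\rho<c'\tau/K$ contradicts $T(\pi)<\rho n$ on this favourable event, which yields the lemma; calibrating $K$ simultaneously against the cluster tail and the combinatorial entropy of self-avoiding paths is the delicate quantitative step.
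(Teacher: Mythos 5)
Your proposal is correct in strategy but takes a much longer route than the paper. The paper's proof is a two-line reduction: define $\tilde{T}(e)=T(e)$ if $T(e)<\infty$ and $\tilde{T}(e)=1$ otherwise, note that $\P(\tilde{T}(e)=0)=\L(0)<p_c$, apply Proposition (5.8) of \cite{SaintFlourKesten} (which is verbatim the statement of the lemma for finite passage times) to $\tilde{T}$, and use $\tilde{T}\le T$ to transfer the bound back to $T$. The truncation is only there because Kesten's proposition is stated for $[0,\infty)$-valued times; your argument sidesteps it automatically, since a fast edge (one with $T(e)>\tau$, including $T(e)=\infty$) contributes more than $\tau$ to any path of finite passage time. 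What you have written is essentially a sketch of the \emph{proof} of Kesten's Proposition (5.8) rather than an application of it: the subcritical slow clusters, the block renormalization with good cubes, the Liggett--Schonmann--Stacey domination, and the Peierls sum against the entropy of self-avoiding walks are exactly the ingredients of that proof. That is a legitimate alternative if one wants a self-contained argument, but as it stands the decisive quantitative step — showing that, outside an event of probability $\beta'\mathrm{e}^{-\beta n}$, \emph{every} self-avoiding path of length $n$ from $0$ meets at least $c'n$ fast edges, which requires beating the connective constant with the coarse-grained union bound and being careful that a length-$n$ path need only visit of order $n/K^d$ (not $n/K$) distinct cubes — is acknowledged but not carried out. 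Since this is precisely the content of the result the paper cites, the honest conclusion is that your proof is complete only modulo that citation, at which point the paper's shorter reduction is preferable.
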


	\begin{proof}[Proof of Lemma \ref{l: Lemme annexe existence des géodésiques.}]
		%If $\r > 0$, it is sufficient to take $\rho=\r$. Now, assume that $\r=0$. Since $\L(0) < p_c$, we can take $\tau>0$ such that $\L([0,\tau])<p_c$. 
		For each environment $T$, we define a new environment $\tilde{T}$ defined for all edges $e$ by 
		\[\tilde{T}(e) = \left\{
		\begin{array}{ll}
			T(e) & \mbox{if } T(e) < \infty \\
			1 & \mbox{else.}
		\end{array}
		\right.\]
		Since $\L(0)<p_c$, we have $\P(\tilde{T}(e)=0) = \P(T(e) = 0) < p_c$. Thus, we can use Proposition (5.8) in \cite{SaintFlourKesten} and we get $\beta > 0$, $\beta' > 0$ and $\rho > 0$ such that for all $n \ge 1$, for all $z \in \Z^d$,
		\[\P(\exists \text{ a self-avoiding path $\pi$ from $z$ which contains at least $n$ edges but has } \tilde{T}(\pi) < \rho n) \le \beta' \mathrm{e}^{-\beta n}.\]
		Now, for every edge $e$, $\tilde{T}(e) \le T(e)$. Therefore, 
		\begin{equation*}
			\begin{split}
				\P(\exists \text{ a self-avoiding path $\pi$ from $0$ which contains at least $n$ edges but has } T(\pi) < \rho n) \\ 
				\le \P(\exists \text{ a self-avoiding path $\pi$ from $z$ which contains at least $n$ edges but has } \tilde{T}(\pi) < \rho n),
			\end{split}
		\end{equation*}
		which allows us to conclude.
	\end{proof}

	\begin{proof}[Proof of Proposition \ref{prop: Proposition annexe existence des géodésiques.}]
		It is sufficient to prove that for every $x$ and $y$, with probability one, there exists a geodesic between $x$ and $y$ if $(x,y) \in \mathfrak{C}$. Fix $x$ and $y$ in $\Z^d$. 
		Fix $\beta$, $\beta'$ and $\rho$ given by Lemma \ref{l: Lemme annexe existence des géodésiques.}. For every $n \ge 1$, denote by $A_n$ the event on which  every path $\pi$ from $x$ which contains at least $n$ edges has $T(\pi) \ge \rho n$.
		By the Borel-Cantelli Lemma and by Lemma \ref{l: Lemme annexe existence des géodésiques.}, with probability one, for all $n$ large enough, $A_n$ occurs. We work on this probability one event. Assume that $(x,y) \in \mathfrak{C}$.
		Let $\pi_{x,y}$ be a path between $x$ and $y$ such that $T(\pi_{x,y})<\infty$. Fix $n$ large enough such that $A_n$ occurs and 
		\begin{equation}
			n > \frac{T(\pi_{x,y})}{\rho}.\label{eq: Annexe existence des géodésiques preuve de la proposition.}
		\end{equation}
		Then, every path from $x$ to the boundary of $B_1(x,n)$ has a passage time greater than or equal to $\rho n$ since the event $A_n$ occurs, and thus a passage time strictly greater than $T(\pi_{x,y})$ by \eqref{eq: Annexe existence des géodésiques preuve de la proposition.}.
		Hence, the infimum in the definition of $t(x,y)$ is over the finite set of paths contained in $B_1(x,n)$, and there must be a geodesic between $x$ and $y$.
	\end{proof}
	
	\section{Edges with positive passage times taken by self-avoiding paths}\label{Annexe sur le lemme qui découle de Kesten Saint-Flour (5.8).}
	
	\begin{proof}[Proof of Lemma \ref{l: Lemme qui dit qu'il y a assez d'arêtes non nulles.}]
		Assume that $\L$ is useful and that $\r=0$. Let $\tau>0$ such that $\L([0,\tau])<p_c$. For each environment $T$, we define a new environment $\tilde{T}$ defined for all edges $e$ by 
		\[\tilde{T}(e) = \left\{
		\begin{array}{ll}
			0 & \mbox{if } T(e) \le \tau \\
			1 & \mbox{else.}
		\end{array}
		\right.\]
		We have $\P(\tilde{T}(e)=0) = \P(T(e) \le \tau) < p_c$.
		By Proposition (5.8) in \cite{SaintFlourKesten}, we get $\beta > 0$, $\beta' > 0$ and $\rho > 0$ such that for all $n \ge 1$,
		\[\P(\exists \text{ a self-avoiding path $\pi$ from $0$ which contains at least $n$ edges but has } \tilde{T}(\pi) < \rho n) \le \beta' \mathrm{e}^{-\beta n}.\]
		Thus, 
		\begin{equation*}
			\begin{split}
				\P(\exists \text{ a self-avoiding path $\pi$ from $0$ which contains at least $n$ edges but containes at most} \\ \rho n \text{ edges $e$ such that $T(e) > \tau$}) \le \beta' \mathrm{e}^{-\beta n},
			\end{split}
		\end{equation*}
		and we get \eqref{eq: Equation du lemme qui dit qu'il y a assez d'arêtes non nulles.} for all $v,w \in \Z^d$.
	\end{proof}
	
	\section{Overlapping patterns}\label{Annexe sur les surmotifs.}
	
		\begin{proof}[Proof of Lemma \ref{Lemme pour dire qu'on ne perd pas de généralité avec les restrictions sur le motifs.}]
		Let $\mathfrak{P}_0=(\Lambda_0,\ulm_0,\vlm_0,\cA^\Lambda_0)$ be a valid pattern. Denote by $L_1,\dots,L_d$ the integers such that $\displaystyle \Lambda_0=\prod_{i=1}^d \{0,\dots,L_i\}$. Fix 
		\begin{equation}
			\lll=\max(L_1,\dots,L_d)+4. \label{eq: hypothèse sur le motif 1.}
		\end{equation} 
		Let $M^\Lambda_0>0$ such that 
		\begin{equation}
			\P(\cA^\Lambda_0 \cap \{\forall e \in \Lambda_0, \, T(e) \le M^\Lambda_0 \text{ or } T(e)=\infty\})>0. \label{eq: lemme hypotèse sur le motif.}
		\end{equation}
		
		\textbf{In the case \ref{c: Case I.}.}
		Consider the pattern $\mathfrak{P}=(\Lambda,\ulm,\vlm,\cA^\Lambda)$ defined as follows:
		\begin{itemize}
			\item $\Lambda=B_\infty(0,\lll)$.
			\item $\ulm = - \lll \epsilon_1$ and $\vlm = \lll \epsilon_1$.
			\item Let $\pi_\infty$ be a path from $\ulm$ to $\vlm$ such that:
			\begin{itemize}
				\item $\pi_\infty$ is a self-avoiding path.
				\item In $\Lambda \setminus B_\infty(0,\lll-3)$, $\pi_\infty$ uses only 6 vertices, all in the set $\{k\epsilon_1, \, -\lll \le k \le \lll\}$,
				\item $\pi_\infty$ visits $\ulm_0$ and $\vlm_0$, and the portion of $\pi_\infty$ between these two vertices, denoted by $\pi_{\infty,0}$ is entirely contained in $\Lambda_0$. Furthermore, when $\cA^\Lambda_0$ occurs, $T(\pi_{\infty,0}) < \infty$. Note that this is possible since $\mathfrak{P}_0$ is valid.
				\item $\pi_\infty \setminus \pi_{\infty,0}$ does not take any edge of $\Lambda_0$.
			\end{itemize}
			Then, $\cA^\Lambda$ is the event such that:
			\begin{itemize}
				\item $\cA^\Lambda_0 \cap \{\forall e \in \Lambda_0, T(e) \le M^\Lambda_0 \text{ or } T(e)=\infty\}$ occurs,
				\item for all $e$ belonging to $\cS_{s,\lll}$, $T(e) < \infty$,
				\item for all $e$ belonging to $\pi_\infty \setminus \pi_{\infty,0}$, $T(e) \le M^\Lambda_0$,
				\item for all $e$ which does not belong to $\Lambda_0 \cup \pi_\infty$, $T(e) = \infty$. 
			\end{itemize}
		\end{itemize}
		We get that $\mathfrak{P}$ satisfies \ref{OP1}, \ref{OP2} and \ref{OP3}. Then, since $\pi_\infty$ takes only edges whose passage time is smaller than or equal to $M^\Lambda_0$, $\mathfrak{P}$ satisfies \ref{OP7} by taking $T^\Lambda > |B_\infty(0,\lll)|_e M^\Lambda_0$. 
		
		Furthermore, we have $\cA^\Lambda \subset \cA^\Lambda_0$ by the definition of $\cA^\Lambda$, $\Lambda_0 \subset \Lambda$ by \eqref{eq: hypothèse sur le motif 1.}, $\cA^\Lambda$ has a positive probability by \eqref{eq: lemme hypotèse sur le motif.} and when $\cA^\Lambda$ occurs, every path from $\ulm$ to $\vlm$ whose passage time is finite is equal to $\pi_\infty \setminus \pi_{\infty,0}$ outside $\Lambda_0$, visits $\ulm_0$ and $\vlm_0$ and is entirely contained in $\Lambda_0$. 
		
		\textbf{In the case \ref{c: Case III.}.} 
		Let $M^\Lambda> |B_\infty(0,\lll)|_e M^\Lambda_0 + 1$ such that 
		\begin{equation}
			\L((M^\Lambda-1,M^\Lambda))>0. \label{eq: lemme hypothèse sur le motif 2.}
		\end{equation}
		Consider the pattern $\mathfrak{P}=(\Lambda,u^\Lambda,v^\Lambda,\cA^\Lambda)$ defined as follows:
		\begin{itemize}
			\item $\Lambda=B_\infty(0,\lll)$.
			\item $\ulm = - \lll \epsilon_1$ and $\vlm = \lll \epsilon_1$.
			\item Let $\pi_f$ be a path from $\ulm$ to $\vlm$ such that:
			\begin{itemize}
				\item $\pi_f$ is a self-avoiding path.
				\item $\pi_f$ does not visit any vertex in $\partial \Lambda$ except $\ulm$ and $\vlm$.
				\item $\pi_f$ visits $\ulm_0$ and $\vlm_0$ and the portion of $\pi_f$ between these two vertices, denoted by $\pi_{f,0}$ is entirely contained in $\Lambda_0$.
				\item $\pi_f \setminus \pi_{f,0}$ does not take any edge of $\Lambda_0$.
			\end{itemize}
			Then $\cA^\Lambda$ is the event such that:
			\begin{itemize}
				\item $\cA^\Lambda_0 \cap \{\forall e \in \Lambda_0, \, T(e) \le M^\Lambda_0\}$ occurs,
				\item for all $e$ belonging to $\pi_f \setminus \pi_{f,0}$, we have $T(e) \le M^\Lambda_0$,
				\item for all $e$ which does not belong to $\partial \Lambda \cup \Lambda_0 \cup \pi_f$, $T(e) \in (M^\Lambda-1,M^\Lambda)$,
				\item for all $e \in \partial \Lambda$, $T(e) \ge M^\Lambda$.
			\end{itemize}
		\end{itemize}
		We get that $\mathfrak{P}$ satisfies \ref{OP4}, \ref{OP8}, \ref{OP5} and \ref{OP6}. Furthermore, 
		\begin{itemize}
			\item $\Lambda_0 \subset \Lambda$ by \eqref{eq: hypothèse sur le motif 1.}.
			\item $\P(\cA^\Lambda)$ is positive by \eqref{eq: lemme hypotèse sur le motif.} and \eqref{eq: lemme hypothèse sur le motif 2.}, and then $\mathfrak{P}$ is a valid pattern.
			\item On $\cA^\Lambda$, any path from $\ulm$ to $\vlm$ optimal for the passage time among the paths entirely inside $\Lambda$ contains a subpath from $\ulm_0$ to $\vlm_0$ entirely inside $\Lambda_0$. Indeed, let $\pi$ be a path from $\ulm$ to $\vlm$ which does not contain a subpath from $\ulm$ to $\vlm$ entirely inside $\Lambda_0$. Since $\pi_f$ is a self-avoiding path, it implies that $\pi$ takes an edge whose time is greater than $M^\Lambda - 1 > |\Lambda|_e M^\Lambda_0$. But we have $T(\pi_f) \le |\Lambda|_e M^\Lambda_0 < T(\pi)$ and thus $\pi$ is not an optimal path. Hence, for every optimal path $\pi$, if a vertex $x \in \Z^d$ satisfies the condition $(\pi;\mathfrak{P}_0)$, $x$ satisfies the condition $(\pi;\mathfrak{P})$. 
			\item $\cA^\Lambda \subset \cA^\Lambda_0$ by the definition of $\cA^\Lambda$.
		\end{itemize}
	\end{proof}

\end{document}